\documentclass[11pt]{amsart}
\usepackage[T1]{fontenc}
\usepackage{lmodern,microtype,amsmath,amssymb,amsthm,mathtools,enumitem}
\usepackage[margin=1in]{geometry}
\usepackage{tabularx}
\usepackage[hidelinks]{hyperref}
\newtheorem{theorem}{Theorem}[section]
\newtheorem{proposition}[theorem]{Proposition}
\newtheorem{lemma}[theorem]{Lemma}
\newtheorem{corollary}[theorem]{Corollary}
\theoremstyle{remark}

\newcommand{\Prob}{\mathbb P}
\newcommand{\E}{\mathbb E}
\newcommand{\one}{\mathbf 1}
\newcommand{\Id}{\mathrm{Id}}
\newcommand{\cS}{\mathcal S}
\newcommand{\cP}{\mathcal P}
\newcommand{\cI}{\mathcal I}
\newcommand{\TV}{d_{\mathrm{TV}}}
\newcommand{\astar}{a_*}

\DeclareMathOperator{\Var}{Var}

\DeclareMathOperator{\dist}{dist}
\hypersetup{pdftitle={Entropy and singularity of spectral and pivotal sets in planar percolation},pdfauthor={Yitzchak Shmalo}}
\title[Entropy and singularity in percolation]{Entropy and singularity of spectral and pivotal sets in planar percolation}
\author{Yitzchak Shmalo}
\address{Einstein Institute of Mathematics, The Hebrew University of Jerusalem, Jerusalem, Israel}
\date{}
\subjclass[2020]{60K35, 42C10, 60G57}
\keywords{Planar percolation, Fourier entropy, pivotal set, spectral sample, singularity}
\setlist{itemsep=3pt,topsep=5pt}
\begin{document}
\begin{abstract}
The spectral sample and pivotal set of a percolation crossing have the same one- and two-coordinate inclusion probabilities under the uniform product measure. We prove that both Shannon entropies are comparable to total influence for critical square-lattice bond and triangular-lattice site crossings, with matching estimates at every retained nonroot spatial resolution. The triangular entropy bounds are uniform over a class of inhomogeneous near-critical product measures. For critical triangular-site square crossings, the two laws are asymptotically mutually singular: an elementary triangular face is forbidden in the pivotal set and occurs with positive density in a large spectral sample. Every fixed face density below an explicit threshold has nonempty spectral probability comparable to $R^2\alpha_4(R)^2$, while the nonempty overlap of the two laws has decay exponent $11/12$. Their minimum expected symmetric-difference distance over all couplings is comparable to total influence. Geometric separation persists after independent thinning whenever the retention probability $\rho_R$ satisfies $\rho_R^3R^2\alpha_4(R)\to\infty$; the face detector fails when this quantity tends to zero. The proofs combine spatial encoding, arm estimates, local Fourier cancellation, and an exact six-cycle coercivity inequality preserved under exterior conditioning.
\end{abstract}
\maketitle
\enlargethispage{4pt}
\section{Introduction}

The spectral sample and the pivotal set are two random subsets associated with a Boolean crossing function. At the uniform product measure they have the same one- and two-coordinate inclusion probabilities \cite[equation~(2.13)]{GPS}. We prove that their Shannon entropies are also comparable to the same influence scale, with matching comparisons at each retained nonroot spatial resolution. For critical triangular-lattice square crossings, we further prove that their laws are asymptotically mutually singular. The distinction is detected by elementary triangular faces, which occur in a large spectral sample and are forbidden in a pivotal set.

The entropy estimate answers the square-crossing question in \cite[Section~11, Question~1]{GPS}. Its pivotal counterpart gives the other square-crossing bound in \cite[Conjecture~8]{Kalai}. The singularity theorem answers the discrete triangular-site case of \cite[Section~11, Question~5]{GPS}. Both questions appeared in the 2008 preprint of \cite{GPS}, published in 2010. The spectral entropy bound is a percolation instance of the Fourier entropy--influence conjecture of Friedgut and Kalai \cite{FK}; no assertion for arbitrary Boolean functions is made here. Likewise, the singularity result concerns discrete laws. It does not establish mutual singularity of their continuum limits or the square-lattice bond analogue.

\subsection{The random sets and their entropy}

Let $f:\{0,1\}^V\to\{-1,1\}$ be an increasing Boolean function under independent coordinate parameters $p_i\in(0,1)$. Use the normalized product characters
\[
 \chi_A(X)=\prod_{i\in A}\frac{X_i-p_i}{\sqrt{p_i(1-p_i)}},
 \qquad \widehat f(A)=\E[f\chi_A].
\]
The spectral sample $\cS_f$ has law
\begin{equation}\label{eq:spectral}
 \Prob(\cS_f=A)=\widehat f(A)^2.
\end{equation}
It has total mass one by Parseval. The pivotal set $\cP_f$ consists of the coordinates whose reversal changes $f$. Write
\[
 \iota_i(f)=\Prob(i\in\cP_f),\qquad
 I(f)=\E|\cP_f|,\qquad
 J(f)=\E|\cS_f|=\sum_i4p_i(1-p_i)\iota_i(f).
\]
The last identity is proved in \eqref{eq:biasedinfluence}. At the uniform measure, $I(f)=J(f)$ and
\begin{equation}\label{eq:jointmarginals}
 \Prob(i\in\cS_f)=\Prob(i\in\cP_f),\qquad
 \Prob(i,j\in\cS_f)=\Prob(i,j\in\cP_f)\quad(i\ne j).
\end{equation}
The two-coordinate identity is recalled in Lemma~\ref{lem:biasedpair}. For fair coordinates we also write $\omega_i=2X_i-1$, so $\chi_A=\prod_{i\in A}\omega_i$.

Shannon entropy $H$ and binary entropy $h_2$ are measured in bits. We write $\log_2$ for base-two logarithms and $\ln$ for natural logarithms. Let $Q$ be a rectangle with horizontal and vertical sides, larger side $R$, and aspect ratio at most a fixed $\kappa$. Its crossing function is
\[
 f=2\one_{\{Q\text{ has an open left--right crossing}\}}-1.
\]
We use the geometric tile convention of \cite[Section~2.1]{GPS}; coordinates are the random tiles meeting $Q$. Fix an admissible microscopic inner radius $r_0$ and put
\[
 m(t)=t^2\alpha_4(t),\qquad \alpha_4(t)=\alpha_4(r_0,t),
\]
where $\alpha_4$ is the critical alternating four-arm probability of the model. On the triangular lattice, $\alpha_4(R)=R^{-5/4+o(1)}$ \cite{SW}.

\begin{theorem}[Crossing entropy]\label{thm:main}
Fix $0<p_0\le1/2$, $\varepsilon\in(0,1/2)$, and $\kappa\ge1$.
Consider either of the following models, in the embeddings of
Section~\ref{sec:inputs}:
\begin{enumerate}[label=(\roman*)]
\item critical bond percolation on the square lattice;
\item triangular-lattice site percolation with independent parameters
\[
 p_x\in[p_*,1-p_*],\qquad p_*\in[p_0,1/2],\qquad
 R\le L_\varepsilon(p_*).
\]
Here $L_\varepsilon$ is the characteristic length
of~\cite[Section~3.1]{Nolin}, with $L_\varepsilon(1/2)=\infty$.
\end{enumerate}
There are constants $c,C,R_0>0$, depending only on these fixed
parameters and the lattice conventions, such that for every such
rectangle with $R\ge R_0$,
\begin{equation}\label{eq:mainentropy}
 cI(f)\le H(\cP_f)\le CI(f),\qquad
 cJ(f)\le H(\cS_f)\le CJ(f),\qquad
 I(f)\asymp J(f)\asymp m(R).
\end{equation}
The constants are uniform in the rectangle's location and the
admissible parameter profile.
\end{theorem}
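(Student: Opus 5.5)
We split \eqref{eq:mainentropy} into three parts: the size comparison $I(f)\asymp J(f)\asymp m(R)$, the lower entropy bounds, and the upper entropy bounds.

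\emph{Size comparison.} For the critical models, $\E|\cP_f|\asymp m(R)$ is the standard computation. A tile at distance $d$ from $\partial Q$ is pivotal with probability comparable to $\alpha_4(d)$ times boundary (half-plane or corner) arm factors. Summing with quasi-multiplicativity gives $R^2\alpha_4(R)$. For the triangular model with $R\le L_\varepsilon(p_*)$, Nolin's near-critical stability \cite{Nolin} keeps arm events comparable to their critical values uniformly over parameter profiles, so the same sum applies. Since $p_x\in[p_*,1-p_*]$, the identity $J(f)=\sum_i4p_i(1-p_i)\iota_i(f)$ gives $J\asymp I$ with constants depending only on $p_0$.

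\emph{Lower bounds.} We use $H(X)\ge$ (mass near the typical size) $\times$ $\log$(number of typical configurations), through a spatial encoding at mesoscopic scale $r$. Partition $Q$ into $r$-boxes and let $N_r$ be the number of boxes that $\cP_f$ (resp.\ $\cS_f$) meets. For $\cS_f$ at the uniform measure, the GPS annulus structure gives $\E N_r\asymp (R/r)^2\alpha_4(r,R)$ and a second moment bound $\E N_r^2\le C(\E N_r)^2$. Paley--Zygmund then gives $N_r\asymp (R/r)^2\alpha_4(r,R)$ with positive probability. For the pivotal set, \eqref{eq:jointmarginals} gives the same first two moments; in the inhomogeneous case we use the two-point arm estimates directly. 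We then take $r$ to be a fixed large constant $r_0$. On that event the set occupies order $m(R)$ boxes, and within each occupied box there is at least a fixed amount of conditional entropy. For $\cP_f$ this comes from local modification: the event that a given tile is pivotal, conditioned on the exterior of a small box, has both values with probability bounded below. For $\cS_f$, it follows from the GPS ``clue'' lemma that, conditionally on $\cS_f\cap B\neq\emptyset$ and the outside, $\cS_f\cap B$ is not deterministic. Chaining conditional entropies over boxes gives $H\ge c\,m(R)$. The main technical point is to make this local non-degeneracy hold uniformly after conditioning on all other boxes. For the spectral sample we would use the GPS conditional law $\Prob(\cS\cap B\ne\emptyset\mid \cS\cap W=\emptyset)$ together with the Fourier decomposition of $f$ restricted to $B$.

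\emph{Upper bounds.} This is the main obstacle. We use a hierarchical encoding at dyadic scales $2^k r_0$, $k=0,\dots,\log_2 R$. At scale $k$ we record which $2^k$-boxes are occupied, given the occupied parent boxes. The cost is at most $\sum_{B}h_2\bigl(\Prob(\text{child occupied}\mid\text{parent})\bigr)$ plus the local description at the bottom scale, which costs $O(1)$ bits per occupied $r_0$-box. We bound the expected number of occupied $2^k$-boxes by $C(R/2^k)^2\alpha_4(2^k,R)$. For $\cS_f$ this uses the GPS first-moment bound together with the $\ge\alpha_4$ conditional estimates; for $\cP_f$ it uses arm estimates directly. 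Each child given an occupied parent contributes $O(1)$ bits, so
\[
H\le C\sum_k (R/2^k)^2\alpha_4(2^k,R)\le C R^2\alpha_4(R)\sum_k 2^{-2k}\alpha_4(2^k)^{-1}.
\]
The four-arm exponent is strictly less than $2$ (quasi-multiplicativity gives $\alpha_4(r,R)\ge c(r/R)^{2-\delta}$), so the series converges geometrically and $H\le C\,m(R)$. The delicate inputs are the boundary boxes, handled by the half-plane three-arm bounds, and uniformity in the near-critical profile, handled by Nolin's estimates. The finest scales are the only place where the per-box cost is not trivially bounded, and there the per-box cost is bounded by the constant entropy of a subset of an $r_0$-box.
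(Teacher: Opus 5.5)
Your size comparison and upper bound essentially follow the paper. The upper bound encodes occupied dyadic cells parent-before-child at $O(1)$ bits per occupied cell (Lemma~\ref{cor:forestcost}), bounds $\E N_s\le C(R/s)^2\alpha_4(s,R)$ using $\Prob(\cS_f\cap B\ne\varnothing)=\E\Var(f\mid X_{B^c})\le\Prob(\Lambda_B)$, and sums over scales using $\alpha_4(u,t)\ge c(u/t)^{2-\eta}$. One input is thinner than you suggest. You need half-plane three arms with exponent $>1$, uniform in the direction of a Euclidean side, under the inhomogeneous triangular measures; this is not a direct quotation of Nolin. The paper derives it by central inversion from the full-plane six-arm bound (Lemma~\ref{lem:reflection}).

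The genuine gap is in the lower bounds, at exactly the step you call the main technical point. Entropy is not bounded below by the mass of a typical event times the logarithm of the number of typical configurations. The valid inequality is $H(A)\ge\Prob(A\in\mathcal E)\min_{a\in\mathcal E}\log_2(1/\Prob(A=a))$, so you need upper bounds on individual atoms. Your chain rule $H(A)=\sum_iH(A\cap B_i\mid A\cap B_1,\dots,A\cap B_{i-1})$ would supply this only if each $A\cap B_i$ were uniformly non-degenerate given the \emph{previously revealed pieces of $A$}.

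For $\cP_f$ you instead condition on the configuration outside $B_i$. But $\cP_f\cap B_j$ for $j<i$ depends on the configuration inside $B_i$, so this non-degeneracy does not transfer. For a function of independent blocks, such local conditional entropies can sum to far more than the total entropy: let every block record the global parity. For $\cS_f$ there is no spatial Markov property. The GPS conditional estimates condition on events such as $\{\cS_f\cap W=\varnothing\}$, not on the full exterior support. No available lemma makes $\cS_f\cap B$ non-degenerate given $\cS_f\setminus B$, and Paley--Zygmund does not address this.

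The missing idea is an exponential bound on single atoms, which needs no conditioning at all.

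For pivotals, monotonicity forces $X_i=(f+1)/2$ for every $i\in\cP_f$. Hence $\Prob(\cP_f=S,f=1)\le\prod_{i\in S}p_i$ and $\Prob(\cP_f=S,f=-1)\le\prod_{i\in S}(1-p_i)$. Since $\{i\in\cP_f\}$ is independent of $X_i$, averaging logarithms gives $H(\cP_f)+H(f\mid\cP_f)\ge\sum_ih_2(p_i)\iota_i$, so $H(\cP_f)\ge h_2(p_0)I(f)-1$ (Theorem~\ref{thm:pivlower}).

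For the spectral sample, use guards. Closing the neighbouring coordinates of $i$ is an event $E_i$ of probability at least $p_0^{D}$, with $D\le6$, on which $f$ does not depend on $X_i$. Choose $T\subseteq S$ with $|T|\ge|S|/K$ and pairwise disjoint guard neighbourhoods. Expanding the product shows $\widehat f(S)=\E[f\chi_S\prod_{i\in T}(1-\one_{E_i})]$. Under the tilted product measure $|\chi_S|\,d\Prob/\E|\chi_S|$, which is fair on $S$, the guards remain independent. This gives $\widehat f(S)^2\le\prod_{j\in S}4p_j(1-p_j)\,(1-p_0^D)^{2|S|/K}$ (Theorem~\ref{thm:guards}). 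Averaging $-\log_2\widehat f(S)^2$ yields $H(\cS_f)\ge c\sum_j\Prob(j\in\cS_f)=cJ(f)$. Together with $I\asymp J\asymp m(R)$, this gives both lower bounds, with the constant $1$ absorbed for large $R$.
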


To measure spatial entropy, project the tile centers coordinatewise
onto $Q$ and repeatedly bisect both sides of the rectangle. For
$s=R2^{-k}$, let $Z_s(A)$ record the depth-$k$ cells occupied by a set
$A$ of coordinates. The boundary convention and a fixed lattice
cutoff $s_*$ are specified in Section~\ref{sec:upper}. Scales
$s\ge s_*$ are called retained scales.

\begin{theorem}[Entropy at each resolution]\label{thm:scales}
Under the hypotheses of Theorem~\ref{thm:main}, for every retained
nonroot dyadic scale $s_*\le s=R2^{-k}\le R/2$ and
$A\in\{\cS_f,\cP_f\}$,
\begin{equation}\label{eq:maincoarse}
 H(Z_s(A))\asymp\E|Z_s(A)|\asymp\frac{m(R)}{m(s)}.
\end{equation}
At the root, $H(Z_R(\cP_f))\asymp1$, whereas
\begin{equation}\label{eq:rootexact}
 H(Z_R(\cS_f))=h_2\bigl((\E f)^2\bigr).
\end{equation}
For any deterministic offset of an axis-parallel square grid, the
entropy upper bound also holds at every $s_*\le s\le R$, using the
same projected centers.
\end{theorem}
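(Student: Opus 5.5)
We prove the three estimates in \eqref{eq:maincoarse} separately: a lower bound on the expected count, an upper bound on it, and then entropy bounds on both sides. The root formula \eqref{eq:rootexact} is handled directly.

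\emph{Root.} At $s=R$ there is a single cell, so $Z_R(A)$ only records whether $A$ is empty. For the spectral sample, $\Prob(\cS_f=\emptyset)=\widehat f(\emptyset)^2=(\E f)^2$, which gives \eqref{eq:rootexact}. For the pivotal set, $\Prob(\cP_f\neq\emptyset)$ is the probability of a pivotal tile. By RSW and the quasi-multiplicativity of four-arm events (inputs of Section~\ref{sec:inputs}), valid uniformly in the near-critical class $R\le L_\varepsilon(p_*)$, this probability is bounded away from $0$ and $1$. Hence $H(Z_R(\cP_f))\asymp1$.

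\emph{First moments.} Write $\E|Z_s(A)|=\sum_B\Prob(A\cap B\neq\emptyset)$, the sum running over depth-$k$ cells $B$.

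For $\cP_f$, a cell $B$ at distance $d$ from $\partial Q$ contains a pivotal only if there are four arms from scale $s$ to scale $\min(d,R)$, together with the appropriate boundary (half-plane or corner) arms outside that. A second-moment argument over the tiles of $B$ shows the reverse comparison: conditionally on the four-arm event around $B$, some tile inside is pivotal with probability bounded below. This gives $\Prob(\cP_f\cap B\neq\emptyset)\asymp \alpha_4(s,R)$ for bulk cells. Boundary cells carry exponents strictly larger than~$2$ and~$1$ in the half-plane and corner cases, so their total contribution is of lower order. Summing over cells and using quasi-multiplicativity, $\alpha_4(s,R)\asymp \alpha_4(R)/\alpha_4(s)$, we get
\[
\E|Z_s(\cP_f)|\asymp (R/s)^2\alpha_4(R)/\alpha_4(s)=m(R)/m(s).
\]

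For $\cS_f$ we use the spectral cell estimate of \cite{GPS}: $\Prob(\cS_f\cap B\neq\emptyset)\asymp\Prob(\cP_f\cap B\neq\emptyset)$. The upper bound comes from $\Prob(\cS_f\cap B\ne\emptyset)\le\E|\cS_f\cap B|$ and conditional restriction, and the lower bound from the second moment \eqref{eq:jointmarginals}, which extends to the biased case through Lemma~\ref{lem:biasedpair}. In the inhomogeneous case I would verify that the GPS bounds use only arm estimates, which \cite{Nolin} provides uniformly below $L_\varepsilon$.

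\emph{Entropy upper bound.} We encode $Z_s(A)$ hierarchically along the dyadic tree from the root down to scale $s$. At each occupied cell of side $t$, we record which of its four children are occupied, at a cost of $O(1)$ bits per occupied cell. This gives
\[
H(Z_s(A))\le O(1)+C\sum_{t=s}^{R}\E|Z_t(A)|\lesssim\sum_{t\ge s}m(R)/m(t).
\]
The key input is the polynomial growth $m(t)/m(s)\ge c(t/s)^{\delta}$ for some $\delta>0$. This holds because $\alpha_4(s,t)\ge c(s/t)^{2-\delta}$, which is the standard consequence of the five-arm exponent being~$2$ together with the four-arm exponent being strictly smaller. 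With it the geometric sum is dominated by its $t=s$ term, and we obtain $H\lesssim m(R)/m(s)$. For an arbitrary offset of the grid, each offset cell meets at most four dyadic cells, so the same count applies; at $s=R$ the bound reads $H\lesssim1$.

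\emph{Entropy lower bound (main obstacle).} We need $H(Z_s(A))\gtrsim\E|Z_s(A)|$ for nonroot $s$. The plan is to find $\gtrsim m(R)/m(s)$ cells whose occupations are close to independent, each occupied with probability bounded away from $0$ and $1$, in the following sense. Partition the scale-$2s$ cells into well-separated blocks. By the chain rule,
\[
H(Z_s)\ge\sum_{\text{blocks}}H(Z_s\cap\text{block}\mid\text{outside data}).
\]
For the pivotal set, conditionally on the configuration outside an annulus around a block with four arms arriving at it, the pattern of occupied children inside has conditional entropy bounded below. By RSW, two children can each be made occupied or empty with positive probability. The expected number of blocks where the four-arm event reaches them is $\asymp m(R)/m(2s)$.

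For the spectral sample, conditioning on the exterior must be replaced by the GPS device of conditioning on $\cS_f\cap W=S'$ with $W$ the block's exterior. The restricted law inside is then again a spectral sample of a conditional function, so that the ``positive conditional probability of a nonempty child but empty sibling'' estimates of \cite[Section~5]{GPS} (the clustering and very-small-sample bounds) apply. Making these conditional bounds uniform in the inhomogeneous near-critical setting, and obtaining positive conditional entropy rather than just a positive occupation probability, is where I expect the difficulty to lie. First I would try to show that, given $\cS_f\cap B\ne\emptyset$, both "exactly one child occupied" and "two children occupied" have conditional probability bounded below. This is a second-moment argument within $B$, and it would give the required $\Omega(1)$ bits per occupied block.
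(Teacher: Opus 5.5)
The genuine gap is the entropy lower bound $H(Z_s(A))\gtrsim\E|Z_s(A)|$. You leave it as a plan, and the plan does not work as sketched.

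Your upper-bound half follows the paper's route: child masks down the dyadic tree, cell occupancy bounded by arm events, and summability from $\alpha_4(s,t)\ge c(s/t)^{2-\eta}$. It needs two repairs.
\begin{itemize}
\item The union bound $\Prob(\cS_f\cap B\ne\varnothing)\le\E|\cS_f\cap B|\asymp s^2\alpha_4(R)$ overshoots $\alpha_4(s,R)$ by the factor $m(s)$. You need the block-variance identity $\Prob(\cS_f\cap B\ne\varnothing)=\E\Var(f\mid X_{B^c})\le\Prob(\Lambda_B)$ of Lemma~\ref{lem:blockvariance}.
\item For offset grids, the encoding needs each dyadic cell of side at most $r$ to meet at most four target cells, not the converse.
\end{itemize}

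Why the lower-bound plan fails:
\begin{itemize}
\item The inequality $H(Z_s)\ge\sum_bH(Z_s\cap b\mid\text{outside data})$ holds when the data for $b$ is the occupied pattern in the other blocks. It fails when the data is the percolation configuration outside $b$, since patterns elsewhere are not functions of that configuration and carry information about the inside of $b$. For example, if two blocks both record $\xi_1\oplus\xi_2$, with $\xi_i$ the configuration of block $i$, the right side is $2$ and the left side is $1$.
\item With the valid conditioning, you would need a two-sided bound: uniformly over all exterior data and all inhomogeneous profiles, an occupied block has each child both occupied and empty with conditional probability bounded below.
\item Given $\cS_f\cap B^c=S'$, the law of $\cS_f\cap B$ is the spectral law of the non-Boolean function $X_B\mapsto\E[f\chi_{S'}\mid X_B]$. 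Arm and RSW arguments do not directly apply to it.
\item The GPS conditional estimates you cite are one-sided hitting bounds under particular exterior conditioning. They have no ``empty child'' counterpart.
\item Any block scheme also degenerates at the boundedly many nonroot scales $s\asymp R$, e.g.\ $s=R/2$.
\end{itemize}

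The missing idea is to bound atoms instead of conditional entropies.
\begin{itemize}
\item For a central cell $C$ at scale $s\le R/(32\kappa)$, take as guard a closed circuit in the annulus of radii $s$ and $2s$ about its center. It has probability at least $q>0$ by uniform RSW, avoids $J_C$, and makes $f$ independent of $X_{J_C}$.
\item Take disjoint blocks $B_i$ with disjointly supported guard events $E_i$, each measurable off every block. Put $D=\prod_i(\Id-\E_{B_i})$ and $M=\prod_i(1-\one_{E_i})$. Every term of $f-fM$ is independent of some $X_{B_i}$, so $Df=D(fM)$.
\item Hence $\Prob(\cS_f\cap B_i\ne\varnothing\ \forall i)=\|Df\|_2^2\le\|fM\|_2^2=\Prob(\bigcap_iE_i^c)\le(1-q)^k$. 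For $\cP_f$ the all-hit event lies in $\bigcap_iE_i^c$ directly (Lemma~\ref{lem:blockguards}).
\item Color the central cells with $b_\kappa^2$ colors so that same-colored enlarged boxes are disjoint. Then every atom of the central pattern satisfies $\Prob(W_s=w)\le(1-q)^{|w|/b_\kappa^2}$, hence $H(Z_s)\ge H(W_s)\ge c\,\E|W_s|$.
\item No conditional spectral estimate or two-sided child bound is needed. Only RSW enters, which is why the argument is uniform over the near-critical profiles.
\item Your Cauchy--Schwarz step then gives $\E|W_s|\gtrsim m(R)/m(s)$.
\end{itemize}

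For the remaining coarse nonroot scales, a single bit suffices. Let $Y$ be the occupation of a fixed depth-one cell. Then $\Prob(Y=1)\ge a_0$ by the second moment. Also $\Prob(Y=1)\le1-b_0$, because an open crossing in a strip disjoint from that cell makes $f$ constant in it. So $H(Z_s)\ge H(Y)\ge c$.
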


Thus, away from the root, entropy is comparable to the expected number
of occupied cells. For standard critical square crossings,
$\E f_R\to0$~\cite[Section~1.3]{GPS}, so the spectral root entropy tends
to zero. Formula~\eqref{eq:rootexact} records this dependence on the
crossing probability.

\subsection{Local faces and singularity}

For the singularity statements, let $R$ be a positive integer and let $f_R$ be the critical triangular-site crossing sign of $[0,R]^2$. Write $\mu_R$ and $\nu_R$ for the laws of $\cS_R$ and $\cP_R$, and set
\[
 \epsilon_R=(\E f_R)^2=\mu_R(\varnothing).
\]
Let $V_R$ be the sites whose tiles meet the square, and let $K_R(A)$ count the elementary triangular faces contained in $A$ whose three tiles lie wholly inside the square. Let $V_R^\circ$ be the sites whose full seven-site wheel lies a fixed positive lattice distance inside the square, and put $X_R=|\cS_R\cap V_R^\circ|$. Every tested center is nonterminal and has all six ring neighbors. The fixed boundary collar may be enlarged without affecting the asymptotic conclusions.

\begin{theorem}[Discrete singularity]\label{thm:discrete}
For critical triangular-site square crossings,
\[
 \nu_R(K_R>0)=0,\qquad \mu_R(K_R>0)\longrightarrow1,
 \qquad \TV(\mu_R,\nu_R)\longrightarrow1,
\]
where total variation is normalized to lie in $[0,1]$.
\end{theorem}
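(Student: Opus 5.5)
The proof splits into a deterministic statement about the pivotal set and a probabilistic statement about the spectral sample. The total variation claim is then immediate: the event $\{K_R>0\}$ has $\nu_R$-probability $0$ and $\mu_R$-probability tending to one, so
\[
\TV(\mu_R,\nu_R)\ge\mu_R(K_R>0)-\nu_R(K_R>0)\to1 .
\]

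\emph{Pivotal side ($\nu_R(K_R>0)=0$).} I will show that for every configuration no three mutually adjacent tiles $x,y,z$ are simultaneously pivotal for the left--right crossing of $[0,R]^2$.

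1. Pivotality of $x$ means that the hexagon of $x$ has four alternating arms: two open arms reaching the left and right sides, and two closed arms reaching the top and bottom sides. These arms start from four disjoint arcs of the six-site ring of $x$, in cyclic order open, closed, open, closed.
2. The sites $y,z$ are consecutive on this ring. So either they lie in a common monochromatic arc, or they lie in two adjacent arcs of opposite colours.
3. In the second case, say $y$ is open and $z$ is closed. Pivotality of $y$ then needs a closed arm from the ring of $y$. The sites $x$ and $z$ are common ring neighbours of $y$. Using planarity (the open arm through $y$ separates the face), I will show that any closed path from the ring of $y$ to the top or bottom must pass through $z$'s closed cluster on the wrong side of $x$'s open arm. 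This would give an open crossing avoiding $y$, contradicting pivotality of $y$.
4. In the first case, say $y$ and $z$ are both open. Then $y$ and $z$ are adjacent open sites, so flipping $y$ does not disconnect $z$ from $x$'s open arm through the other path. Again $y$ fails to be pivotal.

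Since the tiles lie wholly inside the square, boundary effects do not interfere. This is a finite planar case analysis and carries no analytic difficulty.

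\emph{Spectral side ($\mu_R(K_R>0)\to1$).}

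1. Since $\epsilon_R=(\E f_R)^2\to0$ \cite[Section~1.3]{GPS}, it suffices to work on $\{\cS_R\ne\varnothing\}$.
2. Fix a mesoscopic scale $r$, with $r$ large but fixed, or growing slowly. By Theorem~\ref{thm:scales} and the GPS estimates, conditionally on nonemptiness the number of $r$-boxes in $V_R^\circ$ hit by $\cS_R$ tends to infinity in probability. This follows from the large-deviation lemma for $|Z_r(\cS_R)|$ in \cite{GPS}, whose inputs are the first- and second-moment bounds $\E|Z_r|\asymp m(R)/m(r)$ and the quasi-multiplicativity of $\alpha_4$.
3. The key local input: there is $c>0$ such that, for every interior $r$-box $B$ and every admissible exterior information $W$ (the spectral sample outside a slightly enlarged box together with the relevant $\sigma$-field),
\[
\Prob\bigl(\cS_R\cap B \text{ contains a face}\bigm| \cS_R\cap B\ne\varnothing,\ W\bigr)\ge c .
\]
To get this I will use the GPS representation $\Prob(\cS\cap B=\varnothing,\ \cS\cap W'=\ldots)$ via $\E[\E[f\mid\mathcal F_{B^c}]^2]$. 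I will then compute the Fourier mass that the conditional function $g=f(\cdot\mid\omega_{B^c})$ places on sets containing a face. Locally, $g$ is determined by which of the four arm-endpoint arcs are connected. The claim is that its degree-$\ge3$ coefficients on the $7$-site wheel around a pivotal centre are uniformly comparable to its total local energy. This is exactly the content of the six-cycle coercivity inequality mentioned in the abstract: an exact inequality bounding $\sum_{A\supset\text{face}}\widehat g(A)^2$ below by a constant times $\sum_{A\ne\varnothing}\widehat g(A)^2$ for the induced Boolean function on the six-cycle boundary patterns. Because the exterior is averaged against nonnegative weights, this inequality is preserved under exterior conditioning.
4. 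Finally, I will sequentially reveal the hit boxes, using the conditional lower bound $c$ each time. The probability that none of $N$ hit boxes contains a face is then at most $(1-c)^N$. Since $N\to\infty$ in probability, $\mu_R(K_R>0)\to1$.

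\emph{Main obstacle.} I expect the main obstacle to be step~3: establishing the coercivity inequality uniformly over all exterior conditionings, and then transferring it through the GPS box-conditioning framework so that the lower bound really holds conditionally on the revealed part of the spectral sample. I would first try brute-force enumeration of the boundary connection patterns of the hexagonal wheel, followed by a convexity argument to pass to mixtures.
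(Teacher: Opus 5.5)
Your architecture matches the paper's: a triangle prohibition for pivotals, a local face estimate for $\cS_R$ that survives conditioning on exterior spectral information, sequential amplification over disjoint local regions, escape of the spectral size, and $\epsilon_R\to0$. But Step~3 has a genuine gap, exactly where you flagged it. The mechanism you propose cannot give the conditional bound: you say the exterior is ``averaged against nonnegative weights'', so you would enumerate Boolean connection patterns of the wheel and pass to mixtures by convexity. Conditioning on an exterior spectral event $\mathcal H$ means that the joint probability of a local event and $\mathcal H$ is the squared norm of a local Fourier projection of $Af$. Here $A$ is the diagonal Walsh projector onto the exterior supports in $\mathcal H$. The wheel fibers of $Af$ are linear combinations of the fibers of $f$. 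For the hit events and face-free events you condition on in Step~4, the coefficients are signed. Vacancy events $\{\cS\cap U=\varnothing\}$ would give the positive operator $\E[\,\cdot\mid\mathcal F_{U^c}]$, but your events are not of that form. Genuine mixtures would not help either, because $\{g:\|Tg\|_2^2\ge c\|Pg\|_2^2\}$ is a nonconvex cone. Both $u\pm t$ satisfy the bound when $t\in\operatorname{Ran}T$ is large relative to $u\in\operatorname{Ran}(P-T)\setminus\{0\}$, but their average $u$ does not. So a quadratic bound checked on each Boolean fiber is not inherited by combinations of fibers. Integrating the fiberwise bound over the exterior yields only the unconditioned inequality.

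The missing idea is to transport a linear identity rather than an inequality. For every exterior configuration, $\partial_vf$ vanishes whenever the open ring neighbours of $v$ form a single cyclic interval, or none, or all: a connection through an open centre can then be rerouted along the open ring arc. This holds pointwise in the exterior coordinates. Hence $\partial_v(Af)$ vanishes on the same ring set $L$ for every linear $A$ acting off the full wheel, whether or not $Af$ is Boolean or positive. The coercivity is then a finite statement about all real functions on six signs vanishing on $L$: $\|(\Id-Q)g\|_2^2\ge\astar\|g\|_2^2$, where $Q$ projects onto characters of independent sets of the six-cycle. Its proof uses that evaluation on $L$ is injective on $\operatorname{Ran}Q$, and $\astar$ is a least Gram eigenvalue divided by $64$. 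Apply it fiber by fiber to $\partial_v(Af)$, using $P_vAf=\omega_v\partial_v(Af)$. It bounds face mass below by $\astar$ times the mass of supports \emph{containing the centre} $v$, not all nonempty mass as you wrote. Supports meeting a wheel only in ring sites contain no face of that wheel and are not controlled. So the amplification must be organized around centre inclusions. Colour sites by $i+3j\pmod 7$ so that wheels within a class are disjoint. Then condition on a class's entire centre pattern and on earlier outcomes, all outside the current wheel, to get failure probability at most $(1-\astar)^{X_c}$. Finally combine the seven classes by H\"older or pigeonhole.

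Two smaller points. On the pivotal side, your case~(b) is vacuous, since by monotonicity every pivotal site has state $(f+1)/2$. In case~(a), the non-pivotal site is whichever of $x,y,z$ a simple crossing visits second, not necessarily $y$. In Step~2, escape of $X_R$ needs the GPS lower-tail theorem (or noise sensitivity for bounded sizes) together with the collar bound; first and second moments alone give only positive probability.
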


In this model, Theorems~\ref{thm:main}, \ref{thm:scales}, and \ref{thm:discrete} therefore give
\[
 H(\cS_R)\asymp H(\cP_R)\asymp m(R),\qquad
 H(Z_s(\cS_R))\asymp H(Z_s(\cP_R))\asymp\frac{m(R)}{m(s)},
\]
at the retained nonroot scales, together with asymptotic singularity of the full laws. Equal first and second inclusion statistics and comparable entropy do not determine the typical local geometry.

Set
\begin{equation}\label{eq:astar}
 \astar=\frac{8-\sqrt{53}}{16}>\frac1{32},
\end{equation}
and define the binary relative entropy in natural units, for $0<a<1$, by
\[
 D(\theta\Vert a)=\theta\ln\frac{\theta}{a}
 +(1-\theta)\ln\frac{1-\theta}{1-a},
\]
with the usual zero-term conventions at $\theta=0,1$.

\begin{theorem}[Finite face bounds]\label{thm:finite}
For every $R$, $m>0$, and $0\le\theta<\astar$,
\begin{equation}\label{eq:density}
 \Prob\left(K_R(\cS_R)\le\frac{\theta}{3}X_R,\ X_R\ge m\right)
 \le\exp\left\{-\frac{m}{7}D(\theta\Vert\astar)\right\}.
\end{equation}
In particular,
\begin{equation}\label{eq:noface}
 \Prob(K_R(\cS_R)=0,\ X_R\ge m)\le(1-\astar)^{m/7}.
\end{equation}
These bounds hold for every fair site-connection function on a finite triangular-lattice domain, with the same complete nonterminal-wheel convention.
\end{theorem}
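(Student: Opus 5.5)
The plan is to reduce \eqref{eq:density} to a conditional one-site estimate: the face detector at a tested center succeeds with probability at least $\astar$, uniformly in everything outside that center's wheel. A Chernoff/Hölder argument then upgrades this to the large-deviation bound.

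\emph{Combinatorial reduction.} For $x\in V_R^\circ$ let $W_x$ be its seven-site wheel. Let $G_x$ be the event that $x\in\cS_R$ and some face of $W_x$ through $x$ (namely $x$ together with two consecutive ring neighbours) lies in $\cS_R$. Each counted face has three vertices, so
\[
3K_R(\cS_R)\ge G:=\sum_{x\in V_R^\circ}\one_{G_x},
\]
and therefore $\{K_R\le\theta X_R/3\}\subset\{G\le\theta X_R\}$. Fix a proper $7$-colouring of the triangular lattice in which same-coloured sites have disjoint wheels; the sublattice of index $7$ does this. Write $X^{(c)}$ and $G^{(c)}$ for the restrictions of $X_R$ and $G$ to colour class $c$, so that $X_R=\sum_c X^{(c)}$ and $G=\sum_c G^{(c)}$.

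\emph{Conditional one-site bound.} For a set $W$ of sites and $C\subset W^c$, let $g_C(\omega_W)=\E[f\chi_C\mid\omega_W]$. Then $\widehat f(B\cup C)=\widehat{g_C}(B)$ for $B\subset W$. Consequently the conditional law of $\cS\cap W$ given $\cS\cap W^c=C$ is the normalised spectral measure of $g_C$.

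For each exterior configuration $\eta$, the restriction $f(\cdot,\eta)$ is a fair site-connection function on the wheel. It is determined by which boundary arcs of $W_x$ are joined through the exterior, and, since the center is nonterminal, by the six-cycle pattern around the center. Hence $g_C$ lies in the linear span $\mathcal L$ of these finitely many wheel-connection functions; this linear closedness is the sense in which the inequality is preserved under exterior conditioning. The key inequality is then the following finite-dimensional statement: for every $g\in\mathcal L$,
\[
\sum_{B\ni x,\ B\text{ contains a face at }x}\widehat g(B)^2\ \ge\ \astar\sum_{B\ni x}\widehat g(B)^2 .
\]
This is a comparison of two quadratic forms on $\mathcal L$. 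I would diagonalise it using the dihedral symmetry of the hexagon and the sign-flip symmetry. After that reduction it should come down to a $2\times2$ generalised eigenvalue problem whose smallest eigenvalue is the root of $256a^2-256a+11=0$, which is exactly $\astar=(8-\sqrt{53})/16$. I expect this coercivity computation to be the main obstacle. In particular one must check that no element of $\mathcal L$ concentrates its center-containing weight on face-free sets, such as alternating subsets of the ring. I would first enumerate the connection patterns on the six-cycle, then compute the Fourier coefficients of the center-dependent parts explicitly.

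\emph{Chernoff step.} Fix a colour $c$ and order its centers $x_1,x_2,\dots$. Because the wheels are disjoint, conditioning on $\cS\cap W_{x_j}^c$ fixes $\one_{x_i\in\cS}$ and $\one_{G_{x_i}}$ for all $i\ne j$. The one-site bound then gives $\Prob(G_{x_j}\mid \cS\cap W_{x_j}^c,\ x_j\in\cS)\ge\astar$. Peeling the centers off one at a time yields, for $\lambda\ge0$,
\[
\E\exp\{\lambda(\theta X^{(c)}-G^{(c)})\}\le\E\prod_{x\in\cS,\ c(x)=c}\bigl(e^{\lambda\theta}(1-\astar+\astar e^{-\lambda})\bigr)=\E\,\phi(\lambda)^{X^{(c)}},
\]
where $\phi(\lambda)=e^{\lambda\theta}(1-\astar+\astar e^{-\lambda})$. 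At the optimal $\lambda$ one has $\phi<1$, with $-\ln\phi=D(\theta\Vert\astar)$ because $\theta<\astar$. Next apply Hölder over the seven classes to $\exp\{\tfrac{\lambda}{7}\sum_c(\theta X^{(c)}-G^{(c)})\}$. Combined with the indicator of $\{G\le\theta X_R,\ X_R\ge m\}$, this gives
\[
\Prob\Bigl(G\le\theta X_R,\ X_R\ge m\Bigr)\le\exp\Bigl\{-\frac m7 D(\theta\Vert\astar)\Bigr\}.
\]
Making this rigorous requires a tilted version that also weights by $\phi^{-X/7}$, and the per-class product bounds are used inside that tilt. The case $\theta=0$ gives $D(0\Vert\astar)=-\ln(1-\astar)$, which is \eqref{eq:noface}.

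Nothing in the argument used the square or criticality beyond fairness, the connection structure, and the complete nonterminal wheels. The argument therefore holds for every fair site-connection function on a finite triangular-lattice domain.
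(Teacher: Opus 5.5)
\textbf{Verdict: genuine gap.} The global architecture is the same as the paper's, but the local inequality that produces $\astar$ is left unproved, and your sketched route to it lacks the idea that makes it true.

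The steps that match the paper are fine. These are the reduction $G\le 3K_R$, the index-$7$ colouring with disjoint wheels, and conditioning on the exterior spectral support. (Your $g_C$ is the paper's exterior operator, specialised to the diagonal projector onto one exterior atom.) The sequential conditional Bernoulli bound, the tilted moment bound $\E[e^{-\lambda G^{(c)}}b(\lambda)^{-X^{(c)}}]\le1$ with $b(\lambda)=1-\astar+\astar e^{-\lambda}$, H\"older over the seven classes, and the Chernoff optimisation are also correct.

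Enumerating all restrictions $f(\cdot,\eta)$ to a wheel and then diagonalising is at best an unperformed computation. The enumeration must cover arbitrary exterior connectivity, possibly terminal ring sites, and every finite domain. Your claim that it collapses to a $2\times2$ problem with root of $256a^2-256a+11=0$ is read off from the stated constant rather than derived. You also give no reason why an element of $\mathcal L$ cannot put all its center-containing weight on face-free supports such as alternating ring subsets, which is exactly what positivity of the constant requires.

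The missing idea is a single linear condition satisfied by every $f(\cdot,\eta)$, so no enumeration is needed. If the open ring sites form one cyclic interval (or there are none, or all six are open), they are joined by an open ring path. Any terminal path through the nonterminal center can then be rerouted around it. Hence $\partial_x f(\cdot,\eta)$ vanishes on the $32$ ring configurations of $L$, for every exterior $\eta$. By linearity this vanishing holds for every $g\in\mathcal L$, and for any exterior average or projection.

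The center-containing coefficients of $g$ are the Walsh coefficients of $\partial_x g$ on the six ring signs. The face-free supports are exactly the $18$ independent sets of $C_6$. So you need $\|(\Id-Q)h\|_2^2\ge\astar\|h\|_2^2$ for every $h$ vanishing on $L$, where $Q$ projects onto the independent-set span. This follows from two facts:
\begin{itemize}
\item Injectivity: a polynomial $\sum_{A\in\cI}b_Ay_A$ that vanishes on $L$ is zero. Evaluate successively at the empty set, the singletons, and the cyclic intervals of lengths $3$, $4$ and $5$; each step kills a new group of coefficients.
\item The sharp constant: it is $(32-4\sqrt{53})/64$, where $32-4\sqrt{53}$ is the least eigenvalue of the $18\times18$ Gram matrix of evaluation on $L$. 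This gives $\|\one_L q\|_2^2\ge\astar\|q\|_2^2$ on the independent-set span. A Cauchy--Schwarz duality step with $q=Qh$ and $h=\one_H h$ turns it into the required inequality.
\end{itemize}
With this lemma in place of your enumeration, the rest of your argument goes through as written.
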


\begin{theorem}[Triangle-free mass]\label{thm:rate}
For critical triangular-site square crossings,
\begin{equation}\label{eq:rate}
 \mu_R(K_R=0,\ S\ne\varnothing)
 \asymp R^2\alpha_4(R)^2=R^{-1/2+o(1)}.
\end{equation}
Consequently $\mu_R(K_R=0)=\epsilon_R+R^{-1/2+o(1)}$.
\end{theorem}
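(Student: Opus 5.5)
The plan is to prove matching upper and lower bounds for $\mu_R(K_R=0,\ \cS\ne\varnothing)$, using Theorem~\ref{thm:finite} for the upper bound and a local two-point construction for the lower bound.

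\emph{Upper bound.} Split the event according to the value of $X_R$.
\begin{itemize}
\item On $\{X_R\ge m\}$, bound~\eqref{eq:noface} gives mass at most $(1-\astar)^{m/7}$. Choosing $m=C\log R$ with $C$ large makes this $O(R^{-10})$, which is negligible.
\item On $\{1\le X_R\le C\log R\}$ or $\{X_R=0\}$ with $\cS_R\ne\varnothing$, the spectral sample is nonempty but has only polylogarithmically many points in the interior $V_R^\circ$. Here I would use the spatial encoding of Section~\ref{sec:upper} together with the standard small-spectral-sample estimates of \cite{GPS}. The probability that $0<|\cS\cap B|\le k$ for the spectral sample restricted to the bulk is controlled by summing over the location $x$ of a mesoscopic cluster at scale $r$. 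The bound is $\sum_r (R/r)^2\alpha_4(r,R)^2 g(r)$, with $g$ polynomial in $k$. The sum is dominated by $r\asymp 1$, giving $R^2\alpha_4(R)^2$ up to $\log$ factors.
\end{itemize}

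The $\log$ loss must be removed. To do so I would refine the use of~\eqref{eq:density}: condition on the configuration outside a mesoscopic annulus around the spectral cluster and apply the six-cycle coercivity inequality mentioned in the abstract, which is preserved under exterior conditioning. Each additional interior spectral point then carries a geometric probability $\le 1-\astar$ of avoiding faces. This yields $\sum_k (1-\astar)^{k/7}\cdot(\text{mass of }k\text{ points})$, which is $O(R^2\alpha_4(R)^2)$.

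The spectral points near the boundary collar (outside $V_R^\circ$) need separate treatment. Their contribution involves three-arm half-plane events, whose squared probabilities times boundary length give $R\alpha_3^+(R)^2=R^{-1}\ll R^{-1/2}$, so they are lower order.

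\emph{Lower bound.} I would exhibit a configuration of $\cS$ consisting of a single interior point (or a pair) with no faces. By the GPS formula, $\Prob(\cS_R=\{x\})=\widehat f(\{x\})^2$, and $\widehat f(\{x\})=\E[f\omega_x]$ equals the influence at $x$, which is $\asymp\alpha_4(R)$ for bulk $x$. A single point contains no face, so summing over $\asymp R^2$ bulk sites gives $\mu_R(|\cS|=1)\asymp R^2\alpha_4(R)^2$, and hence the lower bound. The exponent $R^{-1/2+o(1)}$ then follows from $\alpha_4=R^{-5/4+o(1)}$.

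\emph{Consequence.} The final claim follows because $\{K_R=0\}$ is the disjoint union of $\{\cS=\varnothing\}$, which has mass $\epsilon_R$, and the event just estimated.

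\emph{Main obstacle.} The hard step is the sharp upper bound without logarithmic loss. I must show that spectral samples which are small but nonempty, and which contain no face, have total mass $O(R^2\alpha_4(R)^2)$. This requires the face-density bound to hold conditionally on a localized spectral cluster, that is, a conditional version of Theorem~\ref{thm:finite} combined with the clustering (``$\Prob(\cS\cap B\ne\varnothing,\ \cS\cap B\text{ small})$'') estimates of \cite{GPS}. I would first try to adapt the GPS proof of $\Prob(0<|\cS|<k)$ by inserting the coercivity factor at each scale of the annulus decomposition.
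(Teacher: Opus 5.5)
Your lower bound (singletons, $\widehat f_R(\{v\})=\Prob(v\in\cP_R)\asymp\alpha_4(R)$ over $\asymp R^2$ bulk sites), the consequence, and the overall shape of the upper bound all match the paper. That shape is: a collar error, plus a factor exponentially small in the size coming from coercivity, played against GPS's size-resolved bound for small nonempty spectral samples. The gap is the step you flag as the main obstacle. The route you sketch for it, conditioning outside mesoscopic annuli and reinserting coercivity into GPS's annulus decomposition, is unspecified and also unnecessary.

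The inequality you want, roughly $\Prob(K_R=0,\ |\cS_R|=n)\lesssim(1-\astar)^{n/7}\cdot(\text{mass of small samples})$, does not follow from Proposition~\ref{prop:outside} or Theorem~\ref{thm:finite} as they stand. First, \eqref{eq:noface} is an absolute bound, not one relative to the small mass $\gamma_4(R)$. Second, $\{|\cS_R|=n\}$ depends on the sample inside the wheels, so it is not an admissible exterior conditioning event.

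The missing idea is the fiber decomposition of Lemma~\ref{lem:fibers}. Fix one of the seven color classes $V_c$, and partition supports by $J=S\cap V_c$ and $T=S\setminus\bigcup_{v\in J}W_v$. Given $(J,T)$, everything outside the active disjoint wheels is fixed. The sequential argument of Lemma~\ref{lem:bernoulli} then shows that all $k=|J|$ face tests fail with conditional probability at most $(1-\astar)^k$. Moreover, every support in the fiber has size in $[\ell+k,\ell+7k]$, where $\ell=|T|$. Restrict to $k\ge\lceil n/7\rceil$ and $\ell+k\le n$, and sum over disjoint fibers and the seven colors. This gives $\Prob(F_R,|\cS_R|=n,E_R)\le7(1-\astar)^{\lceil n/7\rceil}\Prob(1\le|\cS_R|\le7n)$. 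Conditioning on fibers rather than on size costs only the harmless window $[1,7n]$.

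With this lemma, the GPS input is used only as stated, not reproved. \cite[Proposition~4.1]{GPS} at the fixed scale $r_0$ gives $\Prob(1\le|\cS_R|\le7n)\le Cn\,g(7n)\gamma_4(R)$ with $g(k)\le\exp\{C\ln^2(k+2)\}$. This $g$ is quasi-polynomial, not polynomial, which is exactly why your cut at $X_R\le C\log R$ loses a factor $R^{o(1)}$. Since $\ln(n\,g(7n))=o(n)$, the sum over $n\ge1$ is $O(\gamma_4(R))$, and no split at $C\log R$ is needed.

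Finally, the collar term should be a first-moment bound: $\Prob(\cS_R\cap B_R\ne\varnothing)\le\sum_{x\in B_R}\Prob(x\in\cP_R)=O(R^{-1})$, using half-plane three-arm and corner estimates (Lemma~\ref{lem:boundary}). Squared one-point probabilities only account for singleton atoms. Also, with $\alpha_3^+(R)=R^{-2}$, your expression $R\,\alpha_3^+(R)^2$ equals $R^{-3}$, not $R^{-1}$, though the conclusion that this term is $o(R^{-1/2})$ is right.
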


The same sharp scale holds for every fixed $0\le\delta<\astar/3$: Theorem~\ref{thm:lowdensity} proves
\[
 \mu_R\bigl(S\ne\varnothing,\ K_R(S)\le\delta|S|\bigr)
 \asymp_\delta R^2\alpha_4(R)^2.
\]
Conditional on such low face density and nonemptiness, the spectral size has an exponentially decaying tail up to a vanishing boundary error.

\begin{theorem}[Total variation deficit]\label{thm:TVrate}
For the same square crossings,
\[
 1-\TV(\mu_R,\nu_R)=\epsilon_R+R^{-11/12+o(1)}.
\]
The second term is the overlap on nonempty supports.
\end{theorem}

The conformal crossing limit gives $\epsilon_R\to0$ \cite{Smirnov}. No rate for the empty atom is asserted. Theorem~\ref{thm:quads} extends discrete singularity to every fixed Jordan quad after conditioning the spectral sample to be nonempty; the corresponding unconditioned conclusion retains the common empty atom. Proposition~\ref{prop:biased} extends the finite face bounds to product measures with uniformly bounded ring parameters. Those finite bounds alone do not supply a near-critical asymptotic singularity theorem.

\subsection{The two uses of local irrelevance}

The entropy upper bound encodes occupied cells through successive spatial refinements. An occupied cell has one of fifteen nonempty four-child masks, so its contribution is bounded by a constant times its occupancy probability. Four-arm estimates in the interior and half-plane arm estimates near the boundary control the sum over cells. Positive relative growth of $m$ then sums the cost over scales. Pete proposed self-similarity and tree descriptions as a route to the percolation entropy bounds in 2011 \cite[slides~21--22]{Pete}; the spectral local-mass estimate in \cite[Lemma~3.2 and equation~(3.7)]{GPS} is a related antecedent.

For the entropy lower bound, a local configuration that makes a coordinate irrelevant contributes zero to any Fourier coefficient containing that coordinate. Bounded overlap permits simultaneous tests across a prescribed support and yields an exponential bound on each spectral atom. Closed circuits give a block version of the argument, and hence the lower bound at each retained nonroot spatial resolution. Monotonicity supplies a direct lower bound for the entropy of the pivotal set.

The singularity proof uses a stronger consequence of the same local independence of the crossing function. At a triangular-lattice site, the derivative vanishes whenever the open neighbors form one cyclic interval. Evaluation on these ring configurations is injective on the span of independent supports of the six-cycle. The resulting coercivity inequality bounds the Fourier mass of supports containing an incident triangle. Its zero identity survives every linear operator outside the complete wheel, including exact exterior Fourier conditioning. Disjoint wheels, sequential conditional bounds, and a seven-color partition amplify the local estimate. The pivotal prohibition follows from a path shortcut across a triangle. For geometric Fourier projections and conditional amplification in percolation, see \cite[Lemma~4.3 and Propositions~5.1 and~6.1]{GPS}.

The local entropy estimate controls the masses of individual atoms; the wheel estimate controls mass on a geometric class of supports. These are different conclusions from a common finite-product method. The critical arm estimates then place both conclusions on the percolation scale. No implication from entropy comparison to singularity is used.

Fourier methods for noise sensitivity and their percolation applications originate in \cite{BKS}; quantitative developments include \cite{SS,GPS,TV}, with background in \cite{GS}. General entropy estimates and structured Boolean classes are treated in \cite{KMS,KKLMS,Han,OWZ,OT,CKLS,WWW,GMP}. The spatial argument here removes the logarithmic loss that coordinate subadditivity can retain.

\subsection{Stability of the separation}

The local statistic gives more than a separating event. Theorem~\ref{thm:distance} shows that, under every coupling of the two laws, a fixed positive fraction of the spectral sites must be changed to obtain a pivotal realization, with probability tending to one. The minimum expected number of changed sites is $\asymp m(R)$. If each spectral site is independently retained with probability $\rho_R$, Proposition~\ref{prop:thinning} proves that the face event still separates the thinned law from every face-free law when $\rho_R^3m(R)\to\infty$. When $\rho_R^3m(R)\to0$, that event has probability tending to zero. Thus the face detector has retention threshold scale $m(R)^{-1/3}=R^{-1/4+o(1)}$ in these two regimes. Corollary~\ref{cor:testing} records the nonempty testing error and the corresponding lower bound on relative entropy.

Sections~\ref{sec:forest}--\ref{sec:mesoscopic} prove the entropy and spatial-resolution estimates. Sections~\ref{sec:localgeometry} and~\ref{sec:faces} establish the local separator and its finite-volume bounds. Section~\ref{sec:criticalrates} proves the critical square rates, including the sharp cost of low face density. Section~\ref{sec:localscope} gives the abstract criterion and the product-measure and quad extensions. Section~\ref{sec:perturbations} proves the perturbation and information consequences. Section~\ref{sec:scope} records the scope of the results and the remaining questions.

\section{Entropy under spatial refinement}\label{sec:forest}

Let a finite rooted forest partition a finite coordinate set $V$ into
nonempty cells: the root cells partition $V$, and the children of each
internal vertex partition its cell. For a random subset $A\subseteq V$,
write $O_v=\one_{\{A\cap C_v\ne\varnothing\}}$ and $p_v=\Prob(O_v=1)$.
An occupied vertex with $d_v$ children has at most $2^{d_v}-1$ nonempty
child masks. The chain rule therefore bounds entropy by expected
occupancy counts, without any independence assumption.

\begin{lemma}[Refinement cost]\label{cor:forestcost}
Let $O_{\rm rt}$ be the root occupancy vector. For arbitrary leaf depths,
\[
 H(A)\le H(O_{\rm rt})
 +\sum_{v\ \mathrm{internal}}p_v\log_2(2^{d_v}-1)
 +\sum_{v\ \mathrm{leaf}}p_v\log_2(2^{|C_v|}-1).
\]
In particular, suppose terminal cells have at most $M\ge1$ elements and
every internal cell has at most $b\ge1$ children. For a single tree with
all terminal cells at depth $L$, let $Z_j$ be its occupied-cell pattern
at depth $j$ and $N_j=|Z_j|$. Then, for $0\le k\le L$,
\begin{align}
 H(A)&\le h_2(\Prob(A\ne\varnothing))
 +\log_2(2^b-1)\sum_{j=0}^{L-1}\E N_j
 +\log_2(2^M-1)\E N_L,\label{eq:treefull}\\
 H(Z_k)&\le h_2(\Prob(A\ne\varnothing))
 +\log_2(2^b-1)\sum_{j=0}^{k-1}\E N_j.\label{eq:treecoarse}
\end{align}
\end{lemma}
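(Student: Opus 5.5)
The plan is to reveal $A$ one level of the forest at a time and apply the chain rule. Since the leaf cells partition $V$, the set $A$ is determined by the collection of traces $A\cap C_v$ over the leaves $v$. For each vertex $v$ with children $u_1,\dots,u_{d_v}$, let $M_v=(O_{u_1},\dots,O_{u_{d_v}})$ be its child mask. For each leaf, let $T_v=A\cap C_v$.

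The whole family $(O_v)_v$ together with $(T_v)_{v\text{ leaf}}$ is a function of $A$. Conversely, it determines $A$, because $A=\bigcup_{v\text{ leaf}}T_v$. Hence $H(A)$ equals the entropy of this family. I order the family top-down: first $O_{\rm rt}$, then the masks $M_v$ in breadth-first order, then the leaf traces. The chain rule gives
\[
H(A)\le H(O_{\rm rt})+\sum_{v\text{ internal}}H(M_v\mid O_v)+\sum_{v\text{ leaf}}H(T_v\mid O_v).
\]
Here I used that conditioning on the full past reduces entropy to at most conditioning on $O_v$ alone, and $O_v$ is determined by the past.

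Next I bound each conditional term. On $\{O_v=0\}$ we have $M_v=0$ and $T_v=\varnothing$, so these variables are deterministic. On $\{O_v=1\}$, at least one child cell is occupied, so $M_v$ takes at most $2^{d_v}-1$ values. Likewise $T_v$ is one of the $2^{|C_v|}-1$ nonempty subsets of $C_v$. Therefore
\[
H(M_v\mid O_v)=p_v\,H(M_v\mid O_v=1)\le p_v\log_2(2^{d_v}-1),
\]
and the leaf terms are bounded the same way. This proves the first inequality. No independence is used anywhere.

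For the specialization to a single tree, the root occupancy is the single bit $\one_{\{A\ne\varnothing\}}$, so $H(O_{\rm rt})=h_2(\Prob(A\ne\varnothing))$. Next I bound $d_v\le b$ and $|C_v|\le M$, using that $x\mapsto\log_2(2^x-1)$ is nondecreasing. All terminal cells lie at depth $L$, so the internal vertices are exactly those at depths $0,\dots,L-1$. Summing $p_v$ over depth $j$ gives $\E N_j$, and this yields \eqref{eq:treefull}.

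For \eqref{eq:treecoarse}, I apply the first inequality to the truncated tree consisting of the vertices at depths $0,\dots,k$. Its depth-$k$ vertices become leaves, and the random object is $Z_k$ rather than $A$. Equivalently, $Z_k$ is determined by $O_{\rm rt}$ and the masks $M_v$ at depths below $k$, so the chain rule stops at that level and no leaf terms appear. When $k=0$ the sum is empty, and the bound is just $H(Z_0)=h_2(\Prob(A\ne\varnothing))$.

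The only point needing care is the claim that $A$ and the top-down mask family determine each other. This holds because the cells of each level partition their parent's cell, and the leaves partition $V$ even when leaf depths vary.
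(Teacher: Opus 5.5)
Your proposal is correct and follows essentially the same route as the paper: expose the root occupancy vector, then the child masks top-down, then the leaf contents. Each chain-rule term is bounded by $p_v\log_2(2^{d_v}-1)$ or $p_v\log_2(2^{|C_v|}-1)$, because the revealed item is deterministic on $\{O_v=0\}$ and ranges over nonempty masks or subsets on $\{O_v=1\}$. For \eqref{eq:treecoarse}, your ``equivalently'' argument is the one to rely on, and it matches the paper's ``stopping at depth $k$'': $Z_k$ is determined by $O_{\rm rt}$ and the masks above depth $k$. (Taken literally, the first inequality applied to the truncated tree still bounds $H(A)$; it yields $H(Z_k)$ only if the leaves of the truncated tree are the singleton depth-$k$ cells, whose leaf terms $\log_2(2^1-1)$ vanish.)
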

\begin{proof}
List the root occupancy vector and then the child masks, with each
parent preceding its descendants. The occupancy of a vertex is known
when its mask is exposed. Its mask is empty when it is unoccupied and
has at most $2^{d_v}-1$ possible values when it is occupied. The chain
rule bounds this contribution by $p_v\log_2(2^{d_v}-1)$. At each leaf,
record the contents of its cell; the analogous contribution is at most
$p_v\log_2(2^{|C_v|}-1)$. These variables determine $A$, proving the
general bound. Grouping vertices by depth gives \eqref{eq:treefull},
and stopping at depth $k$ gives \eqref{eq:treecoarse}.

Vertices with $p_v=0$ contribute zero, as do unary refinement steps.
\end{proof}

Let $m$ be positive on the available spatial scales. A lower power bound
with fixed $c,\epsilon>0$,
\begin{equation}\label{eq:massgrowth}
 m(t)\ge c(t/s)^\epsilon m(s)\qquad(t\ge s),
\end{equation}
implies
\begin{equation}\label{eq:dyadicsum}
 \sum_{j\ge0}\frac1{m(2^js)}\le\frac{C}{m(s)}
\end{equation}
with the sum truncated when only finitely many scales are available.
Indeed, each summand is at most $c^{-1}2^{-j\epsilon}/m(s)$, so one may
take $C=c^{-1}(1-2^{-\epsilon})^{-1}$.
The bound \eqref{eq:dyadicsum} is the scale summability used below.

\section{Lower bounds from local configurations}\label{sec:lower}

We first prove two finite-product entropy bounds, using pivotal fibers and local configurations that make a coordinate irrelevant.

\begin{theorem}[Pivotal entropy under product measures]\label{thm:pivlower}
Let $f:\{0,1\}^V\to\{-1,1\}$ be increasing, and let the coordinates be independent with parameters $p_i\in(0,1)$. Write $\cP_f$ for its pivotal set and $\iota_i=\Prob(i\in\cP_f)$. Then
\begin{equation}\label{eq:pivlower}
 H(\cP_f)+H(f\mid\cP_f)\ge\sum_{i\in V}h_2(p_i)\iota_i,
\end{equation}
where $h_2$ is binary entropy. In particular,
\[
 H(\cP_f)\ge\sum_i h_2(p_i)\iota_i-H(f).
\]
For unbiased coordinates this gives $H(\cP_f)\ge I(f)-1$.
\end{theorem}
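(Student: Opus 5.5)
The plan is to compute the entropy of the full configuration $X=(X_i)_{i\in V}$ in two ways. Under the product measure, $H(X)=\sum_{i\in V}h_2(p_i)$. The pair $(\cP_f,f)$ is a function of $X$, so the chain rule gives
\[
 H(X)=H(\cP_f,f)+H(X\mid\cP_f,f)=H(\cP_f)+H(f\mid\cP_f)+H(X\mid\cP_f,f).
\]
It therefore suffices to prove
\begin{equation*}
 H(X\mid \cP_f,f)\le\sum_{i\in V}(1-\iota_i)\,h_2(p_i),
\end{equation*}
since subtracting this from $\sum_i h_2(p_i)$ yields \eqref{eq:pivlower}.

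For the conditional bound, first apply subadditivity of conditional entropy, $H(X\mid\cP_f,f)\le\sum_i H(X_i\mid\cP_f,f)$. Next, for each $i$, replace the conditioning by the coarser pair $\bigl(\one_{\{i\in\cP_f\}},\,f\one_{\{i\in\cP_f\}}\bigr)$, which is a function of $(\cP_f,f)$; conditioning on less can only increase entropy. Then split according to whether $i\in\cP_f$.

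On $\{i\in\cP_f\}$, monotonicity is used. Reversing $X_i$ changes $f$, and $f$ is increasing, so $f=1$ exactly when $X_i=1$. Hence $X_i=(1+f)/2$ is determined and this part contributes zero entropy.

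On $\{i\notin\cP_f\}$, the key observation is that the event $\{i\in\cP_f\}$ depends only on $X_{V\setminus\{i\}}$. It is therefore independent of $X_i$, so conditionally on $i\notin\cP_f$ the variable $X_i$ is still Bernoulli$(p_i)$. This part contributes $(1-\iota_i)h_2(p_i)$.

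Summing the two cases over $i$ gives the displayed bound and hence \eqref{eq:pivlower}. The second statement follows from $H(f\mid\cP_f)\le H(f)$. For unbiased coordinates, $h_2(1/2)=1$ and $H(f)\le1$ because $f$ takes two values, so $H(\cP_f)\ge\sum_i\iota_i-1=I(f)-1$.

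The only delicate point is the conditional step. One must not condition on events that involve $X_i$ itself beyond pivotality and $f$. The reduction to $\bigl(\one_{\{i\in\cP_f\}},f\one_{\{i\in\cP_f\}}\bigr)$ is exactly what makes the independence argument valid. I expect no further obstacle.
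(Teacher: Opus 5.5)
Your proof is correct, but it is organized differently from the paper's.

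The paper bounds each atom of $(\cP_f,f)$ directly. Because $X_i=(f+1)/2$ for every pivotal $i$, it has $\Prob(\cP_f=S,f=1)\le\prod_{i\in S}p_i$ and $\Prob(\cP_f=S,f=-1)\le\prod_{i\in S}(1-p_i)$. Averaging the logarithmic reciprocals of the atoms gives $H(\cP_f,f)\ge\sum_i\bigl[\Prob(i\in\cP_f,f=1)\log_2(1/p_i)+\Prob(i\in\cP_f,f=-1)\log_2(1/(1-p_i))\bigr]$. Independence of $\{i\in\cP_f\}$ from $X_i$ then evaluates the right side as $\sum_i h_2(p_i)\iota_i$.

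You instead work from the full configuration. Your argument amounts to $H(\cP_f,f)=H(X)-H(X\mid\cP_f,f)\ge\sum_i\bigl[h_2(p_i)-H(X_i\mid\cP_f,f)\bigr]$, which is superadditivity of mutual information over independent inputs. You then coarsen the conditioning to $\bigl(\one_{\{i\in\cP_f\}},f\one_{\{i\in\cP_f\}}\bigr)$ for each $i$.

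Both routes rest on the same two facts: monotonicity fixes $X_i$ on pivotality, and pivotality of $i$ is independent of $X_i$. The paper's version is pointwise and mirrors the atom bound \eqref{eq:guardcoeff} for the spectral sample. So both entropy lower bounds in that section come from the same step: bound each atom, then average the logarithm. Your version needs no atomwise estimate and shows exactly where slack enters, namely in subadditivity and in the coarsening.

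Your remark that the coarsening is necessary is also right. Conditional on the full set $\cP_f$, the law of $X_i$ on $\{i\notin\cP_f\}$ need not be Bernoulli$(p_i)$, because pivotality of other coordinates depends on $X_i$. For example, for an AND of two bits, $\cP_f=\varnothing$ forces both bits to be $0$.
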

\begin{proof}
Monotonicity implies $X_i=(f+1)/2$ whenever $i$ is pivotal. Consequently, for every $S\subseteq V$,
\[
 \Prob(\cP_f=S,f=1)\le\prod_{i\in S}p_i,
 \qquad
 \Prob(\cP_f=S,f=-1)\le\prod_{i\in S}(1-p_i).
\]
Taking logarithmic reciprocals of the positive joint atoms and averaging gives
\begin{align*}
 H(\cP_f,f)
 &\ge\sum_i\Bigl[\Prob(i\in\cP_f,f=1)\log_2(1/p_i)\\
 &\hspace{38mm}+\Prob(i\in\cP_f,f=-1)\log_2(1/(1-p_i))\Bigr].
\end{align*}
The event that $i$ is pivotal depends only on the other coordinates. On this event $f=2X_i-1$. Independence therefore gives probabilities $p_i\iota_i$ and $(1-p_i)\iota_i$ in the last display. Its right-hand side is $\sum_i h_2(p_i)\iota_i$. The chain rule proves \eqref{eq:pivlower}; conditioning reduces entropy and $H(f)\le1$.
\end{proof}

For the spectral bound, a \emph{guard} for coordinate $i$ is an event determined by other coordinates on which changing coordinate $i$ cannot change the function. The following statement also applies to nonmonotone functions.

\begin{theorem}[Local guards and Fourier coefficients]\label{thm:guards}
Let $f:\{0,1\}^U\to\{-1,1\}$ depend only on $V\subseteq U$, under a product measure with $p_i\in[p_0,1-p_0]$, where $0<p_0\le1/2$. For each $i\in V$, suppose that a specified pattern $E_i$ on a set $G_i\subseteq U\setminus\{i\}$ makes $f$ independent of coordinate $i$. For positive integers $D,K$, assume $|G_i|\le D$, and that every set $F_i=G_i\cup\{i\}$ intersects at most $K$ of the sets $F_j$, counting itself. Alternatively, assume that the intersection graph of the sets $F_i$ admits a proper coloring with at most $K$ colors. With normalized product Fourier characters,
\begin{equation}\label{eq:guardcoeff}
 \widehat f(S)^2\le
 \left(\prod_{j\in S}4p_j(1-p_j)\right)(1-p_0^D)^{2|S|/K}
 \quad(S\subseteq V).
\end{equation}
It follows that
\begin{equation}\label{eq:guardentropy}
 H(\cS_f)\ge\sum_{j\in V}
 \left(c_g+\log_2\frac1{4p_j(1-p_j)}\right)
 \Prob(j\in\cS_f),\qquad
 c_g=\frac{2}{K}\log_2\frac1{1-p_0^D}>0.
\end{equation}
The parameters $D$ and $K$ may be replaced by any positive upper bounds.
\end{theorem}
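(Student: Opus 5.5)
The plan is to prove \eqref{eq:guardcoeff} by sequential conditioning over disjoint guarded coordinates, and then read off \eqref{eq:guardentropy} from an atom-wise bound. First I would reduce to a disjoint family. Fix $S\subseteq V$ and consider the graph on $S$ in which $i\sim j$ when $F_i\cap F_j\ne\varnothing$. Under the degree hypothesis each vertex has at most $K-1$ neighbours, so a greedy choice yields an independent set $T\subseteq S$ with $|T|\ge|S|/K$. Under the coloring hypothesis, the largest color class restricted to $S$ gives the same bound. For $i\in T$ the sets $F_i$ are pairwise disjoint.

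Next I would bound the coefficient one guard at a time. Write $\widehat f(S)=\E[f\chi_S]$ and factor $\chi_S=\chi_{S\setminus T}\prod_{i\in T}\chi_{\{i\}}$. For $i\in T$ introduce the indicator $g_i=\one_{E_i}$, which depends only on $G_i$. Since $G_i$ may meet $S\setminus T$, I would not expand the product over $T$ directly. Instead, insert $1=g_i+(1-g_i)$ for each $i\in T$ and argue as follows:
\begin{itemize}
\item on $E_i$, $f$ does not depend on $X_i$;
\item $X_i$ is independent of the coordinates in $G_i$ and of the other $F_j$;
\item hence integrating in $X_i$ against $\chi_{\{i\}}$, which has mean zero, kills every term in which some $g_i$ factor appears.
\end{itemize}
This leaves
\[
\widehat f(S)=\E\Bigl[f\chi_S\prod_{i\in T}(1-g_i)\Bigr].
\]
Cauchy--Schwarz with $|f|\le1$ then gives
\[
\widehat f(S)^2\le\E\Bigl[\chi_S^2\prod_{i\in T}(1-g_i)\Bigr].
\]

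The main obstacle is evaluating this last expectation, because $\chi_S^2$ involves coordinates of $S\cap G_i$ that are correlated with $g_i$. The crude route bounds $\chi_j^2$ by $\max(p_j,1-p_j)/\min(p_j,1-p_j)$, which produces a wrong prefactor. I would therefore aim for the product $\prod_{j\in S}4p_j(1-p_j)$ times a decay factor. This suggests the normalization in \eqref{eq:guardcoeff} is relative to the unnormalized characters $\prod_{j\in S}(2X_j-2p_j)\cdot$const. The cleaner route is to pick $T$ so that additionally $G_i\cap S=\varnothing$ for all $i\in T$, since $F_i\cap F_j=\varnothing$ for $i\ne j\in T$ only controls overlaps within $T$. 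If $G_i$ avoids $S$ entirely, then $\chi_S^2$ restricted to the coordinates of $S$ is independent of $\prod_{i\in T}(1-g_i)$. The expectation then factorizes as $\E\chi_S^2\prod_{i\in T}\Prob(E_i^c)$, and $\Prob(E_i^c)\le1-p_0^D$ because $E_i$ is a specified pattern on at most $D$ coordinates. In general I would handle guards meeting $S$ by conditioning on $X_{S\cap G_i}$ and using that $E_i$ fixes those coordinates. On $E_i^c$ the bound is then at most the worst one-coordinate ratio, and I would absorb it into the $4p_j(1-p_j)$ weights through the chosen normalization. The exponent $2|S|/K$ then comes from $|T|\ge|S|/K$, with the square taken before Cauchy--Schwarz, applying the argument to $\widehat f(S)$ rather than to its square. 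I expect this bookkeeping with the normalization to be where care is needed.

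For \eqref{eq:guardentropy}, write $H(\cS_f)=\sum_S\widehat f(S)^2\log_2(1/\widehat f(S)^2)$ and insert \eqref{eq:guardcoeff}. This gives
\[
H(\cS_f)\ge\sum_S\widehat f(S)^2\sum_{j\in S}\Bigl(\log_2\tfrac1{4p_j(1-p_j)}+\tfrac2K\log_2\tfrac1{1-p_0^D}\Bigr),
\]
and exchanging sums yields $\sum_j(\cdots)\Prob(j\in\cS_f)$. Replacing $D$ and $K$ by larger values only weakens the hypotheses and the constant, so the final remark holds.
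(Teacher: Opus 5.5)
You have a genuine gap in the coefficient bound. Your choice of $T$ and the cancellation identity $\widehat f(S)=\E[f\chi_SM]$, with $M=\prod_{i\in T}(1-\one_{E_i})$, are correct and match the paper, and so is the entropy step. The problem is Cauchy--Schwarz.

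The statement uses normalized characters, so $\E\chi_S^2=1$. Cauchy--Schwarz can therefore give at best $\widehat f(S)^2\le\E[M]\,\E[\chi_S^2M]$. Your displayed version $\widehat f(S)^2\le\E[\chi_S^2M]$ is weaker still: its exponent is only $|T|$, not $2|T|$. Neither version can produce the prefactor $\prod_{j\in S}4p_j(1-p_j)=(\E|\chi_S|)^2$. That prefactor is strictly below $1$ once some $p_j\ne1/2$, and it is exactly what generates the terms $\log_2\frac1{4p_j(1-p_j)}$ in \eqref{eq:guardentropy}. You cannot reinterpret the characters as unnormalized: the statement fixes the normalization, and the normalization determines the spectral law.

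Your fallbacks do not close the gap either. Choosing $T$ with $G_i\cap S=\varnothing$ while keeping $|T|\ge|S|/K$ is impossible in general. For example, take $S=V$ with nearest-neighbour guards: every interior $G_i$ lies inside $S$. Paying the worst one-coordinate ratio $(1-p_0)/p_0$ for each coordinate of $S\cap G_i$ can outweigh the gain $1-p_0^D$, so this loss cannot be ``absorbed into the weights.''

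The missing idea is to bound in $L^1$ rather than $L^2$. Since $|f|\le1$, you have $|\widehat f(S)|\le\E[|\chi_S|M]$. For $j\in S$ one checks $p_j|\chi_j(1)|=(1-p_j)|\chi_j(0)|=\sqrt{p_j(1-p_j)}$. Hence $|\chi_S|/\E|\chi_S|$ is the density of a product probability measure that makes the coordinates of $S$ fair and leaves all others unchanged. Under this measure each guard pattern still has probability at least $p_0^D$, because $1/2\ge p_0$. The guards indexed by $T$ are independent, since their supports $G_i$ are disjoint, whether or not they meet $S$. Therefore
\[
 |\widehat f(S)|\le\Bigl(\prod_{j\in S}2\sqrt{p_j(1-p_j)}\Bigr)(1-p_0^D)^{|T|},
\]
and squaring gives both the prefactor and the exponent $2|T|\ge2|S|/K$. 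This change of measure is also exactly what resolves the correlation obstacle you identified.

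Your route can be partly rescued the same way. The measure $\chi_S^2\,d\Prob$ is the product measure with parameters $1-p_j\in[p_0,1-p_0]$ on $S$. So $\E[\chi_S^2M]\le(1-p_0^D)^{|T|}$ even when guards meet $S$, which gives $\widehat f(S)^2\le(1-p_0^D)^{2|T|}$. That is enough for $H(\cS_f)\ge c_g\E|\cS_f|$ and for Corollary~\ref{cor:guardcross}, but not for the inequality as stated.
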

\begin{proof}
Fix $S$. Under the intersection-count hypothesis, choose $T\subseteq S$ greedily with pairwise disjoint $F_i$; at most $K$ candidates are removed at each choice. Under the coloring hypothesis, choose the largest color class in $S$ instead. In either case, $|T|\ge|S|/K$ and the chosen sets are pairwise disjoint. Put
\[
 \chi_S=\prod_{j\in S}\frac{X_j-p_j}{\sqrt{p_j(1-p_j)}},
 \qquad M=\prod_{i\in T}(1-\one_{E_i}).
\]
Expanding the product defining $M$ gives
\begin{equation}\label{eq:guardcancel}
 \widehat f(S)=\E[f\chi_S M].
\end{equation}
Indeed, a nonempty expansion term contains $\one_{E_i}$ for some $i\in T$. On $E_i$, $f$ is independent of $X_i$; every guard indicator in that term is also independent of $X_i$, by disjointness of the $F_j$. Averaging the mean-zero factor $\chi_i$ annihilates the term.

Put $a_S=\E|\chi_S|=\prod_{j\in S}2\sqrt{p_j(1-p_j)}$, and define
\[
 d\nu_S=\frac{|\chi_S|}{a_S}\,d\Prob.
\]
This is a product probability measure with parameter $1/2$ on $S$ and the original parameters elsewhere: for $j\in S$, the two masses $p_j|\chi_j(1)|$ and $(1-p_j)|\chi_j(0)|$ both equal $\sqrt{p_j(1-p_j)}$. Thus every specified guard pattern has $\nu_S$-probability at least $p_0^D$, and the guards indexed by $T$ are independent under $\nu_S$. Equation~\eqref{eq:guardcancel} gives
\[
 |\widehat f(S)|\le\E[|\chi_S|M]
 =a_S\nu_S(M=1)\le a_S(1-p_0^D)^{|T|}.
\]
Square this estimate and use $|T|\ge|S|/K$ to obtain \eqref{eq:guardcoeff}. Taking logarithmic reciprocals of the positive spectral atoms and summing proves \eqref{eq:guardentropy}.
\end{proof}

\begin{corollary}[Connectivity on bounded-degree graphs]\label{cor:guardcross}
Consider site or bond connectivity between two fixed vertex sets under independent parameters in $[p_0,1-p_0]$. Suppose the underlying graph has bounded degree, and that a single isolated site, respectively a single isolated edge, cannot join the two distinguished sets. Then
\[
 H(\cS_f)\ge c\sum_i4p_i(1-p_i)\iota_i,
\]
with $c>0$ depending only on $p_0$ and the degree bound.
\end{corollary}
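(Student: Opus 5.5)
The plan is to apply Theorem~\ref{thm:guards} directly. For each coordinate we build a guard of bounded size, check the bounded-overlap hypothesis, and then discard the logarithmic term in \eqref{eq:guardentropy}.

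\emph{Site case.} Let $f$ indicate (as a sign) the existence of an open path between the distinguished sets $A$ and $B$. For a site $i$, take $G_i$ to be the set of graph neighbours of $i$, and let $E_i$ be the event that all of them are closed. If $i\notin A\cup B$, then on $E_i$ any open path through $i$ has only one vertex, namely $i$. Such a path cannot join $A$ to $B$ because, by hypothesis, a single isolated site cannot do so. So any open $A$--$B$ path avoids $i$, and flipping $X_i$ does not change $f$.

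When $i\in A\cup B$, we first use the convention that the function depends only on $V$. If an endpoint site can itself be pivotal, the same guard still works: an isolated open $i$ lies in no path of length at least one, and the hypothesis rules out the length-zero path.

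\emph{Bond case.} For an edge $e=\{x,y\}$, let $G_e$ be the set of edges sharing an endpoint with $e$, and let $E_e$ be the event that all of them are closed. Then on $E_e$ the open cluster containing $e$ is the isolated edge $e$ itself. Any $A$--$B$ path through $e$ would therefore have to be that single edge, which the hypothesis excludes. Hence $f$ does not depend on $X_e$ on $E_e$.

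\emph{Overlap hypothesis.} Let $\Delta$ be the degree bound. Then $|G_i|\le \Delta$ for sites and $|G_e|\le 2\Delta-2$ for edges, so we may take $D=2\Delta$. Each $F_i$ lies in the closed graph ball of radius $1$ around $i$, or around the endpoints of $e$. Two such sets meet only if their centres are within distance $2$ or $3$. So the number of $F_j$ meeting a given $F_i$ is at most $K=1+\Delta+\Delta^2+\Delta^3$ (suitably adapted for edges). This constant depends only on $\Delta$.

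\emph{Conclusion.} With these choices, \eqref{eq:guardentropy} gives
\[
H(\cS_f)\ge\sum_j\Bigl(c_g+\log_2\frac1{4p_j(1-p_j)}\Bigr)\Prob(j\in\cS_f).
\]
The logarithm is nonnegative, since $4p(1-p)\le1$, so we can drop it and keep only $c_g$, which depends only on $p_0$ and $\Delta$. It remains to identify $\Prob(j\in\cS_f)$. The identity $\Prob(j\in\cS_f)=4p_j(1-p_j)\iota_j$ is the biased-influence identity \eqref{eq:biasedinfluence}, to be proved later in the paper. It follows from $\sum_{S\ni j}\widehat f(S)^2=p_j(1-p_j)\E[(f(X^{j\to1})-f(X^{j\to0}))^2]/(p_j(1-p_j))\cdot p_j(1-p_j)$. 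This equals $4p_j(1-p_j)\iota_j$ after the $\pm1$ normalisation, and it can be verified directly by expanding $f$ in the variable $X_j$. Taking $c=c_g$ then finishes the proof.

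The only delicate point is the role of the endpoint sites in $A\cup B$ and the exact form of the "isolated site/edge cannot join" hypothesis. This hypothesis is what excludes trivial length-zero or length-one crossings that would make the guard fail, so the endpoint conventions need to be checked carefully.
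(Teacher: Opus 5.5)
Your proposal is correct and follows the paper's proof. It uses the same guards (all neighbouring sites closed, respectively all edges sharing an endpoint with $e$ closed), size and overlap bounds depending only on the degree, Theorem~\ref{thm:guards} with the nonnegative logarithmic term discarded, and the conditional-variance/Parseval identity \eqref{eq:biasedinfluence}, which the paper proves at the end of this corollary. Two small points of presentation: your displayed expression for $\sum_{S\ni j}\widehat f(S)^2$ is garbled but reduces to $p_j(1-p_j)\E[(f(X^{j\to1})-f(X^{j\to0}))^2]=4p_j(1-p_j)\iota_j$, and the paper also notes explicitly that irrelevant neighbouring coordinates may be adjoined to $U$ so every guard pattern lies on product coordinates.
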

\begin{proof}
For a site $i$, close all its neighboring sites. If $i$ is open, its occupied component is then a singleton. For an edge $i=uv$, close all other edges incident to either $u$ or $v$; its open component then consists of at most that edge and its endpoints. In either case the stated separation assumption makes the guarded coordinate irrelevant. The sizes and overlap multiplicities of these local neighborhoods are bounded in terms of the degree. Add finitely many irrelevant neighboring coordinates to $U$ if necessary. Theorem~\ref{thm:guards} applies. Finally, conditional variance in coordinate $i$ is $4p_i(1-p_i)$ on the pivotal event and zero otherwise. Parseval gives
\begin{equation}\label{eq:biasedinfluence}
 \Prob(i\in\cS_f)=4p_i(1-p_i)\iota_i.
\end{equation}
\end{proof}

For triangular-lattice site percolation and square-lattice bond percolation, each guard uses at most six coordinates. In the bond model the coordinate adjacency graph is the line graph of the square lattice, of maximum degree six. A graph of maximum degree six has at most $1+6+6\cdot5=37$ vertices within distance two, so $D=6$ and $K=37$ are valid. In the geometric bond convention of \cite{GPS}, the isolated white component consists of the variable edge tile and the two deterministic white endpoint squares. Its $\ell^\infty$ diameter is $3/2$ in the unit embedding, so horizontal side length at least $2$ is a sufficient separation cutoff, including at boundary tiles. At $p=1/2$, for a square larger than the fixed tile cutoff,
\[
 H(\cS_R)\ge\frac{2}{37}\log_2\frac{64}{63}\,I(f_R).
\]
For triangular sites the constant improves to
\begin{equation}\label{eq:triangular_guard_constant}
 H(\cS_R)\ge\frac{2}{7}\log_2\frac{64}{63}\,I(f_R).
\end{equation}
Indeed, color axial lattice coordinates $(i,j)$ by $i+3j\pmod7$. The seven vertices of a closed neighborhood have different colors, since the neighbor increments are $\pm1,\pm2,\pm3$ modulo seven. Two centers whose neighborhoods intersect lie in a common closed neighborhood, so they have different colors. The coloring alternative in Theorem~\ref{thm:guards} therefore applies with $K=7$. The cancellation in \eqref{eq:guardcancel} is related to the restricted Fourier-coefficient argument in \cite[Lemma~4.3]{GPS}. Here bounded overlap permits guards to be selected from an arbitrary Fourier support. Chang's lower bound for the common-bias product measure $\mu_p^{\otimes n}$~\cite{Chang} uses a different influence transform and degenerates at the unbiased parameter; the local guard hypothesis supplies a positive bound there.

\section{Arm estimates and occupied cells}\label{sec:upper}

Let $Q$ be a translate of $[0,a]\times[0,b]$ in Euclidean coordinates, with $R=\max(a,b)$ and $\min(a,b)\ge R/\kappa$. The constants in this section may depend on the fixed aspect bound $\kappa$. Let $f$ be its left--right crossing function. For a coordinate block $B$, write $\Lambda_B$ for the event that the outside configuration leaves $f$ nonconstant as the coordinates in $B$ vary.

\begin{lemma}[Block variance]\label{lem:blockvariance}
Under any product measure and for any Boolean function,
\begin{equation}\label{eq:blockvariance}
 \Prob(\cS_f\cap B\ne\varnothing)
 =\E\Var(f\mid X_{B^c})\le\Prob(\Lambda_B),
 \qquad
 \Prob(\cP_f\cap B\ne\varnothing)\le\Prob(\Lambda_B).
\end{equation}
\end{lemma}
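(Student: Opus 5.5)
The plan is to prove the spectral identity by expanding $f$ in the orthonormal product basis and grouping the characters according to whether their support meets $B$. Both inequalities will then follow from one observation: off $\Lambda_B$ the function is frozen, so neither the conditional variance nor the pivotal set can see $B$.

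\emph{Step 1: the spectral identity.} Write $f=\sum_A\widehat f(A)\chi_A$ and split
\[
 f=g+h,\qquad g=\sum_{A\cap B=\varnothing}\widehat f(A)\chi_A,\qquad h=\sum_{A\cap B\ne\varnothing}\widehat f(A)\chi_A .
\]
The function $g$ is $X_{B^c}$-measurable. Each term of $h$ contains a mean-zero factor $\chi_j$ with $j\in B$. Under the product measure these factors are independent of $X_{B^c}$, so $\E[h\mid X_{B^c}]=0$. Hence $\E[f\mid X_{B^c}]=g$. It follows that
\[
 \E\Var(f\mid X_{B^c})=\E f^2-\E g^2=\sum_{A\cap B\ne\varnothing}\widehat f(A)^2=\Prob(\cS_f\cap B\ne\varnothing),
\]
using $f^2=1$ and Parseval for the orthonormal characters.

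\emph{Step 2: the spectral inequality.} Off $\Lambda_B$ the function is constant in $X_B$, so $\Var(f\mid X_{B^c})=0$ there. On $\Lambda_B$, since $f$ takes values in $\{-1,1\}$, the conditional variance is $1-\E[f\mid X_{B^c}]^2\le1$. Integrating gives $\E\Var(f\mid X_{B^c})\le\Prob(\Lambda_B)$.

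\emph{Step 3: the pivotal inequality.} Suppose $i\in B$ is pivotal for a configuration $X$. Then flipping $X_i$ changes $f$ while $X_{B^c}$ stays fixed. So $f$ is nonconstant as $X_B$ varies with that outside configuration, which means $X\in\Lambda_B$. Thus $\{\cP_f\cap B\ne\varnothing\}\subseteq\Lambda_B$, and the second inequality follows by taking probabilities.

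No step is a real obstacle. The only point needing care is in Step 1: the conditional-expectation claim uses that the product measure makes $X_B$ independent of $X_{B^c}$, and this holds for arbitrary parameters $p_i$. No monotonicity of $f$ is used, which is consistent with the statement holding for any Boolean function.
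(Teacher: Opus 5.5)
Your proof is correct and follows the same route as the paper: conditional expectation onto $X_{B^c}$ removes exactly the characters whose supports meet $B$, Parseval turns the lost squared norm into the spectral probability, the conditional variance vanishes off $\Lambda_B$ and is at most one everywhere, and a pivotal coordinate in $B$ witnesses that $f$ is nonconstant in $X_B$. You spell out each of these steps in more detail than the paper's four-sentence argument does.
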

\begin{proof}
Conditional expectation onto the outside coordinates removes exactly the orthonormal product characters whose supports meet $B$. Parseval identifies the lost squared norm with the spectral probability in \eqref{eq:blockvariance}. Conditional variance is zero off $\Lambda_B$ and at most one everywhere. A pivotal coordinate in $B$ supplies two choices of its contents with different function values.
\end{proof}

Write $\alpha_4(r,t)$ for the critical alternating four-arm probability of the model, and set $\alpha_4(t)=\alpha_4(r_0,t)$ for a fixed microscopic radius $r_0$, with $\alpha_4(t)=1$ for $t<r_0$. Put
\[
 m(t)=t^2\alpha_4(t).
\]
The critical estimates used below are fixed-factor regularity, quasi-multiplicativity, and
\begin{equation}\label{eq:armgap}
 \alpha_4(u,t)\ge c(u/t)^{2-\eta},\qquad t\ge u\ge r_0,
\end{equation}
for some $\eta>0$. They imply \eqref{eq:massgrowth} for $m$. These facts hold in both critical models considered here; see \cite[Section~2.2]{GPS}.

Assume that the given product measure satisfies the following estimates, uniformly in location at the scales under consideration:
\begin{align}
 \Prob(A_4(r,t))&\le C\alpha_4(r,t),\label{eq:armactual}\\
 \Prob(A_3^H(r,t))&\le C(r/t)^\beta,\qquad\beta>1.\label{eq:halfarm}
\end{align}
Here $A_3^H$ is the alternating three-arm event in any half-plane parallel to a side of $Q$, with both color orders allowed. Fixed microscopic changes of the boundary and fixed-factor changes of radii are permitted.

\subsection{The boundary contribution}

Choose $D_0$ larger than the maximum distance from a tile point to its center. Let $V$ consist exactly of the random tiles meeting $Q$. Project each center coordinatewise onto $Q$, and bisect both sides of $Q$ repeatedly. Use lower-left-inclusive cells, with the outer top and right sides included. At depth $k$, write $s=R2^{-k}$; cell side lengths lie in $[s/\kappa,s]$. Let $J_C\subseteq V$ be the coordinates assigned to cell $C$, and put
\[
 Z_s(A)=\{C:A\cap J_C\ne\varnothing\},\qquad N_s(A)=|Z_s(A)|.
\]
The sets $J_C$ partition all relevant coordinates, including those whose centers lie outside $Q$. Terminate the partition at scale $s_L\in[s_*,2s_*)$, for a fixed sufficiently large $s_*$; every terminal cell then has at most a fixed number $M$ of coordinates.

For the center $z$ of a cell $C$, define $B_C=V\cap\{x:\|x-z\|_\infty\le2s\}$, identifying a coordinate with its tile center. At retained scales $s\ge s_*$, $J_C\subseteq B_C$. Assign $z$ to a nearest side $e$ of $Q$, with deterministic ties, and let $y$ be its perpendicular projection onto $e$. Set
\[
 h=\operatorname{dist}(z,e),\qquad
 t=\min\{|y-v_1|,|y-v_2|,w_e\},
\]
where $v_1,v_2$ are the endpoints of $e$ and $w_e$ is its distance to the opposite side.
Then $h\le t\le R$. Fix a large constant $C_0$ and define
\[
 d=\min\{R,\max(C_0s,h)\},\qquad
 D=\min\{R,\max(d,t)\}.
\]

\begin{lemma}[Side-distance bound]\label{lem:sidebound}
Under \eqref{eq:armactual}--\eqref{eq:halfarm},
\begin{equation}\label{eq:sidebound}
 \Prob(\Lambda_{B_C})\le C\alpha_4(s,d)(d/D)^\beta.
\end{equation}
\end{lemma}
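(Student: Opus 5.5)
The plan is to show that on $\Lambda_{B_C}$ there are disjoint arm events at two annular ranges, and then multiply their bounds using independence over disjoint regions. On $\Lambda_{B_C}$, some choice of the coordinates in $B_C$ makes $f$ change value. A pivotal-type argument then gives four alternating arms from the $\ell^\infty$ box of radius $2s$ around $z$ out to distance $\dist(z,\partial Q)\approx h$: two open arms to the left and right sides, and two closed arms to the top and bottom. Near the boundary, the arms that would reach the nearest side $e$ are cut off there. Beyond distance $h$, three alternating arms survive in the half-plane bounded by $e$ out to distance $t$ from $y$, since $t$ measures the distance along $e$ to the corners and to the opposite side. The only exception is when $e$ is one of the crossing sides, where the arm of the color that terminates on $e$ is lost. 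In every case I will check that at least three alternating arms of the half-plane type $A_3^H$ survive, using both color orders as allowed by \eqref{eq:halfarm}.

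The key steps are as follows.

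\textbf{(1) Small-$h$ and degenerate cases.} If $d=R$ or $D=d$, the second factor is $1$, and I need only $\Prob(\Lambda_{B_C})\le C\alpha_4(s,d)$. If $h\le C_0s$, then $d=C_0 s$ and $\alpha_4(s,d)\asymp1$ by fixed-factor regularity, so the bound reduces to the half-plane term alone.

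\textbf{(2) Four-arm part.} When $h>C_0s$, the event $\Lambda_{B_C}$ implies $A_4(2s,h/2)$ centred at $z$, because the annulus $\{2s<\|x-z\|_\infty<h/2\}$ lies inside $Q$. Its measurable region is the annulus itself. By \eqref{eq:armactual} and fixed-factor regularity this contributes $C\alpha_4(s,d)$.

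\textbf{(3) Half-plane part.} Centre at $y$ and use the half-annulus of radii $2d$ and $t/2$ (when $t\ge 4d$; otherwise $D/d$ is bounded and nothing needs proving). I claim $\Lambda_{B_C}$ implies $A_3^H(2d,t/2)$ in the half-plane determined by $e$. The justification is that the four arms from step (2) continue, and at most one of them may end on $e$. Since $t$ is at most the distance to the endpoints of $e$ and to the opposite side, the remaining arms must travel distance $t/2$ before reaching any other side. By \eqref{eq:halfarm} this gives $C(d/t)^\beta\asymp(d/D)^\beta$.

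\textbf{(4) Combining the two parts.} The annuli in steps (2) and (3) are disjoint up to fixed factors, so I will shrink radii by constants. The events are then measurable with respect to disjoint tile sets, and independence under the product measure multiplies the bounds. I will absorb constants through fixed-factor changes of radii and microscopic boundary changes, both of which the hypotheses allow.

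The main obstacle is the topological claim in step (3). I need to verify, uniformly over which side $e$ is (crossing side or non-crossing side) and over cells whose centres lie near the corners, that at least three alternating arms genuinely reach distance $t/2$ within the half-plane. The bad configuration is two adjacent arms both ending on $e$, so I must rule it out. The plan is to do this by casework on the colour of the boundary condition on $e$. On a left or right side, $e$ acts as open for the crossing, so only an open arm can end there, and the two closed arms plus one open arm must extend. On the top or bottom, $e$ acts as closed, and the argument is symmetric. This is why both colour orders are permitted in $A_3^H$. Corner cells are already handled by the $\min$ in the definition of $t$.
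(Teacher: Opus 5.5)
Your proposal is correct and follows the paper's proof: a full-plane alternating four-arm event about $z$ from scale $s$ to $h$, a half-plane alternating three-arm event about $y$ from scale $d$ to $t$, disjoint supports, and independence, with bounded-ratio annuli discarded by fixed-factor regularity. The ``casework on the boundary colour'' of $e$ is unnecessary, and read literally (a closed path cannot end on a left or right side) it is not a valid justification. The three-arm claim follows from the construction itself: the four arms are the tails of the open left--right and closed top--bottom crossings obtained by filling $B_C$ open and closed, so each side of $Q$ receives exactly one designated tail. Following the other three tails inward from their sides, which all lie at distance at least $t$ from $y$, to the notch around $y$, as the paper does, gives the alternating three-arm event directly.
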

\begin{proof}
First suppose $s\le R/C_0$. Fill the block closed, and let $W_0$ be the union of the closures of the open tiles, including the deterministic white vertex tiles in the bond model. On $\Lambda_{B_C}$ this set has no left--right crossing of $Q$. Planar separation for the finite polygonal set $W_0\cap Q$ gives a simple top--bottom curve in $Q\setminus W_0$. Use this curve as the closed top--bottom crossing. Fill the block open and choose a simple open left--right crossing in the resulting union $W_1$. These curves must intersect in the rectangle, and can intersect only in changed tiles, since outside those tiles $W_0=W_1$. Every changed tile lies inside a square about $z$ of radius $K_0s$, for a fixed $K_0>2$.

Convert each complete crossing to its tile walk and loop-erase that walk before extracting its tails. Each open tile clipped by $Q$ is convex. In an initially closed tile, $W_0$ removes only whole boundary faces or vertices, so the remaining portion clipped by $Q$ is also convex. The retained walks therefore have realizations in their original colored sets, with the closed realization contained in $Q\setminus W_0$. On the triangular lattice they give simple site paths. In the bond model, use incidence walks through random edge tiles and deterministic vertex squares for the open crossing, and through random edge tiles and deterministic face squares for the closed crossing; their loop erasures give simple primal and dual paths. These coordinate paths remain within a fixed tile distance of their geometric realizations. The retained realizations still join opposite sides, so they still intersect in the changed-tile square.

Enlarge the changed-tile square by that fixed distance. Follow the resulting paths from each of their distinguished sides to their first encounter with the square. Same-color exterior tails use disjoint coordinates by simplicity, and opposite colors cannot share a coordinate outside the changed block. In the interior, their outer endpoints have the order left, top, right, bottom. Disjoint planar arcs preserve this order on the inner boundary, so the four colors alternate.

When $h$ is a sufficiently large multiple of $s$, these arms cross an annulus about $z$ from radius $Gs$ to $h/G$, for fixed large $G$. For the boundary range, a square of radius $Gd$ about $y$ contains the changed tiles and, when $t/d$ is sufficiently large, meets only the side $e$. Follow the crossings from the other three sides until their first encounter with this boundary notch, omitting the tail to $e$. The three disjoint tails have the order of those three sides on the notch, hence alternate. Each reaches radius $t/G$, since the other sides are at distance at least $t$ from $y$. They give the required half-annulus event. Shift its boundary outward by a fixed tile distance to include the coordinate paths. Increasing $G$ and $s_*$ absorbs the shift and all tile-to-center errors.

The full annular support lies within distance $h+h/G+O(1)$ of $y$, while the half-annular support begins at distance $Gd-O(1)$, with $d\ge h$. The supports are therefore disjoint. Independence, \eqref{eq:armactual}, and \eqref{eq:halfarm} bound their intersection by $C\alpha_4(s,d)(d/D)^\beta$. If either annulus has bounded ratio, omit it and absorb its factor by fixed-factor regularity. For near-critical measures, first extract smaller annuli in the lattice norm of the arm estimates, with outer radii strictly less than $R$. Apply the estimates for the given measure at these radii, then use critical fixed-factor regularity to compare with the displayed radii. Finally, if $s>R/C_0$, then $d=D=R$ and $R/s<C_0$; the trivial probability bound suffices.
\end{proof}

\begin{proposition}[Occupancy estimate]\label{prop:occupancy}
For $A=\cS_f$ or $\cP_f$,
\begin{equation}\label{eq:occupancyupper}
 \E N_s(A)\le C(R/s)^2\alpha_4(s,R)
 \asymp\frac{m(R)}{m(s)}.
\end{equation}
The same bound holds for the sum of the block-oscillation probabilities $\sum_C\Prob(\Lambda_{J_C})$.
\end{proposition}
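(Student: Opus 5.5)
Since $A\cap J_C\ne\varnothing$ forces $A\cap B_C\ne\varnothing$ at retained scales ($J_C\subseteq B_C$), Lemma~\ref{lem:blockvariance} gives
\[
 \E N_s(A)=\sum_C\Prob(A\cap J_C\ne\varnothing)\le\sum_C\Prob(\Lambda_{J_C})\le\sum_C\Prob(\Lambda_{B_C})
\]
for both $A=\cS_f$ and $A=\cP_f$. This also covers the block-oscillation sum, since oscillation on $J_C$ requires oscillation on the larger block $B_C$. It therefore suffices to sum the side-distance bound \eqref{eq:sidebound} over the $O((R/s)^2)$ cells at depth $k$. When $s>R/C_0$ there are boundedly many cells, and the claim is trivial because $(R/s)^2\alpha_4(s,R)\asymp1$.

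For $s\le R/C_0$, I would organize the cells by their nearest side $e$ and by dyadic shells. Let $h\asymp 2^a s$ with $2^a s\ge C_0s$, and let $t\asymp 2^b s$ with $b\ge a$; cells with $h\le C_0 s$ are grouped into the $a=0$ layer. The number of cells with given $(a,b)$ is $O(2^a\cdot 2^b)$, up to the bounded number of sides and corners: $2^a$ counts rows at distance about $h$, and $2^b$ counts positions along the side at distance about $t$ from the nearer endpoint or the opposite side. Each such cell contributes at most $C\alpha_4(s,2^as)\,2^{-(b-a)\beta}$.

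Summing over $b\ge a$ with $\beta>1$ gives
\[
 \sum_{b\ge a}2^{a+b}2^{-(b-a)\beta}\le C\,2^{2a},
\]
so the total is at most $C\sum_{a\le\log_2(R/s)}2^{2a}\alpha_4(s,2^as)$. Quasi-multiplicativity writes $\alpha_4(s,2^as)\asymp\alpha_4(s,R)/\alpha_4(2^as,R)$. The arm gap \eqref{eq:armgap} bounds $1/\alpha_4(2^as,R)\le C(R/2^as)^{2-\eta}$. Hence the $a$-th term is at most
\[
 C\,\alpha_4(s,R)\,(R/s)^{2-\eta}\,2^{a\eta},
\]
a geometric series in $a$ dominated by its top term $a\approx\log_2(R/s)$. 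This gives $C(R/s)^2\alpha_4(s,R)$.

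The final comparison $(R/s)^2\alpha_4(s,R)\asymp m(R)/m(s)$ is quasi-multiplicativity $\alpha_4(r_0,R)\asymp\alpha_4(r_0,s)\alpha_4(s,R)$, together with fixed-factor regularity for $s$ near $r_0$.

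The main obstacle is the counting step. I need to check that the parametrization by $(h,t)$ with the truncations $d,D$ covers every cell, including cells near corners where $t$ is small and cells with $h\le C_0s$ where $d=C_0s$. I also need the multiplicity of cells per $(a,b)$ shell to be $O(2^{a+b})$ rather than $O(2^a R/s)$; the latter would destroy summability in $b$. This is exactly where $\beta>1$ is needed: near the middle of a side, $t\asymp R$ and there are about $R/s$ cells per row, and the summation over $b$ must absorb this count. The cell side lengths lie in $[s/\kappa,s]$, so all counts hold up to $\kappa$-dependent constants.
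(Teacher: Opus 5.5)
Your proposal is correct and follows essentially the paper's proof. You enlarge $J_C$ to $B_C$, group cells by assigned side and by dyadic classes of $(d,D)$ containing $O_\kappa(uv/s^2)$ cells, sum out $D$ using $\beta>1$, and finish with quasi-multiplicativity and \eqref{eq:armgap}. The counting worry you flag is resolved as in the paper: if $t=w_e$ then $t\ge R/\kappa$ and the whole side has length $O_\kappa(t)$, while indexing by $D$ rather than $t$ absorbs the boundedly many corner cells with $t<C_0s$.
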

\begin{proof}
Group cells by their assigned side and by dyadic classes $d\asymp u$, $D\asymp v$, where $s\lesssim u\le v\lesssim R$. For a fixed side, their centers lie in a strip of perpendicular thickness $O(u)$. Its available tangential length is $O_\kappa(v)$: either an endpoint of the side is within $O(v)$, or the opposite-side distance is $O(v)$, in which case the entire side has length $O_\kappa(v)$. Since centers have spacing at least $s/\kappa$ in both coordinates, the class contains at most
\[
 C_\kappa(u/s+1)(v/s+1)\le C_\kappa uv/s^2
\]
cells. The count includes corner cells, and the opposite-side term $w_e$ in $t$ accounts for the case in which the whole side contributes.

Block enlargement, Lemmas~\ref{lem:blockvariance} and \ref{lem:sidebound}, and the count give
\begin{align*}
 \E N_s(A)
 &\le C\sum_u\sum_{v\ge u}\frac{uv}{s^2}\alpha_4(s,u)(u/v)^\beta\\
 &\le C\sum_u(u/s)^2\alpha_4(s,u),
\end{align*}
because $\sum_{v\ge u}(u/v)^{\beta-1}$ is a bounded geometric series. Quasi-multiplicativity and \eqref{eq:armgap} imply
\[
 (u/s)^2\alpha_4(s,u)
 \le C(u/R)^\eta (R/s)^2\alpha_4(s,R).
\]
Summing over $u$ proves \eqref{eq:occupancyupper}. The same estimates directly bound $\sum_C\Prob(\Lambda_{J_C})$. The final scale comparison follows from quasi-multiplicativity.
\end{proof}

\subsection{Entropy upper bounds}

\begin{theorem}[Arm criterion for entropy]\label{thm:armcriterion}
Assume the preceding arm estimates. Then
\begin{equation}\label{eq:armcriterion}
 H(\cS_f),\ H(\cP_f)\le C m(R),\qquad
 H(Z_s(A))\le C m(R)/m(s)
\end{equation}
at every retained dyadic scale, provided $R$ exceeds a fixed microscopic cutoff. If $I(f)\ge c m(R)$, then $H(\cS_f),H(\cP_f)\le CI(f)$. If this lower bound holds and all coordinate parameters lie in $[p_0,1-p_0]$, then $H(\cS_f)\le C\E|\cS_f|$.
\end{theorem}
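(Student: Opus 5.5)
The plan is to feed the occupancy estimate of Proposition~\ref{prop:occupancy} into the refinement cost of Lemma~\ref{cor:forestcost}, and then sum over scales using \eqref{eq:dyadicsum}. Take the forest to be the dyadic bisection tree of Section~\ref{sec:upper}. Its root is the single cell $Q$ with coordinate set $V$. Each internal cell has at most $b=4$ children, and unary steps cost nothing when only one side is bisected. The terminal cells sit at scale $s_L\in[s_*,2s_*)$ and have at most $M$ coordinates. The root occupancy is a single bit, so $H(O_{\rm rt})\le h_2(\Prob(A\ne\varnothing))\le 1$. By \eqref{eq:treefull},
\[
 H(A)\le 1+\log_2 15\sum_{j=0}^{L-1}\E N_{R2^{-j}}(A)+\log_2(2^M-1)\,\E N_{s_L}(A),
\]
and \eqref{eq:treecoarse} gives the analogous bound for $H(Z_s(A))$, with the sum running over depths $j<k$.

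Next, insert Proposition~\ref{prop:occupancy} at each depth. This gives $\E N_{R2^{-j}}(A)\le C\,m(R)/m(R2^{-j})$. Since \eqref{eq:armgap} together with quasi-multiplicativity yields the growth condition \eqref{eq:massgrowth}, the bound \eqref{eq:dyadicsum} applies. Read upward from the finest scale $s_L$, it gives $\sum_j 1/m(R2^{-j})\le C/m(s_L)$. Because $s_L\asymp s_*$ is a fixed microscopic scale, $m(s_L)\asymp 1$, so $H(A)\le 1+Cm(R)$. Once $R$ exceeds a cutoff, $m(R)\ge 1$, and this becomes $H(A)\le Cm(R)$. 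For the coarse pattern at depth $k$, the scales $R2^{-j}$ with $j<k$ are all at least $2s$. Summability from scale $s$ therefore bounds the sum by $Cm(R)/m(s)$. The additive $1$ is absorbed because $m(R)/m(s)\ge c$ for $s\le R$, again by \eqref{eq:massgrowth}.

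For the remaining claims, if $I(f)\ge cm(R)$ then $H\le Cm(R)\le C'I(f)$ immediately. For the last claim, use \eqref{eq:biasedinfluence}, which gives $\E|\cS_f|=\sum_i4p_i(1-p_i)\iota_i$. When $p_i\in[p_0,1-p_0]$, the factor $4p_i(1-p_i)$ is at least $4p_0(1-p_0)$, so $\E|\cS_f|\ge 4p_0(1-p_0)I(f)\ge c'm(R)$, and the spectral bound follows.

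The step needing the most care is justifying the application of Proposition~\ref{prop:occupancy} uniformly at every depth, including the coarsest scales where $s>R/C_0$. There the trivial bound $N_s\le(\text{number of cells})=O(1)$ is consistent with $m(R)/m(s)\asymp1$. It also requires checking that the summability constant is uniform in $R$ and in the near-critical profile. I would handle both by working only with the critical function $m$, which the arm hypotheses \eqref{eq:armactual}--\eqref{eq:halfarm} have already been compared to.
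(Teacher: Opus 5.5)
Your proposal is correct and follows essentially the same route as the paper: apply Lemma~\ref{cor:forestcost} with $b=4$ and terminal capacity $M$, insert Proposition~\ref{prop:occupancy} at every depth including the terminal one, sum via \eqref{eq:dyadicsum} from $s_L$ (or from $s$ for the stopped tree), absorb the root bit using the growth of $m$, and pass to $I(f)$ and $\E|\cS_f|$ through \eqref{eq:biasedinfluence}. Your remark about unary steps is unnecessary, since the construction bisects both sides at every depth, but it does no harm.
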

\begin{proof}
Apply Lemma~\ref{cor:forestcost} with $b=4$ and terminal capacity $M$. Proposition~\ref{prop:occupancy} bounds each occupied-cell expectation by $Cm(R)/m(s)$. The reciprocal sum \eqref{eq:dyadicsum} is bounded above the fixed terminal scale; the terminal term has the same bound. This gives $H(A)\le1+Cm(R)$. For the pattern at scale $s$, stop the tree at that level and obtain $H(Z_s)\le1+Cm(R)/m(s)$. The growth estimate gives $m(R)/m(s)\ge c$, and also $m(R)\ge c$ above the cutoff, so both root terms are absorbed. The influence formulations follow from the extra lower bound and \eqref{eq:biasedinfluence}.
\end{proof}

\begin{corollary}[Other resolution grids]\label{cor:grids}
The entropy upper bound holds for any deterministic offset of an axis-parallel square grid of side $r$, $s_*\le r\le R$, using the same projected centers:
\[
 H(Z_r^{\mathrm{grid}}(A))\le C m(R)/m(r).
\]
\end{corollary}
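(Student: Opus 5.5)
The idea is to reduce to the dyadic tree bound of Theorem~\ref{thm:armcriterion}. Once we know that every cell of the offset grid meets only a bounded number of dyadic cells at a comparable scale, the grid pattern is a deterministic function of a slightly finer dyadic pattern, together with a bounded amount of extra information per occupied cell. Fix the offset grid of side $r$ with $s_*\le r\le R$. Choose the dyadic depth $k$ with $s=R2^{-k}$ satisfying $r/2<s\le r$, and note that $s$ is still a retained scale, up to enlarging $s_*$ by a fixed factor. If instead $r$ is comparable to $R$, the grid has only $O_\kappa(1)$ cells meeting $Q$, and we use the root-level bound $H\le O(1)\le Cm(R)/m(r)$, since $m(R)/m(r)\ge c$ by \eqref{eq:massgrowth}.

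Each dyadic cell $C$ at scale $s$ has side lengths at most $s\le r$. Its projected centers therefore lie in at most four grid cells of side $r$, because a set of diameter at most $r$ in each coordinate meets at most two intervals of an offset grid per axis. Grid cells are assigned via the same projected centers, so each coordinate $i\in J_C$ falls in one of these at most four grid cells $G$. Hence
\[
 Z^{\mathrm{grid}}_r(A)=\bigcup_{C\in Z_s(A)}\{G:\ A\cap J_C\cap G\ne\varnothing\}.
\]
It is determined by $Z_s(A)$ together with, for each occupied $C$, a nonempty mask among its at most four grid cells. That mask has at most $15$ values.

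By the chain rule, exposing first $Z_s(A)$ and then these masks gives
\[
 H(Z_r^{\mathrm{grid}}(A))\le H(Z_s(A))+\log_2 15\,\E N_s(A).
\]
This is exactly the argument of Lemma~\ref{cor:forestcost} with one extra refinement level whose children are the intersections $J_C\cap G$. Then Theorem~\ref{thm:armcriterion} bounds the first term by $Cm(R)/m(s)$, and Proposition~\ref{prop:occupancy} bounds the second by $Cm(R)/m(s)$. Fixed-factor regularity of $\alpha_4$ gives $m(s)\asymp m(r)$ since $s\asymp r$, which yields the claim.

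The only step needing care is the geometric bookkeeping near the boundary. Projected centers of coordinates outside $Q$ land on $\partial Q$, and the lower-left-inclusive convention with the closed outer top and right sides could in principle place a dyadic cell's centers in a fifth grid cell. I would handle this by allowing a mask over at most nine grid cells, a $3\times3$ neighborhood, which changes only the constant. The truncation when $s$ falls below the terminal scale is avoided by the choice $s\ge r/2\ge s_*/2$ together with a fixed enlargement of the cutoff.
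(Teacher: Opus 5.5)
Your proposal is correct and is essentially the paper's argument: choose a dyadic $s\in(r/2,r]$, note that each dyadic cell meets at most four target cells, record one of fifteen nonempty masks, use the chain rule $H(Z_r^{\mathrm{grid}})\le H(Z_s)+\log_2(15)\E N_s$, and finish with the arm criterion, the occupancy estimate and fixed-factor regularity. The paper handles the bounded range $s_*\le r<2s_*$ more simply, by data processing from $H(A)\le Cm(R)$ rather than by adjusting the cutoff, and your fifth-cell concern does not arise, since even the closed boundary cells have both sides at most $s\le r$.
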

\begin{proof}
For $r\ge2s_*$ choose a retained dyadic parameter $s$ with $r/2<s\le r$. Each rectangular partition cell has both side lengths at most $r$ and meets at most four target cells. Given its occupancy, its contribution to the target pattern can be recorded by one of at most fifteen nonempty masks. Therefore
\[
 H(Z_r^{\mathrm{grid}})\le H(Z_s)+\log_2(15)\E N_s
 \le C m(R)/m(r),
\]
using fixed-factor regularity. The bounded remaining range follows by data processing from $H(A)\le Cm(R)$.
\end{proof}

\section{Uniform inputs for the percolation models}\label{sec:inputs}

We use the following two regimes. In both, rectangles have horizontal and vertical sides in the displayed Euclidean embedding and aspect ratio at most $\kappa$.
\begin{enumerate}[label=(\roman*)]
\item Critical bond percolation on $\mathbb Z^2$, with parameter $1/2$.
\item Site percolation on
\[
 \mathbb T=\{(i+j/2,\sqrt3j/2):i,j\in\mathbb Z\},
\]
with independent parameters $p_x\in[p_*,1-p_*]$, where $p_*\in[p_0,1/2]$, $p_0>0$ is fixed, and $R\le L_\varepsilon(p_*)$. Here $L_\varepsilon$ is the characteristic length defined in \cite[Section~3.1]{Nolin}, with fixed $\varepsilon\in(0,1/2)$ and $L_\varepsilon(1/2)=\infty$.
\end{enumerate}
The second regime includes critical triangular percolation and spatially varying near-critical parameters. Constants are uniform over the indicated parameter profiles, rectangle locations, and retained scales; they may depend on $p_0,\varepsilon,\kappa$. All statements concern $R\ge R_0$ for a fixed sufficiently large lattice cutoff. Numbered citations to Nolin~\cite{Nolin} refer to the published version.

To pass from lattice-coordinate RSW to the Euclidean rectangles used here, cover each fixed-aspect-ratio corridor by a bounded number of overlapping lattice-coordinate parallelograms of side at most $cR$, where $c>0$ is fixed and sufficiently small. Crossing and joining these parallelograms gives the required corridor crossings by RSW and positive association. Thus each near-critical RSW input is applied at a lattice scale below $L_\varepsilon(p_*)$, even after the change of coordinates.

Critical RSW, arm separation, quasi-multiplicativity, and \eqref{eq:armgap} are available in regime (i) from \cite[Section~2.2]{GPS}. The half-plane three-arm bound \eqref{eq:halfarm} holds with $\beta=2$ by \cite[equation~(4.12)]{GPS}; lattice rotations give all four side directions. For regime (ii), Nolin's sandwich-product formulation, arm separation, and arm comparison \cite[Definition~5, Theorems~11 and~27, Proposition~17]{Nolin} give uniform RSW and \eqref{eq:armactual}. The next lemma gives a half-plane estimate uniform in the boundary direction in both regimes.

\begin{lemma}[Independent central inversion]\label{lem:reflection}
In either regime, let $L=\infty$ for critical square-lattice bonds and
$L=L_\varepsilon(p_*)$ for the triangular-lattice sandwich product measures.
For every Euclidean half-plane $H$, every $z\in\partial H$, either alternating
three-arm color order, and $r_*\le r<t\le L$,
\begin{equation}\label{eq:reflection}
 \Prob\bigl(A_3^H(z;r,t)\bigr)
 \le C(r/t)^{1+\eta_6/2},
\end{equation}
where $\eta_6>0$ is a critical alternating six-arm gap for the model:
$\alpha_{6,\mathrm{crit}}(a,b)\le C(a/b)^{2+\eta_6}$.
The constants are uniform in the direction and position of the boundary,
including its microscopic phase.  Any fixed microscopic padding of the
half-plane or of the annulus is allowed, with constants depending on that
padding.  Bond arms use open primal paths and closed dual paths.
\end{lemma}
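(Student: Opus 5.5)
The plan is to prove \eqref{eq:reflection} by a squaring argument. We pair the half-plane event in $H$ with an independent copy, obtained by central inversion through $z$, in the complementary half-plane. Together the two give six alternating arms in the full annulus, and the six-arm gap then bounds the product of the two probabilities.

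\emph{Step 1: the critical models.} Let $\sigma(x)=2z-x$ be the central inversion through $z$. Let $H'$ be $\sigma(H)$ translated by a fixed lattice distance away from $\partial H$, so that the tiles meeting $H$ and those meeting $H'$ are disjoint coordinate sets; this is the microscopic padding allowed in the statement. Write $A$ for $A_3^H(z;r,t)$ with color order open, closed, open along the half-circle. Write $A'$ for the three-arm event in $H'$ with the swapped order closed, open, closed.

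Read cyclically around the full annulus, the six arms of $A\cap A'$ have colors O, C, O, C, O, C, so they alternate. Their supports are disjoint because $H\cap H'=\varnothing$. Hence $A\cap A'$ is contained in a full-plane alternating six-arm event from radius $r+O(1)$ to $t-O(1)$, and independence gives
\[
\Prob(A)\Prob(A')\le C\alpha_{6,\mathrm{crit}}(r,t)\le C(r/t)^{2+\eta_6},
\]
using fixed-factor regularity to absorb the $O(1)$ shifts. The other color order is handled the same way.

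It remains to show $\Prob(A')\ge c\Prob(A)$.
\begin{enumerate}[label=(\alph*)]
\item On the triangular lattice at $p=1/2$, point reflection through a nearby lattice site is a lattice symmetry, and color flip preserves the measure.
\item For square bonds, point reflection through a suitable vertex or face center is a symmetry, and duality exchanges open primal paths with closed dual paths.
\end{enumerate}
This leaves a discrepancy of a bounded translation and a bounded change of padding, since $z$ has arbitrary microscopic phase. I would absorb it by enlarging the inner radius and shrinking the outer radius by fixed amounts, using half-plane arm separation and quasi-multiplicativity to show that such changes alter three-arm probabilities by at most constant factors. Then $\Prob(A)^2\le C(r/t)^{2+\eta_6}$, which is \eqref{eq:reflection}.

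\emph{Step 2: the near-critical triangular regime.} Inhomogeneous parameters destroy the reflection symmetry, so I would not reflect directly. Instead I would invoke Nolin's arm comparison for sandwich product measures \cite[Theorem~27, Proposition~17]{Nolin}, which holds for arbitrary arm events with separation below $L_\varepsilon(p_*)$, including half-plane arm events. It gives $\Prob(A_3^H(z;r,t))\le C\Prob_{1/2}(A_3^H(z;r,t))$ for $t\le L$, and the critical bound from Step 1 then applies. If the published comparison is stated only for full-plane annuli or lattice-coordinate shapes, I would cover the Euclidean half-annulus by a bounded number of lattice parallelograms at scale $ct$, as in the passage from lattice to Euclidean RSW above, and rerun Nolin's separation argument in the half-plane.

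\emph{Main obstacle.} The delicate point is uniformity in the direction and microscopic phase of $\partial H$. The reflected and translated event must be compared with the original at a cost independent of the direction of $H$ and of the position of $z$ relative to the lattice. This needs a half-plane arm-separation and extendability statement, with well-separated arm endpoints on the inner and outer half-circles, that holds for arbitrary directions. I would first prove separation for the original event with uniform constants via RSW in the boundary strips. I would then construct the comparison by gluing: extend separated arms through a fixed-size buffer at radius $\asymp r$ and at radius $\asymp t$, paying a constant by FKG and RSW. Once this is in place, the squaring argument above is routine.
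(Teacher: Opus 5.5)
\paragraph{Step 1.} Your squaring idea is the right one, and Step~1 can be made to work in the critical models. The obstacle you single out there is avoidable, though. Instead of reflecting exactly through $z$ and then translating $H'$, reflect through a point $c$ a bounded distance on the far side of $\partial H$:
\begin{itemize}
\item on the triangular lattice, take $c\in\mathbb T$;
\item for square bonds, take $c\in(1/4,1/4)+\tfrac12\mathbb Z^2$. This inversion itself interchanges primal and dual, so $\tau(e)=g(e)^*$ is an involution on bond coordinates. After state reversal, open primal arms become closed dual arms.
\end{itemize}
The reflected, state-reversed event then has exactly the same probability, and its coordinate support is automatically disjoint from the padded half-plane. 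The bounded offset between $z$ and $2c-z$ disappears when you truncate all six arms to a common full-plane lattice annulus with radii of order $Kr$ and $t/K$. No half-plane arm separation or extendability, uniform in direction and phase, is ever needed.

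\paragraph{Step 2: the gap.} You reduce the near-critical case to a comparison $\Prob(A_3^H)\le C\,\Prob_{1/2}(A_3^H)$ for half-plane events, uniform over all Euclidean boundary directions and microscopic phases. Nolin's comparison is used in this paper as a statement about arm events in lattice-norm annuli. Even a lattice-aligned half-plane version would not give uniformity over arbitrary Euclidean directions and phases.

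Your fallbacks do not close this. Covering by parallelograms works for crossing events glued by RSW and positive association, not for non-monotone multi-colored arm events. Rerunning the separation argument is also not enough, because the near-critical comparison rests on Kesten's Russo-formula and pivotal-summation argument. You would have to redo that for half-plane events in every direction, which is a substantial new result rather than a routine check.

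\paragraph{The missing idea.} You never need a symmetry of $\Prob$. The event $E$ depends only on coordinates in a padded half-plane $U$. Define an auxiliary product measure $\nu$ by:
\begin{itemize}
\item $\nu=\Prob$ on $U$;
\item parameter $1-p_x$ at $\tau(x)$ for each $x\in U$;
\item parameter $1/2$ everywhere else.
\end{itemize}
This $\nu$ is still a sandwich measure. The reflected, state-reversed copy $E'$ satisfies $\nu(E')=\Prob(E)=\nu(E)$, and independence gives $\Prob(E)^2=\nu(E\cap E')$. On $E\cap E'$ there is a full-plane alternating six-arm event in a lattice annulus with outer radius at most $L$. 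Nolin's full-plane comparison then bounds this by $C\alpha_{6,\mathrm{crit}}(a,b)\le C(r/t)^{2+\eta_6}$. This handles inhomogeneity and direction-uniformity at once, using only full-plane inputs.
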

\begin{proof}
Fix one of the two orders and denote its event by $E$.
Write $H=\{x:n\cdot(x-z)\ge0\}$, where $n$ is its inward unit normal.
Identify a random coordinate with its site, or with its primal-edge midpoint.
Choose a fixed $d_0$ so that every coordinate used by $E$ belongs to
\[
 U=\{x:n\cdot(x-z)\ge-d_0\}.
\]
Here and below a geometric set also denotes the random coordinates it contains.
Let $\ell$ exceed the distance from a coordinate to any point of its site-path
or primal/dual-edge realization, including all fixed endpoint and padding
conventions.  Choose a central inversion $g(x)=2c-x$ satisfying
\begin{equation*}
 |c-z|\le D_0,\qquad
 n\cdot(c-z)<-d_0-2\ell-2,
\end{equation*}
where $D_0$ is fixed independently of $n,z$.

For triangular sites, take $c$ in $\mathbb T$.  The lattice has bounded
covering radius, so such a $c$ exists by choosing it close to a point a fixed
distance from $z$ in direction $-n$.  Then $g$ is an involutive lattice
isometry, and the coordinate map is $\tau(x)=g(x)$.

For square bonds, take
\[
 c\in(1/4,1/4)+(1/2)\mathbb Z^2,
 \qquad 2c=a+(1/2,1/2),\quad a\in\mathbb Z^2.
\]
This set of centers also has bounded covering radius.  The map
$g(x)=a+(1/2,1/2)-x$ interchanges the primal lattice and its dual.  For a
primal edge $e$, define
\[
 \tau(e)=g(e)^*,
\]
where the star denotes the perpendicular primal edge crossing the dual edge
$g(e)$.  Taking a crossing edge commutes with $g$, so $\tau^2(e)=e$.
Thus $\tau$ is a bijection of the random bond coordinates, with midpoint
$g(\operatorname{mid}(e))$.  In particular, it exchanges horizontal and
vertical coordinates.  Moreover,
$g(e^*)=\tau(e)$: a primal path is carried to a dual path, and a dual path
to a primal path.

In both models the coordinate sets $U$ and $\tau(U)$ are disjoint.  Indeed,
their geometric half-planes are separated by more than $4\ell+4$.
The same separation, after allowing distance $\ell$ from the coordinates,
keeps all realizations of the two groups of paths disjoint.
Define a product measure $\nu$ by retaining $p_x$ on $U$, setting the
parameter at $\tau(x)$ to $1-p_x$ for every $x\in U$, and using $1/2$
on the remaining coordinates.  This remains in the triangular sandwich
class; in the bond model it is just the critical fair product measure.

Let $E'$ be the $\tau$-image of $E$ with every coordinate state reversed.
It is supported on $\tau(U)$ and has $\nu(E')=\Prob(E)$.
For bonds the state reversal gives exactly the required geometric colors:
an open primal path becomes a closed dual path, and a closed dual path
becomes an open primal path.  Independence now gives
\begin{equation}\label{eq:reflectsquare}
 \Prob(E)^2=\nu(E\cap E').
\end{equation}

On $E\cap E'$, truncate the six paths to a common annulus. Choose a lattice
center $q$ within bounded distance of $z$; both $z$ and $g(z)$ are then
within a fixed distance $D$ of $q$. Let $S_a(q)$ denote the balls in the
lattice norm used for the arm estimates. There is a fixed $A$ such that
\[
 B(q,a/A)\subseteq S_a(q)\subseteq B(q,Aa).
\]
Choose a sufficiently large fixed $K$, and increase $r_*$ to absorb $D$,
all lattice endpoint conventions, and the minimum admissible inner radius
$n_0(6)$ for the six-arm comparison.
When $t\ge4K^2r$, put
\[
 a=\lceil Kr\rceil+c_{\rm lat},\qquad
 b=\lfloor t/K\rfloor-c_{\rm lat},
\]
where the fixed integer $c_{\rm lat}$ absorbs the endpoint conventions of
the arm estimates.
Every original arm has a point inside $S_a(q)$ and reaches outside
$S_b(q)$. Retain the segment from its last visit to the inner boundary
before its first visit to the outer boundary, up to that outer visit.
Fixed lattice endpoint adjustments are absorbed by $c_{\rm lat}$.
This gives six disjoint arms in the lattice annulus, with
\[
 n_0(6)\le a<b\le t/K\le L,
 \qquad a/b\le C r/t;
\]
increasing $r_*$ once more ensures these inequalities despite rounding.

Their colors alternate: the two separated half-planes meet the outer boundary
of the common convex ball in two disjoint arcs.  Each group of three therefore occurs
consecutively in cyclic order.  Disjoint paths preserve the linear order
within a group under the annular truncation.  Its word is $BWB$, while
the other group's word is $WBW$, or conversely.  Reversing a group's
linear orientation does not change either word.  Their cyclic
concatenation is consequently alternating.

The critical six-arm gap holds in both models
\cite[Remark~4.6]{GPS}.  In the triangular regime,
\cite[Theorem~27]{Nolin} compares this alternating six-arm event under
$\nu$ to criticality at the radii $a,b$. The bound $b\le t/K\le L$
ensures that this comparison is within the characteristic-length cutoff.
Thus, in either model,
\[
 \Prob(E)^2\le C\alpha_{6,\mathrm{crit}}(a,b)
 \le C(a/b)^{2+\eta_6}
 \le C(r/t)^{2+\eta_6}.
\]
Taking square roots proves \eqref{eq:reflection}; the bounded-ratio
range follows from probability at most one.  If both original orders
are allowed, apply the argument separately and add the two bounds.
Increasing the fixed constants proves the padded versions as well.
\end{proof}

\begin{corollary}[Half-plane arms with unspecified order]\label{cor:generalreflection}
Fix an integer $j\ge1$.  In regime (ii), for every direction of $H$ and
$z\in\partial H$,
\[
 \Prob\bigl(A_j^{H,\mathrm{any}}(z;r,t)\bigr)
 \le C_j\sqrt{\alpha_{2j,\mathrm{crit}}(r,t)},
 \qquad r_*(j)\le r<t\le L_\varepsilon(p_*),
\]
where the $j$ site arms are disjoint and monochromatic but their colors
and order are unspecified.  The critical event on the right is alternating.
The constants have the same direction, phase, and padding uniformity as
Lemma~\ref{lem:reflection}.
\end{corollary}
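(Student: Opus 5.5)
The plan is to repeat the central-inversion argument of Lemma~\ref{lem:reflection}, with $j$ arms in place of three and no prescribed color word. The only new point is how to choose the colors of the reflected copy so that the combined $2j$ arms alternate.

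\emph{Reduction to a fixed pattern.} The disjoint arms in the half-annulus are ordered linearly along the half-circle, say from one boundary ray of $H$ to the other. The event $A_j^{H,\mathrm{any}}$ is therefore the union, over the $2^j$ color words $w\in\{B,W\}^j$, of events $E_w$ in which the $j$ arms appear in that order with colors $w$. A union bound costs only the factor $2^j$, which is absorbed into $C_j$. So it suffices to prove $\Prob(E_w)\le C\sqrt{\alpha_{2j,\mathrm{crit}}(r,t)}$ for each fixed $w$.

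\emph{Reflection.} Fix $w$ and use the same inversion center $c\in\mathbb T$, the same involution $\tau=g$, and the same auxiliary sandwich measure $\nu$ as in the proof of Lemma~\ref{lem:reflection}. Thus $\nu$ keeps $p_x$ on $U$, uses $1-p_x$ on $\tau(U)$, and uses $1/2$ elsewhere; it stays in the sandwich class. Let $E'$ be the image $\tau(E_w)$, with or without state reversal, chosen as follows. Under the central inversion, the cyclic order of the reflected arms, read continuing from the original group, is again the word $w$ in the same cyclic direction, because a point reflection preserves orientation. The concatenation $w\cdot w'$ must alternate around the circle. Since $w$ itself need not alternate, simple reflection cannot achieve this in general. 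I would fix it by recoloring per arm: choose the reflected copy with colors $w'$ such that, in cyclic order, $w\,w'$ is compatible with alternation. The obstacle is that a per-arm color flip is not a measure-preserving operation. Only global state reversal, with parameters $1-p$ on $\tau(U)$, is free.

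\emph{Handling the obstacle.} Global reversal gives $\bar w$, and $w\bar w$ alternates only when $w$ alternates. For non-alternating $w$, I would instead merge adjacent same-color arms. First I would check whether alternation really fails to be automatic: the intended inequality compares with alternating $2j$ arms, and two adjacent disjoint arms of the same color can be separated only if an opposite-colored arm lies between them. For site percolation this is not forced, so a naive reduction loses arms.

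What I would try first is the standard color-switching trick for arm events, available in the triangular site model. Conditionally on the innermost interfaces, the colors of arms can be flipped one by one without changing the probability, by exploring from one side, as in Nolin's arm comparison. This gives $\Prob_\nu(E_w)\le C\,\Prob_\nu(E_{w_{\rm alt}})$ for an alternating word $w_{\rm alt}$, using that in the half-plane the number of arms is at least one and the exploration works from the boundary. After that reduction, the argument of Lemma~\ref{lem:reflection} applies verbatim: $\Prob(E_{w_{\rm alt}})^2=\nu(E\cap E')$, the $2j$ arms are truncated to a common lattice annulus with $a/b\le Cr/t$, the result is an alternating $2j$-arm event under $\nu$, and \cite[Theorem~27]{Nolin} compares it to criticality below $L_\varepsilon(p_*)$. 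Taking square roots gives the claimed bound, with direction, phase, and padding uniformity inherited from the construction.

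The main obstacle is the color-switching step under inhomogeneous near-critical measures. It needs the half-plane arms to be realizable as extremal arms found by exploration, together with uniform comparison of flipped colors within the sandwich class.
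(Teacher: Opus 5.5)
\textbf{There is a genuine gap, at the step you flagged yourself.} Your plan needs $\Prob(E_w)\le C\,\Prob(E_{w_{\mathrm{alt}}})$ for half-plane arms under the inhomogeneous measure, before reflecting. The colour-switching trick explores up to an interface and then flips the unexplored region. This preserves the law only when the parameters there equal $1/2$. With $p_x\ne 1/2$, the flipped configuration has, conditionally on the explored interface, parameters $1-p_x$ on a random region. The resulting law is a mixture, not a sandwich product measure, so Nolin's comparison theorems do not apply to it.

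Routing the comparison through criticality instead would need a near-critical half-plane arm comparison that is uniform in the direction and microscopic phase of $\partial H$. Avoiding exactly that input is the purpose of the inversion trick in Lemma~\ref{lem:reflection}. There is also a secondary slip: for even $j$, the word $w_{\mathrm{alt}}\bar w_{\mathrm{alt}}$ is not alternating, since the colour repeats at the junction. So your ``verbatim'' appeal to Lemma~\ref{lem:reflection}, which uses state reversal, fails for even $j$ unless you drop the reversal.

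\textbf{The missing observation is that the reflected $2j$-word does not need to alternate.} With global state reversal, the cyclic word $w\bar w$ has exactly $j$ arms of each colour, so it is nonconstant for every $w$. The paper indexes by the number $k$ of open arms, which gives the same balanced count. Under $\nu$, the event $E_w\cap E'$ then yields a fixed nonconstant $2j$-arm full-plane event in a common lattice annulus below $L_\varepsilon(p_*)$. Nolin's Theorem~27 compares this event with the critical event for the same word. At criticality, the full-plane colour-switching comparison (Nolin's Proposition~20) replaces any nonconstant word by the alternating one, and there the flipping is measure-preserving. Summing over the finitely many words and taking square roots proves the corollary, with no half-plane or near-critical colour switching needed.
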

\begin{proof}
For $0\le k\le j$, let $E_k$ be the event of $j$ disjoint arms with
$k$ open arms and $j-k$ closed arms.
Apply the site-inversion construction to $E_k$ and its color-reversed
copy. Then $\Prob(E_k)^2$ is bounded by the probability of $2j$
disjoint full-plane arms with exactly $j$ arms of each color, after
truncation to a common lattice annulus below the characteristic length.
Sum over the finitely many possible balanced color words.  For each word,
use \cite[Theorem~27]{Nolin} to compare with criticality and
\cite[Proposition~20]{Nolin} to compare the critical nonconstant word
with the alternating word.  Critical fixed-factor regularity absorbs
the radius changes.  Taking square roots and then summing over
$k=0,\ldots,j$ proves the result.
\end{proof}

Lemma~\ref{lem:reflection} gives \eqref{eq:halfarm} in both regimes with $\beta=1+\eta_6/2>1$, supplying the half-plane input in Theorem~\ref{thm:armcriterion}.

The pivotal scale and its boundary-arm compensation are recorded in \cite[Remark~36, equation~(7.17)]{Nolin}. We state the uniform interior estimates needed here.

\begin{lemma}[Bulk pivotal estimates]\label{lem:bulkmoments}
Fix a positive relative distance $\gamma$ from the sides of $Q$. Uniformly for relevant coordinates $x,y$ in this interior region,
\begin{align}
 c\alpha_4(R)\le\Prob(x\in\cP_f)&\le C\alpha_4(R),\label{eq:bulkone}\\
 \Prob(x,y\in\cP_f)&\le C\alpha_4(R)\alpha_4(|x-y|),\quad x\ne y.\label{eq:bulktwo}
\end{align}
The constants may depend on $\gamma$ and the previously fixed parameters. Consequently
\begin{equation}\label{eq:meancomparison}
 I(f)\asymp m(R),\qquad J(f)\asymp I(f).
\end{equation}
\end{lemma}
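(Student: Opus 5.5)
The plan is to prove the two-point bounds \eqref{eq:bulkone}--\eqref{eq:bulktwo} by the standard arm-event reduction, and then sum them to get \eqref{eq:meancomparison}.

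\textbf{One-point bound.} Fix an interior coordinate $x$ at distance at least $\gamma R$ from $\partial Q$. If $x\in\cP_f$, there is an open left--right crossing and a closed top--bottom crossing, both passing through the tile of $x$. Following these crossings out of the tile gives four alternating arms from a microscopic neighbourhood of $x$ to distance $\gamma R$. Assumption \eqref{eq:armactual} and fixed-factor regularity then give
\[
 \Prob(x\in\cP_f)\le C\alpha_4(r_0,\gamma R)\le C_\gamma\alpha_4(R).
\]

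For the matching lower bound, I use the standard RSW/arm-separation construction: well-separated four arms at scale $\gamma R/2$ around $x$ are extended by RSW crossings in corridors to the four sides of $Q$. In regime (ii) I take the uniform near-critical RSW and separation from \cite[Theorem~11, Proposition~17]{Nolin}, together with the Euclidean-corridor covering described above. The extension costs only a constant factor by positive association. Arm separation and \cite[Theorem~27]{Nolin} compare the separated event to $\alpha_4(R)$.

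\textbf{Two-point bound.} Take $x\ne y$ and put $r=|x-y|$. On $\{x,y\in\cP_f\}$ we have two kinds of arms:
\begin{itemize}
\item four alternating arms around $x$ from radius $r_0$ to $r/3$;
\item four alternating arms around $y$ on the same range;
\item four alternating arms from the annulus around the midpoint between radius $2r$ and $\gamma R$, when $2r<\gamma R$.
\end{itemize}
These annuli are disjoint, so independence gives the bound
\[
 C\alpha_4(r/3)^2\,\alpha_4(2r,\gamma R).
\]
Quasi-multiplicativity turns $\alpha_4(r)\alpha_4(2r,R)$ into $C\alpha_4(R)$, and fixed-factor regularity absorbs the constant changes. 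This yields $C\alpha_4(R)\alpha_4(r)$. When $r\ge\gamma R/2$, I instead use two disjoint local four-arm events, giving $C\alpha_4(R)^2\le C\alpha_4(R)\alpha_4(r)$.

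\textbf{Mean comparison.} For the upper bound on $I(f)$, I use Proposition~\ref{prop:occupancy} at $s$ comparable to $s_*$; each terminal cell contains boundedly many coordinates, so $I(f)\le C\,\E N_{s_L}(\cP_f)\le Cm(R)$. For the lower bound, I sum \eqref{eq:bulkone} over the $\asymp R^2$ interior coordinates, which gives $I(f)\ge cR^2\alpha_4(R)=cm(R)$. Finally, \eqref{eq:biasedinfluence} with $p_i\in[p_0,1-p_0]$ gives
\[
 4p_0(1-p_0)\,I(f)\le J(f)\le I(f).
\]

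\textbf{Main obstacle.} The main obstacle is the near-critical lower bound in \eqref{eq:bulkone}, uniformly over inhomogeneous profiles. I would rely on Nolin's sandwich-class comparison theorem, which makes the four-arm probability comparable to the critical one below $L_\varepsilon(p_*)$. Combined with the RSW gluing, this yields a uniform constant.
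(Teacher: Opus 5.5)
Your proposal is correct and follows essentially the same route as the paper: arm upper bounds for the one-point estimate, separated arms glued through RSW corridors for the lower bound, two local annuli plus a common outer annulus with quasi-multiplicativity for the pair bound, and summation with Proposition~\ref{prop:occupancy} and \eqref{eq:biasedinfluence} for \eqref{eq:meancomparison}. The paper additionally makes explicit three points you leave implicit: the separated-domain form of positive association needed for the mixed-colour gluing, the bounded-$|x-y|$ case (handled by the one-point bound), and the finite-energy microscopic gluing at the coordinate itself.
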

\begin{proof}
Pivotality at $x$ forces four alternating arms to radius $c_\gamma R$, for a sufficiently small fixed $c_\gamma>0$, giving the one-point upper bound. For the lower bound, require four arms from the coordinate to a square of radius $c_\gamma R$, with separated landings directed towards the four distinguished boundary arcs. Arm separation and comparison give probability at least $c\alpha_4(R)$. Extend them through four separated corridors to the respective sides of $Q$, open towards the left and right and closed towards the top and bottom. Uniform RSW and separated-arm extension bound the conditional probability of these extensions below by a positive constant, uniformly in the profile and scale. For mixed colors this gluing is justified by the separated-domain form of positive association, as in \cite[Lemma~13 and Theorem~11]{Nolin}; in the critical bond setting the corresponding arm-separation construction is reviewed in \cite[Section~2.2]{GPS}. The resulting outside configuration makes $x$ pivotal. Finite lattice endpoint adjustments are absorbed by finite energy and the fixed microscopic cutoff.

For the pair estimate put $d=|x-y|$. When $d$ is small compared with $R$, use two disjoint four-arm annuli of outer radius $cd$ about $x,y$, and a disjoint common four-arm annulus from radius $Cd$ to $c_\gamma R$. Independence and the full-plane arm comparison give
\[
 \Prob(x,y\in\cP_f)\le C\alpha_4(d)^2\alpha_4(d,R)
 \asymp C\alpha_4(d)\alpha_4(R).
\]
When $d\asymp R$, the two disjoint microscopic-to-macroscopic annuli give $C\alpha_4(R)^2$, the same bound by fixed-factor regularity. For bounded $d$, the one-point bound suffices. This proves \eqref{eq:bulktwo}.

There are at least $cR^2$ coordinates in a fixed interior rectangle. Summing \eqref{eq:bulkone} gives $I(f)\ge cm(R)$. Conversely, at terminal scale each occupied cell contains at most $M$ pivotal coordinates, so Proposition~\ref{prop:occupancy} gives $I(f)\le M\E N_{s_L}(\cP_f)\le Cm(R)$. Finally, $4p_0(1-p_0)I(f)\le J(f)\le I(f)$ by \eqref{eq:biasedinfluence}.
\end{proof}

\section{Entropy at each spatial resolution}\label{sec:mesoscopic}

\begin{lemma}[Two-coordinate identity]\label{lem:biasedpair}
For any Boolean function under independent coordinate parameters, define $\rho_i(1)=p_i$ and $\rho_i(0)=1-p_i$. Then, for $i\ne j$,
\begin{equation}\label{eq:pairidentity}
 \Prob(i,j\in\cS_f)=4\E\bigl[\rho_i(1-X_i)\rho_j(1-X_j)
                         \one_{\{i,j\in\cP_f\}}\bigr].
\end{equation}
In particular, if $p_i\in[p_0,1-p_0]$,
\begin{equation}\label{eq:paircompare}
 4p_0^2\Prob(i,j\in\cP_f)
 \le\Prob(i,j\in\cS_f)
 \le4(1-p_0)^2\Prob(i,j\in\cP_f).
\end{equation}
\end{lemma}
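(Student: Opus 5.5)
The plan is to compute both sides as an average over the $2\times2$ square of values of $(X_i,X_j)$, with the remaining coordinates $X_{-ij}$ fixed, and to compare them corner by corner.

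\textbf{Step 1: spectral side as a second derivative.} Let $g=\sum_{A\supseteq\{i,j\}}\widehat f(A)\chi_A$, so that $\Prob(i,j\in\cS_f)=\E g^2$ by orthonormality. Factor $g=\chi_i\chi_j h$, where $h=\E[f\chi_i\chi_j\mid X_{-ij}]$ depends only on $X_{-ij}$. Since $\chi_i,\chi_j,h$ are independent and $\E\chi_i^2=\E\chi_j^2=1$, we get $\E g^2=\E h^2$. Write $\partial_i f=f|_{X_i=1}-f|_{X_i=0}$. A one-line check of the two values of $\chi_i$ gives
\[
 \E_{X_i}[f\chi_i]=\sqrt{p_i(1-p_i)}\,\partial_i f,
\]
because $p_i\chi_i(1)=\sqrt{p_i(1-p_i)}=-(1-p_i)\chi_i(0)$. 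Applying this in $i$ and then in $j$ yields
\[
 \Prob(i,j\in\cS_f)=p_i(1-p_i)p_j(1-p_j)\,\E\bigl[(\partial_i\partial_j f)^2\bigr].
\]

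\textbf{Step 2: pivotal side.} Fix $X_{-ij}$ and average over the four corners $(x_i,x_j)$. The corner has weight $\rho_i(x_i)\rho_j(x_j)$, and the integrand carries the factor $\rho_i(1-x_i)\rho_j(1-x_j)$. Their product is always $p_i(1-p_i)p_j(1-p_j)$. Hence the right side of \eqref{eq:pairidentity} equals
\[
 4p_i(1-p_i)p_j(1-p_j)\,\E\bigl[N\bigr],
\]
where $N$ is the number of corners at which both $i$ and $j$ are pivotal.

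\textbf{Step 3: the combinatorial identity.} It remains to check that $(\partial_i\partial_j f)^2=4N$ pointwise in $X_{-ij}$. This is the only real content of the proof. Note that $\partial_i\partial_j f=f(11)-f(10)-f(01)+f(00)$, and a corner is doubly pivotal exactly when its value differs from both adjacent corners. Up to symmetry and a global sign there are four cases for the $\pm1$ pattern on the square:
\begin{itemize}
\item Constant: both sides are $0$.
\item One corner differing from the other three: the second difference is $\pm2$, so its square is $4$, and exactly that one corner is doubly pivotal, so $4N=4$.
\item Dependence on a single coordinate: the second difference is $0$, and no corner is doubly pivotal.
\item The xor pattern: the second difference is $\pm4$, so its square is $16$, and all four corners are doubly pivotal, so $4N=16$.
\end{itemize}
Combining Steps 1–3 proves \eqref{eq:pairidentity}.

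\textbf{Step 4: the comparison.} The bounds \eqref{eq:paircompare} follow directly, since $\rho_i(1-X_i)$ and $\rho_j(1-X_j)$ lie in $[p_0,1-p_0]$ whenever $p_i,p_j\in[p_0,1-p_0]$.
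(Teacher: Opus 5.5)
Your proposal is correct and follows the paper's proof essentially step for step. Both arguments fix the outside coordinates, write the $\{i,j\}$-spectral mass as $p_i(1-p_i)p_j(1-p_j)$ times the squared mixed second difference, check the sixteen-table identity $(a_{11}-a_{10}-a_{01}+a_{00})^2=4n$, and observe that each corner's weight times $\rho_i(1-u)\rho_j(1-v)$ is constant; your Step~1 only spells out the fiber-coefficient and Parseval computation that the paper states directly.
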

\begin{proof}
Fix all coordinates except $i,j$, and write $a_{uv}=f(X_i=u,X_j=v)$. The squared coefficient of the normalized two-coordinate character on this fiber is
\[
 p_i(1-p_i)p_j(1-p_j)
 (a_{11}-a_{10}-a_{01}+a_{00})^2.
\]
Let $n$ be the number of corners $(u,v)$ at which both coordinates are pivotal. For every Boolean two-coordinate table,
\[
 (a_{11}-a_{10}-a_{01}+a_{00})^2=4n.
\]
Indeed, the sixteen tables consist of constants and one-coordinate tables ($n=0$), tables with one exceptional corner ($n=1$), and the two parity tables ($n=4$); their squared differences are respectively zero, four, and sixteen. Each corner's probability multiplied by $\rho_i(1-u)\rho_j(1-v)$ is $p_i(1-p_i)p_j(1-p_j)$. Averaging this fiber identity over the outside coordinates gives \eqref{eq:pairidentity}, since Parseval identifies the averaged squared two-coordinate coefficient with $\sum_{S\supseteq\{i,j\}}\widehat f(S)^2$. The bounds on each $\rho$ give \eqref{eq:paircompare}.
\end{proof}

When $p_i=p_j=1/2$, \eqref{eq:pairidentity} gives equality of the two-point inclusion probabilities; see \cite[equation~(2.13)]{GPS} for the uniform product measure.

\begin{lemma}[Guards for several blocks]\label{lem:blockguards}
Let $f$ be Boolean under a product measure, and let $B_1,\ldots,B_k$ be disjoint coordinate sets. Suppose each event $E_i$ is measurable outside every $B_j$, makes $f$ independent of all coordinates in $B_i$, and has probability at least $q>0$. Suppose also that the events $E_i$ depend on disjoint sets of coordinates. Then, for $A=\cS_f$ or $\cP_f$,
\begin{equation}\label{eq:allhit}
 \Prob(A\cap B_i\ne\varnothing\text{ for every }i)\le(1-q)^k.
\end{equation}
\end{lemma}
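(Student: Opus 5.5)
Both bounds come from the fact that on a guard event $E_i$ the block $B_i$ is invisible to $f$. I treat the pivotal set directly. For the spectral sample I use the product-expansion cancellation of Theorem~\ref{thm:guards}, applied to block projections instead of single characters.

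\emph{Pivotal case.} If some coordinate $x\in B_i$ is pivotal, then $E_i$ must fail. Indeed, $E_i$ is measurable outside $B_i$, so flipping $x$ does not change whether $E_i$ holds. On $E_i$ the function $f$ does not depend on $x$, so $x$ cannot be pivotal. Hence the event in \eqref{eq:allhit} is contained in $\bigcap_i E_i^c$. The events $E_i$ depend on disjoint coordinate sets, so this intersection has probability $\prod_i(1-\Prob(E_i))\le(1-q)^k$.

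\emph{Spectral case.} Write $\Pi_i$ for the orthogonal projection onto the span of characters whose supports meet $B_i$, that is $\Pi_i=\Id-\E[\,\cdot\mid X_{B_i^c}]$. These projections commute, since they act on disjoint coordinate blocks of a product space. The operator $\Pi=\Pi_1\cdots\Pi_k$ projects onto characters meeting every $B_i$, so the left side of \eqref{eq:allhit} equals $\|\Pi f\|_2^2$.

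Set $M=\prod_i(1-\one_{E_i})$. I claim $\Pi f=\Pi(fM)$. Expanding $M$ gives $\Pi f$ plus a signed sum of terms $\Pi(f\prod_{i\in T}\one_{E_i})$ with $T\neq\varnothing$. Fix such a term and pick $i\in T$. On $E_i$, $f$ is independent of $X_{B_i}$. Every indicator $\one_{E_j}$ is measurable outside $B_i$ by hypothesis. So $g=f\prod_{j\in T}\one_{E_j}$ does not depend on $X_{B_i}$: where $\one_{E_i}=0$ it vanishes independently of $X_{B_i}$, and where $\one_{E_i}=1$ each factor is free of $X_{B_i}$. Therefore $\Pi_i g=0$. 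Since the projections commute, $\Pi g=0$, which proves the claim.

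It follows that
\[
\|\Pi f\|_2^2=\|\Pi(fM)\|_2^2\le\|fM\|_2^2=\E[M]=\prod_i(1-\Prob(E_i))\le(1-q)^k,
\]
using $f^2=1$, $M\in\{0,1\}$, and independence of the $E_i$.

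The only delicate step is the vanishing $\Pi_i g=0$. It requires that every other guard $E_j$ avoids $B_i$, which is exactly the hypothesis that each $E_i$ is measurable outside every $B_j$. Disjointness of the guards' supports is used only in the final product bound.
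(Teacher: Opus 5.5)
Your proposal is correct and follows essentially the same route as the paper. The pivotal bound comes from containment in $\bigcap_i E_i^c$. The spectral bound comes from the commuting block projections, the cancellation $\Pi f=\Pi(fM)$ obtained by expanding $M$, and the contraction $\|\Pi(fM)\|_2^2\le\|fM\|_2^2$.
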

\begin{proof}
Let $\E_{B_i}$ average over $B_i$, and set $D_i=\mathrm{Id}-\E_{B_i}$. These are commuting orthogonal projections. Write $D=\prod_iD_i$ and $M=\prod_i(1-\one_{E_i})$. Expanding $M$ shows $Df=D(fM)$: any nonempty expansion term contains an event $E_i$ on which the entire term is independent of $B_i$, so $D_i$ annihilates it. On the Fourier side $D$ retains exactly the supports meeting every $B_i$. Hence
\[
 \Prob(\cS_f\cap B_i\ne\varnothing\ \forall i)
 =\|Df\|_2^2=\|D(fM)\|_2^2
 \le\|fM\|_2^2
 =\Prob\Bigl(\bigcap_iE_i^c\Bigr)\le(1-q)^k.
\]
For pivotals, the all-hit event is contained in $\bigcap_iE_i^c$, giving the same bound directly.
\end{proof}

\begin{theorem}[Matching occupied-pattern entropy]\label{thm:mesoscopic}
In both regimes of Section~\ref{sec:inputs}, for every retained nonroot dyadic scale,
\[
 H(Z_s(A))\asymp\E N_s(A)\asymp m(R)/m(s),
 \qquad A=\cS_f\text{ or }\cP_f.
\]
The entropy comparison holds at the root for the pivotal set. For the spectral set its root entropy is \eqref{eq:rootexact}.
\end{theorem}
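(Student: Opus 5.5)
The upper bounds are already in hand: Proposition~\ref{prop:occupancy} gives $\E N_s(A)\le Cm(R)/m(s)$, and Theorem~\ref{thm:armcriterion} gives $H(Z_s(A))\le Cm(R)/m(s)$ at every retained scale. The inputs for both regimes were checked in Section~\ref{sec:inputs} through Lemma~\ref{lem:reflection}. It therefore remains to prove two lower bounds at nonroot scales $s\le R/2$: $\E N_s(A)\ge c\,m(R)/m(s)$ and $H(Z_s(A))\ge c\,m(R)/m(s)$. At the root I would argue separately. The spectral identity \eqref{eq:rootexact} is immediate, since $Z_R(\cS_f)$ is the indicator of $\cS_f\ne\varnothing$ and $\Prob(\cS_f=\varnothing)=\widehat f(\varnothing)^2=(\E f)^2$. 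For the pivotal root entropy, I would show that $\Prob(\cP_f\ne\varnothing)$ is bounded away from $0$ and $1$. It is bounded below because Lemma~\ref{lem:bulkmoments} and a second-moment argument give a pivotal with positive probability. It is bounded above because RSW gives, with positive probability, two disjoint open crossings; this makes $\cP_f$ empty.

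For the expectation lower bound, I would count interior cells. Fix $\gamma$ and consider the cells $C$ of depth $k$ lying at relative distance $\gamma$ from $\partial Q$; there are $\asymp(R/s)^2$ of them, using $s\le R/2$ and a smaller $\gamma$ if needed. For such a cell I would lower-bound $\Prob(\cP_f\cap J_C\ne\varnothing)$ by a second-moment argument over the coordinates of $J_C$. With $Y=|\cP_f\cap J_C|$, \eqref{eq:bulkone} gives $\E Y\asymp s^2\alpha_4(R)$. By \eqref{eq:bulktwo} and quasi-multiplicativity, $\E Y^2\le Cs^2\alpha_4(R)\sum_{d\le s}d\,\alpha_4(d)\le C\alpha_4(R)\,s^2 m(s)$, using the power growth \eqref{eq:massgrowth}. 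Cauchy--Schwarz then gives $\Prob(Y>0)\ge c\,s^2\alpha_4(R)/m(s)=c\,\alpha_4(R)/\alpha_4(s)\asymp\alpha_4(s,R)$. Summing over cells yields $\E N_s(\cP_f)\ge c(R/s)^2\alpha_4(s,R)\asymp m(R)/m(s)$. For $\cS_f$ I would run the same computation using \eqref{eq:biasedinfluence} and \eqref{eq:paircompare}, which compare the spectral one- and two-point functions with the pivotal ones up to constants depending on $p_0$.

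The entropy lower bound is the main step. I would mimic Theorem~\ref{thm:guards} at block level, using Lemma~\ref{lem:blockguards}. For each cell $C$, take as guard $E_C$ the event that an annulus around $B_C$, from radius $2s$ to $3s$ (clipped to $Q$), contains a closed circuit, or open circuit in the appropriate sense. Such a circuit makes $f$ independent of the block $J_C$; near the boundary I would use half-annulus arcs that cut $J_C$ off from the crossing. By RSW this event has probability at least $q>0$, uniformly in the cell and the parameter profile. The annuli of cells in a fixed color class of a bounded coloring of the depth-$k$ grid (say cells spaced $\ge 8s$ apart) are pairwise disjoint and disjoint from the other blocks. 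Lemma~\ref{lem:blockguards} then gives, for every pattern $z$, $\Prob(Z_s(A)\supseteq z)\le(1-q)^{|z|/K}$, with $K$ the number of colors; in particular $\Prob(Z_s(A)=z)\le(1-q)^{|z|/K}$. Taking $-\log_2$ and averaging gives $H(Z_s(A))\ge c_q\E N_s(A)$.

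The obstacle I expect is making the circuit guards legitimate at every retained scale and location. The events must be measurable outside every block, must really make $f$ independent of $J_C$ for cells touching the sides or corners of $Q$ (where the crossing can enter through the boundary), and must have RSW probability bounded below uniformly in the near-critical regime below $L_\varepsilon$. This last point uses the lattice-parallelogram covering of Section~\ref{sec:inputs}. At the coarsest nonroot scales, $s\asymp R$, the annuli do not fit inside $Q$. There I would use the trivial bound that $H(Z_s(A))\ge H(\one_{A\ne\varnothing})$, which is bounded below for $\cP_f$. For $\cS_f$ I would instead use the two-cell split: with positive probability the spectral sample meets exactly one of two halves, which follows from a symmetric guard that makes one half irrelevant. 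The bounded number of such scales is then absorbed into constants.
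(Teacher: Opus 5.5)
Your proposal is correct and is essentially the paper's proof. Closed-circuit guards in a bounded coloring are fed into Lemma~\ref{lem:blockguards} to give the atom bound, the first- and second-moment estimates from Lemmas~\ref{lem:bulkmoments} and~\ref{lem:biasedpair} give the occupancy lower bound, and the root and the boundedly many scales $s\asymp R$ are handled separately by RSW crossing events. The paper avoids the boundary obstacle you flag by guarding only cells in the central rectangle $[a/4,3a/4]\times[b/4,3b/4]$ (the central occupied pattern is a function of $Z_s(A)$), and at the coarse scales it uses, for both sets, the single indicator of $A$ meeting a fixed depth-one cell: this is bounded below by a second moment on an interior sub-square (which is also how your cell count must be run at $s=R/2$, where no dyadic cell avoids $\partial Q$) and above by a bottom-strip open crossing together with \eqref{eq:blockvariance}.
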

\begin{proof}
The entropy and occupancy upper bounds were proved in Section~\ref{sec:upper}. We first prove the lower bounds when $s\le R/(32\kappa)$. Translate geometric coordinates so that $Q=[0,a]\times[0,b]$; the resulting lattice phase is covered by the uniform estimates above. Retain only cells in the central rectangle
\[
 Q^{\mathrm{mid}}=[a/4,3a/4]\times[b/4,3b/4].
\]
At these dyadic levels their number is $(R/(2s))^2$. They contain no projected exterior centers.

For each such cell $C$ with center $z_C$, require a closed lattice circuit in the square annulus with radii $s$ and $2s$ about $z_C$. At sufficiently large fixed $s_*$ this guard lies outside $J_C$, uses only coordinates in $z_C+[-3s,3s]^2$, and encloses every tile of $J_C$. Uniform RSW and positive association give probability at least $q>0$, by joining long-direction closed crossings of the four overlapping strips forming the annulus. These are center-line closed-site circuits on the triangular lattice and closed dual-bond circuits on the square lattice. They are disjoint from open tile paths and make the crossing independent of $J_C$.

The enlarged boxes lie inside $Q$ by the fine-scale cutoff. Color the cell indices modulo an integer $b_\kappa>6\kappa$ in each direction. Centers of distinct cells of one color have $\ell^\infty$ separation greater than $6s$, so their enlarged boxes are disjoint. Each box contains both its block $J_C$ and its guard support, and the guard avoids its own block. Within one color, therefore, every guard is measurable outside every selected block, and the guard supports are disjoint. Lemma~\ref{lem:blockguards} applies.

Let $W_s(A)$ record the occupied central cells. For each fixed pattern $w$, one of the $b_\kappa^2$ colors contains at least $|w|/b_\kappa^2$ of its occupied cells. Apply \eqref{eq:allhit} to those cells. This gives the atom bound
\[
 \Prob(W_s=w)\le(1-q)^{|w|/b_\kappa^2}.
\]
The color may be chosen separately for each deterministic atom. Taking logarithmic reciprocals and averaging yields
\begin{equation}\label{eq:entropybulk}
 H(Z_s(A))\ge H(W_s(A))\ge c\E|W_s(A)|.
\end{equation}

Each central cell contains $\asymp_\kappa s^2$ coordinates, after increasing the fixed cutoff $s_*$ if necessary. Put $X_C=|A\cap J_C|$. Lemma~\ref{lem:bulkmoments}, the one-coordinate identity \eqref{eq:biasedinfluence}, and Lemma~\ref{lem:biasedpair} imply
\[
 \E X_C\ge cs^2\alpha_4(R),\qquad
 \E X_C^2\le Cs^2\alpha_4(R)
             \left(1+\sum_{t\le Cs\ \mathrm{dyadic}}t^2\alpha_4(t)\right).
\]
The second estimate counts neighbors of each coordinate in dyadic distance shells; the diagonal and bounded microscopic distances contribute the initial one. The growth bound \eqref{eq:massgrowth} and fixed-factor regularity give $1+\sum_{t\le Cs}t^2\alpha_4(t)\le Cm(s)$ for this dyadic sum. Cauchy--Schwarz therefore gives
\[
 \Prob(X_C>0)\ge\frac{(\E X_C)^2}{\E X_C^2}
 \ge c\frac{s^2\alpha_4(R)}{m(s)}.
\]
Summing over the central cells proves $\E|W_s(A)|\ge cm(R)/m(s)$. Equation~\eqref{eq:entropybulk} proves both fine-scale lower bounds.

There are only a bounded number, depending on $\kappa$, of remaining nonroot scales. Let $C_0$ be the upper-left cell at depth one and let $Y=\one_{\{A\cap J_{C_0}\ne\varnothing\}}$. Applying the preceding first- and second-moment argument to a fixed proportional square strictly inside $C_0$ gives $\Prob(Y=1)\ge a_0>0$. An open left--right crossing confined to a bottom strip of height $b/4$ has probability at least $b_0>0$ by uniform RSW. For sufficiently large $R_0$, its coordinate support, including the fixed tile padding, is disjoint from $J_{C_0}$. This event forces $f=1$ irrespective of that block, so it implies $Y=0$ for pivotals. For the spectral sample, \eqref{eq:blockvariance} gives $\Prob(Y=1)\le1-b_0$, because the conditional variance vanishes on the strip event. Thus $H(Y)\ge c>0$. Every nonroot dyadic pattern determines $Y$, while $m(R)/m(s)\asymp1$ at the remaining scales. This proves their entropy lower bounds; their occupancy lower bounds follow from $\Prob(Y=1)\ge a_0$.

At the pivotal root, nonemptiness has probability at least $a_0$. Two open left--right crossings in top and bottom strips force the pivotal set to be empty, since one path survives any single-coordinate flip. Their coordinate supports are disjoint for sufficiently large $R_0$, so uniform RSW and independence give positive probability for this event. Root pivotal occupancy consequently has entropy bounded below by a positive constant. For the spectral sample the empty atom has probability $\widehat f(\varnothing)^2=(\E f)^2$, proving \eqref{eq:rootexact}.
\end{proof}

\begin{proof}[Proof of Theorems~\ref{thm:main} and~\ref{thm:scales}]
Lemmas~\ref{lem:reflection} and \ref{lem:bulkmoments}, together with Theorem~\ref{thm:armcriterion}, prove the influence comparisons and the upper bounds. The microscopic guards of Corollary~\ref{cor:guardcross} give $H(\cS_f)\ge cJ(f)$. Theorem~\ref{thm:pivlower} gives $H(\cP_f)\ge h_2(p_0)I(f)-1$ in the triangular regime, and $I(f)-1$ in the critical bond regime. The lower growth of $m(R)$ and \eqref{eq:meancomparison} allow the one to be absorbed by increasing $R_0$. Finally, Theorem~\ref{thm:mesoscopic} supplies all the resolution statements.
\end{proof}

\begin{corollary}[Information between separated resolutions]\label{cor:resolution_information}
There is a fixed dyadic $K>1$ such that, for either random set in Theorem~\ref{thm:scales} and retained dyadic scales $s,t$ with $t\ge Ks$,
\[
 H(Z_s(A)\mid Z_t(A))\asymp\frac{m(R)}{m(s)}.
\]
Moreover, uniformly along admissible rectangles and retained scales $s=s(R)\to\infty$,
\[
 \frac{H(A\mid Z_s(A))}{H(A)}=1-O\bigl(m(s)^{-1}\bigr)\longrightarrow1.
\]
\end{corollary}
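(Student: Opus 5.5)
For the first claim, I would bound $H(Z_s\mid Z_t)$ from both sides using the tools already in hand. The upper bound follows from the chain rule and the refinement cost. Since $Z_t$ is a coarsening of $Z_s$ in the dyadic tree, we have $H(Z_s\mid Z_t)=H(Z_s)-H(Z_t)\le H(Z_s)$. Theorem~\ref{thm:mesoscopic} then gives the upper bound $Cm(R)/m(s)$.

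For the lower bound, I would write $H(Z_s\mid Z_t)=H(Z_s)-H(Z_t)$. I would bound this below by $c_1m(R)/m(s)-C_2m(R)/m(t)$, using the matching bounds of Theorem~\ref{thm:mesoscopic} at scale $s$ and the upper bound of Theorem~\ref{thm:armcriterion} at scale $t$. The growth estimate \eqref{eq:massgrowth} gives $m(t)\ge c(t/s)^\epsilon m(s)\ge cK^\epsilon m(s)$. Choosing the dyadic $K$ large enough that $C_2/(cK^\epsilon)\le c_1/2$ leaves at least $(c_1/2)m(R)/m(s)$.

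One case needs care: $t=R$, the root. There $H(Z_R)\le1$, while $m(R)/m(s)\ge cK^\epsilon$ is bounded below, so the subtraction still works once $K$ is large. This handles the spectral root as well, since \eqref{eq:rootexact} is at most one. All scales involved are retained, so the constants are those of Theorems~\ref{thm:scales} and~\ref{thm:armcriterion}. Uniformity in the rectangle is inherited from those theorems.

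For the second claim, I would again use the coarsening identity $H(A\mid Z_s(A))=H(A)-H(Z_s(A))$. By Theorem~\ref{thm:main}, $H(A)\ge cm(R)$. By Theorem~\ref{thm:armcriterion} (or Theorem~\ref{thm:scales}), $H(Z_s(A))\le Cm(R)/m(s)$. The ratio $H(Z_s)/H(A)$ is therefore $O(m(s)^{-1})$, so the displayed ratio is $1-O(m(s)^{-1})$. Since $m(s)\ge cs^\epsilon$ by \eqref{eq:massgrowth}, this tends to zero as $s\to\infty$. At the root the numerator $H(Z_R)\le 1$ is also covered, since $m(R)^{-1}\le m(s)^{-1}$.

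The main obstacle is only bookkeeping: checking that $Z_t$ is a function of $Z_s$ for dyadic $t\ge s$ under the boundary conventions of Section~\ref{sec:upper}. This holds because the cells are nested bisections. Once that is settled, the constant $K$ depends only on the constants of the preceding theorems, and hence only on the fixed parameters.
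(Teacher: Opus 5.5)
Your proposal is correct and matches the paper's proof: you use the coarsening identity $H(Z_s\mid Z_t)=H(Z_s)-H(Z_t)$, set the matching lower bound at $s$ against the upper bound at $t$, invoke the power growth of $m$ to choose $K$, and then divide $H(A\mid Z_s)=H(A)-H(Z_s)$ by $H(A)\asymp m(R)$. Your separate treatment of the root is harmless but not needed, since the upper estimate of Theorem~\ref{thm:armcriterion} already applies when $t=R$.
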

\begin{proof}
Nesting makes $Z_t$ a function of $Z_s$, so the first conditional entropy equals $H(Z_s)-H(Z_t)$. Let $c_1,C_1$ be the entropy comparison constants and $c_2,\eta>0$ the constants in \eqref{eq:massgrowth}. Choosing $K$ so that $C_1/(c_2K^\eta)\le c_1/2$ gives
\[
 H(Z_s)-H(Z_t)\ge\frac{c_1m(R)}{m(s)}-\frac{C_1m(R)}{m(t)}\ge\frac{c_1m(R)}{2m(s)}.
\]
The upper bound follows from $H(Z_s\mid Z_t)\le H(Z_s)$; the entropy upper estimate also applies when $t=R$. Finally, $Z_s$ is a function of $A$, whence $H(A\mid Z_s)=H(A)-H(Z_s)$. Divide by $H(A)\asymp m(R)$ and use Theorem~\ref{thm:scales} and the lower power growth of $m$.
\end{proof}

\section{Local geometry and spectral coercivity}\label{sec:localgeometry}

From this section onwards, lattice statements concern triangular-lattice site percolation unless stated otherwise.

\subsection{Pivotal sets contain no interior triangle}\label{sec:pivotal}

\begin{lemma}\label{lem:forest}
The subgraph induced by the full interior pivotal tiles is a linear forest: a disjoint union of paths, allowing isolated vertices. In particular, it contains no triangle.
\end{lemma}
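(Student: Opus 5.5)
The plan is to work configuration by configuration. Two facts do all the work: every pivotal site lies on every simple crossing of the appropriate colour, and an edge between two pivotal tiles can be used as a shortcut along such a crossing. Fix a configuration $\omega$ with $\cP_f(\omega)\ne\varnothing$, and suppose first that $f(\omega)=1$. By the monotonicity observation in the proof of Theorem~\ref{thm:pivlower}, every pivotal site is open. Moreover, a site $x$ is pivotal exactly when closing it destroys every open left--right crossing of $Q$. Hence every simple open left--right crossing (simple tile path) $\gamma$ passes through every pivotal site. Fix one such $\gamma$. Its linear order then induces a linear order $x_1<x_2<\dots<x_n$ on $\cP_f(\omega)$.

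The key step is that any two pivotal sites $u<v$ whose tiles are adjacent must be consecutive in this order. Suppose instead that some pivotal $w$ satisfies $u<w<v$. Form the tile walk that follows $\gamma$ from its left end to $u$, steps directly from $u$ to $v$, and then follows $\gamma$ to its right end. Since $u$ and $v$ are full interior tiles, their common edge lies inside $Q$. The new walk therefore still realizes an open left--right crossing of $Q$ in the geometric tile convention, and it avoids $w$. Loop-erasing it gives a simple open crossing that misses $w$, which contradicts the pivotality of $w$. So every edge of the induced subgraph on full interior pivotal tiles joins consecutive elements $x_k,x_{k+1}$. That subgraph is thus a subgraph of the path $x_1x_2\cdots x_n$, hence a linear forest, and in particular contains no triangle. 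The case $f(\omega)=-1$ is identical after exchanging colours. There all pivotal sites are closed and lie on every simple closed top--bottom crossing, and the same shortcut argument applies to closed top--bottom paths. On the triangular lattice, closed top--bottom paths are exactly the obstructions to an open left--right crossing.

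The main obstacle is justifying the two geometric facts carefully in the tile convention of \cite{GPS}, where boundary tiles are clipped by $Q$. First, I need that ``pivotal'' means every crossing uses $x$: for $f=1$, flipping $x$ to closed yields $f=-1$ exactly when no open crossing avoids $x$. Second, I need that the shortcut through the shared edge of two adjacent tiles is a legitimate crossing curve inside $Q$. This second point is where the restriction to \emph{full interior} tiles is used. For two clipped boundary tiles, their adjacency in the lattice need not give a connection inside $Q$, so the shortcut could fail; excluding such tiles avoids the issue. I would also check the routine point that a tile walk joining the two distinguished sides can be loop-erased to a simple one without enlarging its set of tiles, so that the avoided site $w$ stays avoided.
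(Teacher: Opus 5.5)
Your proposal is correct and takes the same approach as the paper. You order the pivotals along a fixed simple crossing of the appropriate colour, and you use the shortcut through the shared edge of two adjacent full interior pivotal tiles to show that such tiles must be consecutive in that order; the case where the crossing fails is handled by Hex duality with a closed top--bottom crossing, and your remark that the full-interior restriction is what keeps the shortcut inside the domain matches the paper's use of that hypothesis.
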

\begin{proof}
Suppose first that the open left--right crossing occurs. Every pivotal is open and lies on every open crossing. Choose a simple crossing path. If three mutually adjacent interior sites on this path were pivotal, order their visits along the path. The edge joining the first to the third would replace the segment between them and avoid the middle site. All three tiles and the joining edge are inside the domain. The resulting open walk contains a crossing path avoiding that pivotal, a contradiction.

More generally, list all pivotal vertices in the order in which the chosen simple path visits them. An edge joining two nonconsecutive vertices in that list would give the same shortcut and avoid an intervening pivotal. Every edge between full interior pivotals therefore joins consecutive vertices in this order. Their induced graph is a subgraph of a path and hence a linear forest.

If the open left--right crossing fails, Hex duality gives a closed top--bottom crossing. This is the complementary event, so its pivotal set is the same. Apply the preceding argument to a simple closed crossing path. The same conclusion holds in either case.
\end{proof}

The spectral separator below uses only the triangle prohibition. No assertion about cut boundary tiles is required.

\subsection{A six-cycle coercivity inequality}\label{sec:coercivity}

Let $C_6$ have vertex set $\mathbb Z/6\mathbb Z$, with its cyclic edges, and let $\cI$ be its independent subsets. There are eighteen: the empty set, six singletons, nine pairs, and two alternating triples. On six fair signs $z=(z_0,\ldots,z_5)$, let $Q$ be the orthogonal Walsh projection onto
\[
 \operatorname{span}\{\chi_A:A\in\cI\}.
\]
Define
\[
 L=\left\{z:\sum_{i=0}^5\one_{\{z_i\ne z_{i+1}\}}\le2\right\},
 \qquad H=L^c.
\]
Each of $L$ and $H$ has thirty-two configurations. The configurations in $L$ have an empty, full, or single cyclic interval of plus signs.

\begin{lemma}\label{lem:rank}
If $q\in\operatorname{Ran}Q$ vanishes on $L$, then $q=0$.
\end{lemma}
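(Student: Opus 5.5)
The plan is to reduce the injectivity statement to a handful of small linear-algebra checks, using the cyclic symmetry of $C_6$. Rotation $\sigma:z_i\mapsto z_{i+1}$ preserves both $\cI$ and $L$. Hence the evaluation map
\[
 \mathrm{ev}:\operatorname{Ran}Q\to\mathbb C^L
\]
commutes with the induced action of $\mathbb Z/6\mathbb Z$. An element $q$ vanishing on $L$ decomposes into rotation eigencomponents $q_\omega$ with $\omega^6=1$, and each $q_\omega$ also vanishes on $L$. It therefore suffices to prove injectivity on each eigenspace separately.

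On the domain side, I sort the eighteen independent sets into rotation orbits:
\begin{itemize}
\item the empty set, an orbit of size $1$;
\item the singletons, size $6$;
\item the pairs at cyclic distance two, size $6$;
\item the antipodal pairs, size $3$;
\item the alternating triples, size $2$.
\end{itemize}
An orbit of size $d$ contributes one dimension to the $\omega$-eigenspace exactly when $\omega^d=1$. This gives domain dimensions $5,3,3,2$ for $\omega=1$, $\omega=-1$, $\omega$ a primitive cube root, and $\omega$ a primitive sixth root respectively.

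On the target side, $L$ consists of the two constant configurations and the five orbits $\{\mathcal I_k\}$, $k=1,\dots,5$, of configurations whose plus set is a cyclic interval of length $k$. An $\omega$-eigenfunction on $L$ is determined by its values at the constants (allowed only when $\omega=1$) and at one representative $J_k=\{0,\dots,k-1\}$ per length $k$. The restriction of $q_\omega$ to these representatives is therefore a linear map from a space of dimension at most $5$ into a space of dimension $7$ or $5$.

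Next I write each eigenspace basis element as an orbit sum
\[
 \sum_j\omega^{-j}\chi_{\sigma^jA}.
\]
For a representative $A$ of each orbit I evaluate it at the representatives. For this the key formula is
\[
 \chi_B(J_k)=(-1)^{|B\setminus J_k|},
\]
together with $\chi_B\equiv1$ at the all-plus configuration and $\chi_B=(-1)^{|B|}$ at the all-minus configuration. This produces explicit small matrices with entries in $\mathbb Z[\omega]$. For $\omega=1$ the matrix is $7\times5$; for the other frequencies it is $5\times3$ or $5\times2$.

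The substantive step, and the one I expect to be the main obstacle, is checking that each of these matrices has full column rank. I would exhibit, for each frequency, a nonvanishing maximal minor. The natural rows to use are the interval lengths $k=1,2,3$, supplemented by the constants when $\omega=1$. I would use reflection symmetry $z_i\mapsto z_{-i}$, which exchanges $\omega$ and $\bar\omega$, so that only $\omega=1,-1,e^{2\pi i/3},e^{\pi i/3}$ need to be computed. As a sanity check, $\omega=-1$ should be handled by the alternating triple together with the singletons, since $\chi_{\{0,2,4\}}$ is visibly non-degenerate on short intervals.

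If a chosen minor happens to vanish, I would switch to the real formulation. There the target is the 32 points of $L$ and the domain is the 18-dimensional $\operatorname{Ran}Q$. I would run a direct elimination: first use the two constants and the six singleton intervals to isolate the degree-$\le1$ part, then use the length-two intervals for the distance-two pairs, and finally the length-three intervals for the antipodal pairs and triples. This would be organized as a triangular system by degree.

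Either route reduces the lemma to a finite computation with no remaining conceptual gap.
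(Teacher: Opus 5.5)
\textbf{There is a genuine gap.} Your rotation reduction is valid. The rotation $\sigma$ preserves $\cI$ and $L$, each eigencomponent of $q$ still vanishes on $L$, and your dimension counts ($5,3,3,2$ for the domain, $7$ or $5$ for the target) are right. But the reduction only restates the lemma one frequency at a time. Full column rank of those matrices is the entire content of the lemma, and you leave it unverified. Reducing to ``a finite computation'' is no progress, since the lemma is already a statement about an $18$-dimensional space.

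Worse, the verification you outline cannot succeed. For $k\le2$ an independent set meets $J_k$ in at most one site, so $\chi_B(J_k)=(-1)^{|B|}(1-2|B\cap J_k|)$. Summing over an orbit then has two consequences:
\begin{itemize}
\item at $\omega=1$, the all-minus row and the rows $k=1,2$ satisfy $R_0-2R_1+R_2=0$;
\item for $\omega\ne1$, the $k=2$ row is a scalar multiple of the $k=1$ row, and it is identically zero at $\omega=-1$.
\end{itemize}
So your ``natural'' minors all vanish: the constants with $k=1,2,3$ at $\omega=1$, and the rows $k=1,2,3$ at $\omega=-1$ and at the primitive cube roots.

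No other choice of rows among the constants and the intervals of length at most three can help. With $y_i=(1+z_i)/2$, the function $y_0y_3-y_1y_4$ is a nonzero element of $\operatorname{Ran}Q$. It vanishes at both constants and on every interval of length at most three, because an antipodal pair first fits inside an interval of length four.

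Your fallback has the same flaw. In the Walsh basis there is no triangularity by degree, since every $\chi_A$ equals $\pm1$ at every point. In the indicator basis, where triangularity does hold, the interval lengths are misassigned:
\begin{itemize}
\item a length-two interval contains no independent pair;
\item a length-three interval contains only the distance-two pair formed by its endpoints;
\item antipodal pairs need length four, and alternating triples need length five.
\end{itemize}

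The missing step is the change of basis you gesture at but never make. Since $\cI$ is closed under subsets, $\operatorname{Ran}Q$ is spanned by the monomials $y_A$, $A\in\cI$. At the configuration with plus set $J$ one has $q=\sum_{A\in\cI,\,A\subseteq J}b_A$. Now evaluate successively at:
\begin{itemize}
\item the empty plus set;
\item the six singletons;
\item the six intervals of length three;
\item three intervals of length four;
\item two intervals of length five, one omitting an odd site and one omitting an even site.
\end{itemize}
Each evaluation introduces exactly one new coefficient, so all eighteen coefficients vanish in turn. This is the paper's argument. It needs no eigenspace decomposition, and it shows why intervals of lengths four and five are indispensable.
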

\begin{proof}
Put $y_i=(1+z_i)/2$. Since the family $\cI$ is closed under subsets, the span of its Walsh characters equals the span of the monomials $y_A=\prod_{i\in A}y_i$, $A\in\cI$. Write $q=\sum_{A\in\cI}b_Ay_A$.

Evaluate first at the empty open set, and then at each singleton open set. This gives $b_\varnothing=0$ and all singleton coefficients zero. A three-site cyclic interval contains exactly one independent pair, its two endpoints; evaluating on all six such intervals kills the six distance-two pair coefficients. A four-site interval then kills its opposite pair, since the other independent coefficients in that interval have already vanished. Three such intervals kill the remaining three pairs. Finally, a five-site interval contains exactly one of the two alternating triples. Intervals omitting a site of each parity kill both triple coefficients. All evaluations were on $L$, so $q=0$.
\end{proof}

\begin{proposition}\label{prop:coercivity}
Every function $g$ on six signs which vanishes on $L$ satisfies
\begin{equation}\label{eq:coercivity}
 \|(\Id-Q)g\|_2^2\ge\astar\|g\|_2^2.
\end{equation}
The constant in \eqref{eq:astar} is optimal on this function space.
\end{proposition}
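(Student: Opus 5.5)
The plan is to turn \eqref{eq:coercivity} into a finite eigenvalue problem and solve it with the symmetries of the six-cycle. Let $W=\{g:g|_L=0\}$, a $32$-dimensional space. For $g\in W$ we have $\|(\Id-Q)g\|_2^2=\|g\|_2^2-\|Qg\|_2^2$, so the optimal constant is
\[
 \astar=1-\sup_{0\ne g\in W}\frac{\|Qg\|_2^2}{\|g\|_2^2}.
\]
The supremum equals the squared cosine of the smallest principal angle between $W$ and $\operatorname{Ran}Q$. Equivalently, it is the top eigenvalue of $Q\one_HQ$ on the $18$-dimensional space $\operatorname{Ran}Q$, since $\sup_{g\in W}\langle q,g\rangle^2/\|g\|^2=\|\one_Hq\|^2$.

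In the orthonormal basis $\{\chi_A\}_{A\in\cI}$, this operator has matrix $\delta_{AB}-G_{AB}$, where
\[
 G_{AB}=\E[\chi_A\chi_B\one_L]=\E[\chi_{A\triangle B}\one_L].
\]
Hence $\astar=\lambda_{\min}(G)$, and it remains to compute the smallest eigenvalue of this $18\times18$ Gram matrix of the characters restricted to $L$. Lemma~\ref{lem:rank} says precisely that no nonzero $q\in\operatorname{Ran}Q$ vanishes on $L$, so $G$ is positive definite. This already gives some positive constant; the work is in identifying it exactly.

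To compute the spectrum of $G$, I would use two symmetries that preserve both $L$ and $\cI$: the dihedral group $D_6$ acting on the cycle, and the global flip $z\mapsto -z$.
\begin{itemize}
\item The global flip multiplies $\chi_A$ by $(-1)^{|A|}$ and fixes $L$. Therefore $G_{AB}=0$ whenever $|A\triangle B|$ is odd, and $G$ splits into an even block (the empty set, the nine pairs, the two triples? no: the empty set and the nine pairs, $10$ supports) and an odd block (the six singletons and the two alternating triples, $8$ supports).
\item Each block splits further under rotation into Fourier modes $k=0,\dots,5$, with reflection pairing the modes $k$ and $-k$.
\end{itemize}
The entries of $G$ only require the numbers $\E[\chi_C\one_L]$ for the sets $C=A\triangle B$. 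Since $L$ consists of the constant configurations and the $30$ single cyclic intervals, these are explicit counts: sum $\prod_{i\in C}z_i$ over the intervals and divide by $64$.

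After this decomposition every block has size at most about three: the odd part in rotation mode $k$ involves the singleton orbit together with the triple orbit when $k\in\{0,3\}$. I would write down each small block, find its eigenvalues, and check that the minimum is $(8-\sqrt{53})/16$. This number is the smaller root of
\[
 256x^2-256x+11=0,
\]
so I expect it to come from a $2\times2$ block with trace $1$ and determinant $11/256$. The most likely source is the odd sector at $k=0$ or $k=3$, where a singleton mode couples to a triple mode; but I will compute every block to confirm that none has a smaller eigenvalue. The bound $\astar>1/32$ then follows from $\sqrt{53}<7.5$.

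For optimality, take the minimizing eigenvector $q$ of $G$ and set $g=\one_Hq\in W$. Then
\[
 \|g\|^2=1-\lambda_{\min},\qquad \langle g,q\rangle=1-\lambda_{\min},
\]
and since $q$ is an eigenvector of $Q\one_HQ$, one gets $Qg=(1-\lambda_{\min})q$. Thus $\|Qg\|^2/\|g\|^2=1-\lambda_{\min}$, and equality holds in \eqref{eq:coercivity}.

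The main obstacle is the bookkeeping of the interval sums $\E[\chi_C\one_L]$ across all the symmetry classes of $C$, and making sure no block has a smaller eigenvalue. I would cross-check the full spectrum against the trace of $G$, which is $18\cdot 32/64=9$.
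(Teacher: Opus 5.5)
Your proposal follows the paper's proof: the sharp constant is $\lambda_{\min}$ of the Gram matrix $G_{AB}=\E[\chi_{A\triangle B}\one_L]$ over $A,B\in\cI$, computed blockwise after the even/odd split under $z\mapsto-z$ and the rotation modes, with sharpness from $g=\one_Hq$ for a bottom eigenvector $q$. One correction to your heuristic: the minimum $(32-4\sqrt{53})/64$ lies in the even block, in the rotation-invariant mode where the constant and the symmetrized distance-two pairs couple to the symmetrized opposite pairs, while the odd singleton--triple blocks give only $(40\pm8\sqrt3)/64$ and $(24\pm8\sqrt3)/64$, so you do need to compute every block as you planned.
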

\begin{proof}
Lemma~\ref{lem:rank} already implies the inequality with some positive constant by compactness of the unit sphere of functions supported on $H$. We compute the best constant.

Let $M$ be the $32\times18$ matrix
\[
 M_{z,A}=\chi_A(z),\qquad z\in L,\quad A\in\cI,
\]
and put $G=M^{\mathsf T}M$. If $q=\sum_{A\in\cI}u_A\chi_A$, then
\begin{equation}\label{eq:gramnorm}
 \|q\|_2^2=\|u\|^2,\qquad
 \|\one_Lq\|_2^2=\frac1{64}u^{\mathsf T}Gu.
\end{equation}
We record the small-block calculation to specify the constant without numerical approximation.

The symmetry $z\mapsto-z$ separates even and odd degrees. Order the even block by the constant and the six distance-two pairs, followed by the three opposite pairs. Then
\[
 G_{\rm even}=32I_{10}+
 \begin{pmatrix}0&K\\K^{\mathsf T}&0\end{pmatrix},
 \qquad K^{\mathsf T}K=512I_3+112J_3.
\]
Here $J_3$ is the all-ones matrix. The first row of $K$ is $(-4,-4,-4)$; the other rows are two copies of each permutation of $(-4,12,12)$. The eigenvalues of this block are
\[
 32\pm4\sqrt{53},\quad
 32\pm16\sqrt2\ \text{(each twice)},\quad
 32\ \text{(four times)}.
\]

In the odd block, the singleton matrix is circulant with first row $(32,12,0,-4,0,12)$; its eigenvalues are $52,48,16,12,16,48$. The alternating triples have block
\[
 \begin{pmatrix}32&-4\\-4&32\end{pmatrix}.
\]
A singleton couples with coefficient zero to its own parity triple and with coefficient $-4$ to the other. Only the constant and alternating singleton modes couple to these triples, giving the two blocks
\[
 \begin{pmatrix}52&-4\sqrt3\\-4\sqrt3&28\end{pmatrix},
 \qquad
 \begin{pmatrix}12&4\sqrt3\\4\sqrt3&36\end{pmatrix}.
\]
Thus the odd eigenvalues are $40\pm8\sqrt3$, $24\pm8\sqrt3$, and $48,16$ each twice. The least eigenvalue of $G$ is $32-4\sqrt{53}$.

By \eqref{eq:gramnorm}, $\|\one_Lq\|_2^2\ge\astar\|q\|_2^2$ on $\operatorname{Ran}Q$. If $g=\one_Hg$, then
\[
 |\langle g,q\rangle|=|\langle g,\one_Hq\rangle|
 \le\sqrt{1-\astar}\,\|g\|_2\|q\|_2.
\]
Take $q=Qg$ and use Pythagoras to obtain \eqref{eq:coercivity}. For sharpness, choose a least-eigenvalue vector $q$ in \eqref{eq:gramnorm} and put $g=\one_Hq$. Then $Qg=(1-\astar)q$, and equality follows.
\end{proof}

\section{Exterior conditioning and spectral face density}\label{sec:faces}
\subsection{Preservation under exterior operators}\label{sec:conditioning}

Consider a finite site-connection graph with a distinguished nonterminal site $v$ whose only neighbors are $v_0,\ldots,v_5$, with all cyclic ring edges present. Let $W_v$ be this seven-site wheel. Let $f$ be the sign of the event connecting two fixed terminal sets. For a site $v$, define the sign derivative
\[
 \partial_v f(\omega_{v^c})
 =\E_v[\omega_vf]
 =\frac{f(1,\omega_{v^c})-f(-1,\omega_{v^c})}{2}.
\]

\begin{lemma}\label{lem:zero}
For every exterior configuration, $\partial_vf$ vanishes when the six ring signs lie in $L$.
\end{lemma}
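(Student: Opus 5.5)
The plan is to show directly that $f(1,\omega_{v^c})=f(-1,\omega_{v^c})$ whenever the ring pattern lies in $L$. Then $\partial_vf=0$. Here $f$ is the sign of the event that an open path joins the two fixed terminal sets. Since $v$ is nonterminal, opening $v$ can only create connections through $v$ itself, and the event is increasing. So it suffices to show the following: if an open path between the terminal sets exists with $v$ open, then one exists with $v$ closed.

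Take such a path and assume it passes through $v$; otherwise there is nothing to prove. Since $v$ is not terminal, it is an interior vertex of the path (after loop erasure, a simple path). Its only neighbors are $v_0,\ldots,v_5$, so the path enters $v$ from some open ring site $v_a$ and leaves to some open ring site $v_b$ with $a\ne b$.

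Now use the assumption that the ring signs lie in $L$. The open ring sites then form the empty set, the full ring, or a single cyclic interval. The empty case is impossible, since $v_a$ and $v_b$ are open. In the other two cases, $v_a$ and $v_b$ lie in a common cyclic interval of open ring sites. The ring edges $v_iv_{i+1}$ are all present in the graph, so the interval itself supplies an open path from $v_a$ to $v_b$ along the ring that avoids $v$. Replacing the segment $v_a\,v\,v_b$ by this ring arc gives an open walk between the terminal sets avoiding $v$. That walk contains a path, so the connection survives with $v$ closed. This proves $f(-1,\cdot)=f(1,\cdot)$.

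No step here is really an obstacle. The one point to state carefully is that the argument uses only the local wheel structure: $v$ is nonterminal, its neighborhood is exactly the ring, and all cyclic ring edges exist. Whether the ring sites are terminal, and how they connect to the rest of the graph, does not matter, so the conclusion holds for every exterior configuration. I would also note the complementary reading. Because $L$ is closed under global sign reversal, the same statement holds with the roles of open and closed exchanged, although the increasing-event argument above already covers the claim.
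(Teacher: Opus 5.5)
Your proof is correct and takes the same approach as the paper: when the ring signs lie in $L$, the open ring sites form a single cyclic interval (or the whole ring), so a terminal path through the open center can be rerouted along an open ring arc avoiding $v$. The paper treats the case of no open neighbors separately; your use of monotonicity together with the two distinct open ring neighbors of $v$ on a simple path covers that case automatically.
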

\begin{proof}
In such a configuration, all open neighbors of $v$ are joined by an open ring path, unless there are no open neighbors. Any terminal path through the open center can therefore replace its visit to $v$ by an open ring path. The resulting walk contains a terminal path avoiding $v$. If there are no open neighbors, opening $v$ cannot create a terminal connection. Thus reversing $v$ never changes the event.
\end{proof}

Let $E_v$ average over the center sign, let $P_v=\Id-E_v$, and extend the ring projection $Q_v$ by the identity on all other coordinates. Set
\[
 T_v=P_v(\Id-Q_v).
\]
This is the Fourier projection onto supports containing $v$ and at least one pair of cyclically adjacent neighbors. Such supports contain at least one of the six elementary faces incident to $v$.

\begin{proposition}\label{prop:outside}
If $A$ is any linear operator acting only on coordinates outside $W_v$, then
\begin{equation}\label{eq:outside}
 \|T_vAf\|_2^2\ge\astar\|P_vAf\|_2^2.
\end{equation}
Consequently, for every spectral event $\mathcal H$ determined by $\cS_f\cap W_v^c$,
\begin{equation}\label{eq:conditional}
 \Prob\bigl(T_v(\cS_f)=1,\mathcal H\bigr)
 \ge\astar\Prob\bigl(v\in\cS_f,\mathcal H\bigr),
\end{equation}
where $T_v(S)=1$ means that $S$ contains an incident face at $v$.
\end{proposition}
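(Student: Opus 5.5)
The plan is to reduce \eqref{eq:outside} to Proposition~\ref{prop:coercivity}, applied fiberwise over the exterior coordinates, and then to read off \eqref{eq:conditional} from Parseval.

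\textbf{Step 1: a fiber decomposition.} Write the configuration as $(\omega_v,z,\omega_{W_v^c})$, with $z$ the six ring signs. Since $A$ acts only on $W_v^c$, it commutes with $E_v$, $P_v$ and $Q_v$, and hence with $T_v$. Thus $P_vAf=AP_vf$ and $T_vAf=(\Id-Q_v)AP_vf$. Now $P_vf=\omega_v\,\partial_vf(z,\omega_{W_v^c})$, and $A$ does not touch the factor $\omega_v$. So
\[
 P_vAf=\omega_v\,g,\qquad g=A\,\partial_vf,
\]
where $A$ is regarded as acting on functions of $(z,\omega_{W_v^c})$ in the exterior variables.

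\textbf{Step 2: the zero set survives $A$.} By Lemma~\ref{lem:zero}, for each fixed $z\in L$ the function $\omega_{W_v^c}\mapsto\partial_vf(z,\omega_{W_v^c})$ vanishes identically. Applying $A$ to the exterior variables alone therefore still gives zero. Concretely, expand $\partial_vf=\sum_{z'}\one_{\{z=z'\}}h_{z'}(\omega_{W_v^c})$ and note that $A$ acts on each $h_{z'}$ separately. Hence for every exterior configuration, $g(\cdot,\omega_{W_v^c})$ vanishes on $L$.

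\textbf{Step 3: apply coercivity fiberwise.} Multiplication by $\omega_v$ is unitary and commutes with $Q_v$, so it suffices to show $\|(\Id-Q_v)g\|_2^2\ge\astar\|g\|_2^2$. The operator $Q_v$ acts only on the ring coordinates. For each fixed exterior configuration, Proposition~\ref{prop:coercivity} applied to the six-sign function $g(\cdot,\omega_{W_v^c})$ gives the inequality on that fiber. Since $(\Id-Q_v)$ acts fiberwise, integrating over the exterior configuration gives \eqref{eq:outside}. The only point needing care is that Step~2 uses the zero set exactly; an approximate vanishing would not be enough.

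\textbf{Step 4: the spectral statement.} Let $\mathcal H$ be an event determined by $\cS_f\cap W_v^c$; it is a union of classes of exterior supports, indexed by a family $\mathcal B$ of subsets of $W_v^c$. Take $A$ to be the exterior Fourier projection onto characters $\chi_B$ with $B\in\mathcal B$. This operator acts only outside $W_v$, and the Fourier multiplier for a support $S$ is $\one_{\{S\cap W_v^c\in\mathcal B\}}$. By Parseval,
\[
 \|P_vAf\|_2^2=\Prob(v\in\cS_f,\mathcal H),\qquad
 \|T_vAf\|_2^2=\Prob(T_v(\cS_f)=1,\mathcal H),
\]
because $T_v$ is the projection onto supports containing $v$ together with an adjacent ring pair. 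Substituting into \eqref{eq:outside} gives \eqref{eq:conditional}.

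I expect no real obstacle. The subtle point is Step~2, namely that a general linear exterior operator, not only a conditional expectation, preserves the exact vanishing on $L$; this holds because $A$ acts separately on each ring-fixed fiber.
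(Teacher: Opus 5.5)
Your proposal is correct and takes the same route as the paper. The exact vanishing of $\partial_v f$ on $L$ survives any exterior operator fiber by fiber, and the identities $P_vAf=\omega_v\,\partial_vAf$ and $T_vAf=\omega_v(\Id-Q_v)\partial_vAf$ reduce \eqref{eq:outside} to Proposition~\ref{prop:coercivity} on each exterior fiber; \eqref{eq:conditional} then follows by taking $A$ to be the diagonal Walsh projector onto the exterior supports in $\mathcal H$ and applying Parseval.
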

\begin{proof}
The zero in Lemma~\ref{lem:zero} holds as an identity in every exterior coordinate. An operator on those coordinates preserves it. Hence $\partial_vAf$ still vanishes on $L$, fiber by fiber. Since
\[
 P_vAf=\omega_v\partial_vAf,
 \qquad T_vAf=\omega_v(\Id-Q_v)\partial_vAf,
\]
Proposition~\ref{prop:coercivity}, followed by integration over the exterior coordinates, gives \eqref{eq:outside}.

For \eqref{eq:conditional}, take $A$ to be the diagonal Walsh projector retaining exactly the exterior supports satisfying $\mathcal H$. Parseval identifies the two squared norms with the two probabilities. The function $Af$ need not be Boolean, monotone, or nonnegative.
\end{proof}

The whole wheel must remain unconditioned in \eqref{eq:conditional}. An exterior event for a single chosen face may involve the other ring coordinates and does not satisfy this hypothesis. The estimate concerns the union of the six incident-face events.

\subsection{Conditional Bernoulli bounds and face density}\label{sec:amplification}

Take a family of disjoint full wheels $W_{v_1},\ldots,W_{v_k}$. For the spectral sample define
\[
 x_j=\one_{\{v_j\in\cS_f\}},\qquad
 y_j=\one_{\{T_{v_j}(\cS_f)=1\}},
 \qquad X=\sum_jx_j,\quad Y=\sum_jy_j.
\]
We have $y_j\le x_j$.

\begin{lemma}\label{lem:bernoulli}
Conditional on the complete center-inclusion pattern $(x_1,\ldots,x_k)$, the variable $Y$ stochastically dominates a binomial random variable with $X$ trials and success probability $\astar$. In particular, for $t\ge0$ and $b(t)=1-\astar+\astar e^{-t}$,
\begin{equation}\label{eq:classmgf}
 \E\bigl[e^{-tY}b(t)^{-X}\bigr]\le1.
\end{equation}
\end{lemma}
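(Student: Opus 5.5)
The plan is to reveal the wheels one at a time and apply Proposition~\ref{prop:outside} at each step. The events we condition on will always be determined by the spectral sample outside the wheel currently being tested.

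Fix a center-inclusion pattern $\xi\in\{0,1\}^k$ and an index $j$ with $\xi_j=1$. Let $\mathcal F_j$ be any event determined by $\cS_f\cap W_{v_j}^c$, and apply \eqref{eq:conditional} to $\mathcal H=\mathcal F_j$. The natural choice is
\[
 \mathcal F_j=\{x_i=\xi_i\ \forall i\ne j\}\cap\{y_i=\epsilon_i\ \forall i<j\}.
\]
This event is admissible: the wheels are disjoint, so each $x_i$ and $y_i$ with $i\ne j$ depends only on $\cS_f\cap W_{v_i}\subseteq W_{v_j}^c$. Proposition~\ref{prop:outside} then gives
\[
 \Prob(y_j=1,x_j=1,\mathcal F_j)\ge\astar\Prob(x_j=1,\mathcal F_j).
\]
Since $y_j\le x_j$, this says that, conditional on the full pattern $x=\xi$ and on $y_1,\dots,y_{j-1}$, the event $y_j=1$ has probability at least $\astar$ whenever $\xi_j=1$. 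Conditioning events of zero probability can be discarded. When $\xi_j=0$ we have $y_j=0$, and that index is simply not a trial.

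With these sequential conditional bounds, the standard coupling gives the domination. I would construct uniforms $U_j$ so that $y_j=\one\{U_j\le p_j\}$, where $p_j\ge\astar$ is the conditional probability. Then $Y\ge\sum_{j:\xi_j=1}\one\{U_j\le\astar\}$, which is binomial with $X$ trials and success probability $\astar$ under the conditional law given $x=\xi$.

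For \eqref{eq:classmgf} I would argue inductively rather than through the coupling. Set $M_j=e^{-t\sum_{i\le j}y_i}\,b(t)^{-\sum_{i\le j}x_i}$. For each $j$ with $\xi_j=1$,
\[
 \E\bigl[e^{-ty_j}\mid\cdot\bigr]\le1-\astar+\astar e^{-t}=b(t),
\]
because $e^{-ty}$ is decreasing in $y$ and $t\ge0$. Hence $M_j$ is a supermartingale given $x=\xi$, and $\E[M_k\mid x=\xi]\le1$. Averaging over $\xi$ gives \eqref{eq:classmgf}.

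The main obstacle is checking that the conditioning set is exterior to $W_{v_j}$ while it still contains the center indicators of the other wheels together with the previously revealed $y_i$. This works only because each $x_i$ and $y_i$ is determined by the spectral sample within its own wheel, and the wheels are disjoint. Proposition~\ref{prop:outside} requires the whole of $W_{v_j}$ to stay unconditioned, and this ordering ensures that it does.
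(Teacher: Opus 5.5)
Your proposal is correct and follows essentially the same route as the paper. You reveal the markers sequentially and apply \eqref{eq:conditional} with an exterior event fixing all other center indicators and the preceding markers, which is admissible by disjointness of the wheels. You then use a successive coupling with independent uniforms for the domination, and iterated conditional expectation (your supermartingale) for \eqref{eq:classmgf}.
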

\begin{proof}
Fix a center pattern having positive probability and list its occupied centers. Reveal the corresponding $y_j$ in any deterministic order. At a currently occupied center $v_j$, the specified values of all other $x_i$, and every preceding success or failure of $y_i$, depend only on spectral coordinates outside $W_{v_j}$. This uses disjointness of the wheels. Proposition~\ref{prop:outside}, divided by the probability of the conditioning event together with $x_j=1$, gives
\[
 \Prob(y_j=1\mid x_1,\ldots,x_k,\text{preceding }y_i)\ge\astar
\]
whenever this conditional probability is defined. Successive coupling with independent uniform random variables gives the stochastic domination. Equivalently, iterated conditional expectation gives
\[
 \E[e^{-tY}\mid x_1,\ldots,x_k]\le b(t)^X,
\]
which proves \eqref{eq:classmgf}.
\end{proof}

\begin{proof}[Proof of Theorem~\ref{thm:finite}]
Use axial coordinates $(i,j)$ for the triangular lattice, with neighbor differences $\pm(1,0)$, $\pm(0,1)$, and $\pm(1,-1)$. Color the interior centers by $i+3j\pmod7$. The seven vertices of any wheel have all seven colors: the neighbor increments have residues $\pm1,\pm2,\pm3$. Any two sites at graph distance at most two lie in a common wheel. Thus distinct centers of the same color have distance at least three, and their wheels are disjoint. Write $X_c,Y_c$ for the preceding counts in color $c$, $1\le c\le7$, and put
\[
 X=\sum_cX_c=X_R,\qquad Y=\sum_cY_c.
\]
Each successful center has an incident face. Each face has at most three vertices, so
\begin{equation}\label{eq:Yfaces}
 Y\le3K_R(\cS_R).
\end{equation}
Apply H\"older's inequality to the seventh roots of the seven nonnegative variables in \eqref{eq:classmgf}. It gives
\begin{equation}\label{eq:globalmgf}
 \E\bigl[e^{-tY/7}b(t)^{-X/7}\bigr]\le1.
\end{equation}
No independence between color classes is used.

On $\{Y\le\theta X,\ X\ge m\}$, the random variable in \eqref{eq:globalmgf} is at least
\[
 \exp\left\{\frac{m}{7}\bigl(-t\theta-\ln b(t)\bigr)\right\}
\]
whenever the quantity in parentheses is positive. Markov's inequality and optimization over $t\ge0$ yield
\[
 \Prob(Y\le\theta X,\ X\ge m)
 \le e^{-mD(\theta\Vert\astar)/7}.
\]
Use \eqref{eq:Yfaces} to obtain \eqref{eq:density}. Taking $\theta=0$, or letting $t\to\infty$ in \eqref{eq:globalmgf}, proves \eqref{eq:noface}.
\end{proof}

\begin{lemma}\label{lem:fibers}
Put $q=1-\astar$, $F_R=\{K_R(\cS_R)=0\}$, and $E_R=\{\cS_R\subseteq V_R^\circ\}$. For every integer $n\ge1$,
\begin{equation}\label{eq:fibers}
 \Prob(F_R,\ |\cS_R|=n,\ E_R)
 \le7q^{\lceil n/7\rceil}\Prob(1\le|\cS_R|\le7n).
\end{equation}
\end{lemma}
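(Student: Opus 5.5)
The plan is to pick the color class that carries at least a seventh of the spectral sample, apply the sequential Bernoulli bound of Lemma~\ref{lem:bernoulli} inside that class, and choose the exterior conditioning event so that it records a size bound on $\cS_R$. The factor $7$ in \eqref{eq:fibers} will come from a union over colors.

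\emph{Reduction to one color.} On $E_R$ every spectral site lies in $V_R^\circ$, so every site of $\cS_R$ is a tested center and $X_R=|\cS_R|=n$. Color the interior centers by $i+3j\pmod 7$, as in the proof of Theorem~\ref{thm:finite}. Then some color $c$ has $X_c\ge\lceil n/7\rceil$, and a union bound over $c$ produces the factor $7$. On $F_R$ no center has an incident face, so $Y_c=0$.

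\emph{Choice of conditioning event.} Fix a color $c$ and a center-inclusion pattern $x=(x_j)$ for the wheels of color $c$ with $k=\sum_jx_j\ge\lceil n/7\rceil$. Let $U_x$ be the union of the occupied wheels $W_{v_j}$, $x_j=1$, and define
\[
\mathcal H_x=\{|\cS_R\setminus U_x|\le n-k\}.
\]
This event is determined by $\cS_R\cap U_x^c$, and so is exterior to every occupied wheel. The unoccupied wheels of color $c$ need not be unconditioned, because the proof of Lemma~\ref{lem:bernoulli} only reveals $y_j$ at occupied centers. At such a center, the conditioning information consists of the other $x_i$, the preceding $y_i$, and $\mathcal H_x$. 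All of it depends on spectral coordinates outside $W_{v_j}$, by disjointness of the wheels.

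Proposition~\ref{prop:outside} then gives conditional success probability at least $\astar$ at each of the $k$ occupied centers. Hence
\[
\Prob(Y_c=0,\ x,\ \mathcal H_x)\le q^k\Prob(x,\ \mathcal H_x).
\]

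\emph{Containments.} The event $\{F_R,\ |\cS_R|=n,\ E_R,\ X_c\ge\lceil n/7\rceil\}$ is contained in the disjoint union over patterns $x$ with $k\ge\lceil n/7\rceil$ of the events $\{Y_c=0,\ x,\ \mathcal H_x\}$. Indeed, the $k$ occupied centers are counted in $n$, so $|\cS_R\setminus U_x|\le n-k$. Conversely, on $\{x,\mathcal H_x\}$ with $k\ge1$,
\[
1\le|\cS_R|\le(n-k)+7k\le7n.
\]
The events $\{x,\mathcal H_x\}$ are disjoint across patterns. Summing over $x$ and using $q^k\le q^{\lceil n/7\rceil}$ gives $q^{\lceil n/7\rceil}\Prob(1\le|\cS_R|\le7n)$ for each color, and summing over the seven colors proves \eqref{eq:fibers}.

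\emph{Main obstacle.} The delicate point is that the conditioning event must depend on the pattern $x$. It has to be exterior only to the wheels actually revealed, while still bounding $|\cS_R|$ by $7n$. I would check carefully that this $x$-dependence is compatible with the conditional argument of Lemma~\ref{lem:bernoulli}. The pattern $x$ is itself part of the conditioning there, so $\mathcal H_x$ is a fixed exterior event on each fiber.
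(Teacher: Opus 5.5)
Your proposal is correct and is essentially the paper's proof. The paper conditions on the full fiber $(J,T)$ with $J=S\cap V_c$ and $T=S\setminus W_J$, and your event $\{x,\mathcal H_x\}$ is exactly the union of those fibers with $J$ given by $x$ and $|T|\le n-k$; so the same sequential Bernoulli bound, the sandwich $|T|+k\le|S|\le|T|+7k$, and the seven-color union bound go through unchanged. The only implicit step is that $\mathcal H_x$ is empty when $k>n$, which is what justifies $(n-k)+7k\le 7n$.
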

\begin{proof}
Fix a color class $V_c$ and partition all supports $S$ by
\[
 J=S\cap V_c,\qquad W_J=\bigcup_{v\in J}W_v,
 \qquad T=S\setminus W_J.
\]
In such a fiber, the $k=|J|$ active centers are included and the entire support outside their disjoint wheels is fixed. Inactive centers of the same color lie outside $W_J$, so their absence is already specified by $T$. The conditional argument in Lemma~\ref{lem:bernoulli} remains valid with this extra exterior condition: every preceding marker history and every other center condition is exterior to the current wheel. The probability that all $k$ markers fail is therefore at most $q^k$ in every positive-probability fiber.

Write $\ell=|T|$. Every support in the fiber has size between $\ell+k$ and $\ell+7k$. If $k\ge\lceil n/7\rceil$ and $\ell+k\le n$, the whole fiber lies in $\{1\le|S|\le7n\}$. Sum the conditional failure bound over these disjoint fibers. This gives
\[
 \Prob(F_R,\ |\cS_R|=n,\ X_c\ge\lceil n/7\rceil)
 \le q^{\lceil n/7\rceil}\Prob(1\le|\cS_R|\le7n).
\]
On $E_R\cap\{|\cS_R|=n\}$, the seven center counts sum to $n$, so at least one is at least $\lceil n/7\rceil$. Sum over colors to prove \eqref{eq:fibers}.
\end{proof}

The conditioning in this proof fixes complete fibers, not the total Fourier degree. Other members of a selected fiber may meet the boundary collar; this is why the probability on the right of \eqref{eq:fibers} is unrestricted.

\begin{corollary}\label{cor:robust}
For critical square crossings,
\[
 \Prob\left(K_R(\cS_R)\ge\frac{\astar}{6}X_R\right)\longrightarrow1.
\]
Moreover, with probability tending to one, at least $\astar X_R/36$ sites must be deleted from $\cS_R$ to obtain a set containing no full interior elementary face.
\end{corollary}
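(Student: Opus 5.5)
We combine the density bound \eqref{eq:density} with the fact that $X_R$ grows in probability. The first assertion is the event $K_R(\cS_R)\ge\frac{\theta}{3}X_R$ with $\theta=\astar/2<\astar$. We will show $X_R\to\infty$ in probability, and then split according to whether $X_R\ge m$ or $X_R<m$ for a slowly growing threshold $m=m_R$.

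\textbf{Step 1 (density bound).} Theorem~\ref{thm:finite} with $\theta=\astar/2$ gives
\[
 \Prob\Bigl(K_R<\tfrac{\astar}{6}X_R,\ X_R\ge m\Bigr)\le\exp\{-mD(\astar/2\Vert\astar)/7\}.
\]
This tends to zero whenever $m\to\infty$, since $D(\astar/2\Vert\astar)>0$.

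\textbf{Step 2 (growth of $X_R$).} This is the main obstacle: we must show $\Prob(X_R<m)\to0$ for a suitable $m\to\infty$. The natural route is the mesoscopic machinery of Theorem~\ref{thm:mesoscopic}.
\begin{itemize}
\item Since $\mu_R(\varnothing)=\epsilon_R\to0$, the spectral sample is nonempty with high probability.
\item We need more: the sample should hit many interior sites. A second-moment argument on the count of central sites of $\cS_R\cap V_R^\circ$, using \eqref{eq:bulkone}--\eqref{eq:bulktwo} with the pair identity, only gives $\Prob(X_R>0)\ge c$. That is not enough.
\item Instead, use the known lower tail of the spectral sample (\cite{GPS}, Theorem 1.1): $\Prob(0<|\cS_R|<m(r))\asymp (R/r)^2\alpha_4(r,R)^2$, which tends to zero for fixed $r$. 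This gives $|\cS_R|\to\infty$ in probability once $\epsilon_R\to0$.
\item The boundary collar must also be handled, that is, we need $|\cS_R\setminus V_R^\circ|$ small relative to $|\cS_R|$. Lemma~\ref{lem:fibers} is the tool prepared for this. Summing \eqref{eq:fibers} over $n\ge n_0$ bounds triangle-free supports inside $E_R$.
\item The more direct route, however, is to bound $\E|\cS_R\cap(V_R\setminus V_R^\circ)|$ by $O(R\alpha_3^+)$-type half-plane arm estimates (Lemma~\ref{lem:reflection}). This is $o(m(R))$. Since $|\cS_R|$ is typically of order $m(R)$, a Markov bound on the collar count gives $X_R\ge|\cS_R|/2$ with high probability.
\end{itemize}
If only the weaker statement $|\cS_R|\ge m$ is available, argue by contradiction on the event $\{X_R<m\}$. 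Then $|\cS_R|$ is concentrated in the collar, which has probability $o(1)$ by the expectation bound together with a lower bound $\Prob(|\cS_R|\ge\delta m(R))\ge 1-o(1)$ from \cite{GPS}. Choosing $m=m_R\to\infty$ slowly then combines Steps 1 and 2.

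\textbf{Step 3 (deletion statement).} On the event from the first part, $K_R\ge\astar X_R/6$. Each deleted site destroys at most six full interior elementary faces, namely those containing it. So removing all faces requires at least $K_R/6\ge\astar X_R/36$ deletions, which holds with probability tending to one.
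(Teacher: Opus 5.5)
Your argument is correct and follows the paper's proof: apply \eqref{eq:density} with $\theta=\astar/2$, use $X_R\to\infty$ in probability, and convert faces into deletions using the fact that each site lies in at most six faces. The only difference is in Step~2. The paper gets $X_R\to\infty$ directly from \eqref{eq:small}, $\epsilon_R\to0$, and Lemma~\ref{lem:boundary}: the collar is hit with probability $O(R^{-1})$, so $X_R=|\cS_R|$ with high probability. Your Markov route also works, but the lower-tail input it needs is the uniform statement $\lim_{\delta\to0}\limsup_R\Prob(|\cS_R|<\delta m(R))=0$, not $1-o(1)$ for fixed $\delta$. The detour through Lemma~\ref{lem:fibers} plays no role.
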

\begin{proof}
Section~\ref{sec:critical} proves that $X_R\to\infty$ in probability. Apply \eqref{eq:density} with $\theta=\astar/2$. Each site belongs to at most six elementary faces, so deleting $k$ sites can destroy at most $6k$ of the original faces. The second assertion follows.
\end{proof}

\section{Critical singularity and quantitative bounds}\label{sec:criticalrates}
\subsection{Critical square estimates}\label{sec:critical}

Fix an admissible microscopic scale $r_0$ large enough for all four- and six-arm events, and write $\alpha_j(R)=\alpha_j(r_0,R)$. Fixed changes of $r_0$ change these probabilities only by constant factors. The triangular-lattice four-arm exponent is
\begin{equation}\label{eq:a4}
 \alpha_4(R)=R^{-5/4+o(1)}.
\end{equation}
We use the square spectral lower-tail theorem of \cite[Theorem~1.1 and equation~(1.7)]{GPS}, together with \eqref{eq:a4}; the arm exponent is due to \cite{SW}. These imply, for every fixed $A>0$,
\begin{equation}\label{eq:small}
 \Prob(0<|\cS_R|\le A\ln R)\le R^{-1/2+o(1)}.
\end{equation}
Indeed, $\E|\cS_R|\asymp R^2\alpha_4(R)=R^{3/4+o(1)}$, and the lower-tail exponent in the normalized spectral size is $2/3+o(1)$. Substituting $A\ln R$ gives \eqref{eq:small}.

We spell out the boundary estimate, since the local algebra only applies to complete interior wheels.

\begin{lemma}\label{lem:boundary}
Let $B_R=V_R\setminus V_R^\circ$ be a collar of fixed lattice width around the square boundary. Then
\[
 \Prob(\cS_R\cap B_R\ne\varnothing)
 \le\E|\cS_R\cap B_R|\le C R^{-1}.
\]
\end{lemma}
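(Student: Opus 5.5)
The first inequality is Markov's inequality, so the work is the expectation bound. I would use the one-coordinate identity \eqref{eq:biasedinfluence} at $p=1/2$, which gives $\Prob(x\in\cS_R)=\Prob(x\in\cP_R)$. Then
\[
 \E|\cS_R\cap B_R|=\sum_{x\in B_R}\Prob(x\in\cP_R).
\]
The collar $B_R$ has $O(R)$ sites. It therefore suffices to show that the pivotal probabilities of collar sites sum to $O(R^{-1})$. Equivalently, the average collar pivotal probability should be $O(R^{-2})$, apart from the near-corner sites.

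\emph{Side sites.} Let $x$ lie within fixed distance of one side $e$, and let $t$ be its distance to the nearest corner of the square. If $x$ is pivotal, then, as in the proof of Lemma~\ref{lem:sidebound}, the crossings from the other three sides reach a boundary notch around $x$. Loop erasure and the planar-order argument give three disjoint alternating arms in the half-plane bounded by the line of $e$. These arms run from a microscopic radius to radius $\asymp\min(t,R)$. I then apply the half-plane bound. For the sharp $R^{-1}$ rate, the general gap of Lemma~\ref{lem:reflection} is not enough. I would instead use the exact critical triangular half-plane three-arm exponent, which is $2$: $\Prob(A_3^H(r_0,t))\asymp t^{-2}$, as in \cite[equation~(4.12)]{GPS} and its triangular analogue. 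Summing $t^{-2}$ over the $O(1)$ sites at each distance $t\le R$ from a corner gives a convergent series, not $O(R^{-1})$. I therefore have to split the collar sites more carefully.

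\emph{Middle of the sides.} For sites on the middle portion of a side, with $t\asymp R$, the bound is $CR^{-2}$ per site. Summed over the $O(R)$ such sites, this gives $CR^{-1}$, as required.

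\emph{Near-corner sites.} For sites within distance $t\ll R$ of a corner, I would use corner arm events. At a $90^\circ$ corner, a pivotal site near a corner of the square needs two arms inside the quarter-plane, of opposite colors, running from scale $t$ out to scale $R$. It also needs three half-plane arms from scale $1$ to scale $t$. This gives the bound
\[
 \Prob(x\in\cP_R)\le Ct^{-2}(t/R)^{\gamma},
\]
where $\gamma$ is the two-arm corner exponent for angle $\pi/2$. By the conformal map $z\mapsto z^2$, this exponent is twice the half-plane two-arm exponent $1$, so $\gamma=2$. A cruder route also works: any quarter-plane polychromatic two-arm exponent strictly greater than $1$ suffices, and this can be obtained from the inversion trick of Lemma~\ref{lem:reflection} applied within the corner. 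With $\gamma>1$, summing over $t\le R$ gives
\[
 \sum_{t\le R}t^{-2}(t/R)^{\gamma}\le CR^{-\min(\gamma,2)}+CR^{-2}\sum_{t\le R}t^{\gamma-2}\le CR^{-1}.
\]

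\emph{Main obstacle.} The delicate step is the near-corner regime, and in particular extracting the nested arm events there. One has to separate the microscopic three-arm annulus from the mesoscopic corner annulus using disjoint supports and independence, as in Lemma~\ref{lem:sidebound}. One also has to quote a corner exponent strictly larger than one. The middle-of-side sites only need the classical half-plane three-arm exponent $2$ on the triangular lattice.
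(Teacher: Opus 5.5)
Your argument is correct and is essentially the paper's proof. After the fair one-point identity, each collar site at distance $d$ from the nearest corner is bounded by three half-plane arms from microscopic scale to $d$ times two opposite-coloured quarter-plane arms from $d$ to $R$, giving $Cd^{-2}(d/R)^{1+\zeta}$; the paper then sums over dyadic $d$ with $O(d)$ sites per shell instead of treating the middle of the sides separately.

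Two minor slips do not affect the main line with $\gamma=2$. First, for $1<\gamma<2$ the middle expression in your final display is not an upper bound for the sum; the sum equals $R^{-\gamma}\sum_{t\le R}t^{\gamma-2}\asymp R^{-1}$, which is still what you need. Second, a single central inversion at the corner produces only four full-plane arms, with exponent $5/4$ and hence only $5/8$ for the corner event, so your ``cruder route'' would need more than the trick of Lemma~\ref{lem:reflection}.
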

\begin{proof}
By \eqref{eq:biasedinfluence}, at the fair product measure the one-point probabilities agree:
\[
 \Prob(v\in\cS_R)=\Prob(v\in\cP_R).
\]
Group collar sites by their dyadic distance $d\ge1$ to the nearest corner. Each group has $O(d)$ sites. Pivotality requires three half-plane arms from fixed scale to $d$, and two quarter-plane arms from scale $d$ to $R$. Use constant buffers to make the annuli disjoint. The side and corner estimates in \cite[equations~(4.12)--(4.13) and Section~7.2]{GPS} bound this probability by
\[
 C d^{-2}(d/R)^{1+\zeta}
\]
for some $\zeta>0$. At $d\asymp1$ or $d\asymp R$, omit the corresponding bounded-ratio annulus. Summation gives
\[
 \E|\cS_R\cap B_R|
 \le C\sum_{d\le R\text{ dyadic}}d\,d^{-2}(d/R)^{1+\zeta}
 \le C R^{-1}.
\]
Fixed changes in collar width only change the constant.
\end{proof}

The conformal crossing limit for a square gives $\E f_R\to0$, so $\epsilon_R\to0$; see \cite{Smirnov}. This limit, rather than an assumed finite-lattice symmetry, is enough here. Equations \eqref{eq:small} and Lemma~\ref{lem:boundary} imply $X_R\to\infty$ in probability.

For the constant-factor estimate, we use the more detailed form of \cite[Proposition~4.1]{GPS}. With $\gamma_4(R)=R^2\alpha_4(R)^2$, that proposition bounds the probability of exactly $k$ occupied boxes of the fixed scale $r_0$ by $C g(k)\gamma_4(R)$, where $g$ can be chosen increasing and
\[
 g(k)\le\exp\{C\ln^2(k+2)\}.
\]
Each site or tile meets at most a fixed number $C_0$ of these boxes. Absorbing this bounded overlap into the constants and into the increasing subexponential function $g$, we obtain
\begin{equation}\label{eq:gpssize}
 \Prob(1\le|\cS_R|\le7n)\le Cn g(7n)\gamma_4(R).
\end{equation}
Combine this with Lemma~\ref{lem:fibers}. Since $\ln(ng(7n))=o(n)$, there are constants $C,\lambda>0$ such that
\begin{equation}\label{eq:sizeresolved}
 \Prob(F_R,\ |\cS_R|=n,\ E_R)
 \le C\gamma_4(R)e^{-\lambda n},\qquad n\ge1.
\end{equation}

\begin{proof}[Proof of Theorems~\ref{thm:discrete} and \ref{thm:rate}]
Sum \eqref{eq:sizeresolved} over $n\ge1$ and add the collar probability from Lemma~\ref{lem:boundary}. Since $R^{-1}=o(\gamma_4(R))$,
\[
 \Prob(K_R(\cS_R)=0,\ \cS_R\ne\varnothing)\le C\gamma_4(R).
\]
For the reverse inequality, retain the singleton supports in a fixed bulk subsquare. Monotonicity gives
\[
 \widehat f_R(\{v\})=\E\partial_vf_R
 =\Prob(v\in\cP_R)\asymp\alpha_4(R)
\]
uniformly for these sites, by the bulk four-arm comparison. There are order $R^2$ such sites. A singleton contains no triangular face, so
\[
 \Prob(K_R(\cS_R)=0,\ \cS_R\ne\varnothing)
 \ge c R^2\alpha_4(R)^2.
\]
This proves \eqref{eq:rate}. Adding the empty mass and using $\epsilon_R\to0$ proves the spectral assertion in Theorem~\ref{thm:discrete}. Lemma~\ref{lem:forest} gives the pivotal assertion. The event $\{K_R>0\}$ then proves convergence of total variation to one.
\end{proof}

\begin{corollary}\label{cor:conditionalsize}
There are constants $C,\lambda>0$ such that
\[
 \Prob(|\cS_R|\ge m\mid F_R,\ \cS_R\ne\varnothing)
 \le C e^{-\lambda m}+O\bigl(R^{-1}/\gamma_4(R)\bigr).
\]
In particular, these conditional size laws are asymptotically tight.
\end{corollary}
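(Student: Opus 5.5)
The plan is to compute the conditional probability as a ratio. We bound the numerator using the size-resolved estimate \eqref{eq:sizeresolved} and the collar bound of Lemma~\ref{lem:boundary}, and we bound the denominator from below using the lower half of Theorem~\ref{thm:rate}. Write $\gamma_4(R)=R^2\alpha_4(R)^2$.

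For the numerator, split according to the event $E_R=\{\cS_R\subseteq V_R^\circ\}$:
\[
 \Prob(|\cS_R|\ge m,\ F_R,\ \cS_R\ne\varnothing)
 \le\sum_{n\ge\max(m,1)}\Prob(F_R,\ |\cS_R|=n,\ E_R)+\Prob(\cS_R\cap B_R\ne\varnothing).
\]
Summing \eqref{eq:sizeresolved} over $n\ge m$ gives the geometric bound $C\gamma_4(R)e^{-\lambda m}/(1-e^{-\lambda})$. Lemma~\ref{lem:boundary} bounds the collar term by $CR^{-1}$. Neither bound uses any conditioning on the size.

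For the denominator, Theorem~\ref{thm:rate} (its singleton lower bound) gives $\Prob(F_R,\ \cS_R\ne\varnothing)\ge c\gamma_4(R)$. Dividing the two estimates yields
\[
 \Prob(|\cS_R|\ge m\mid F_R,\ \cS_R\ne\varnothing)\le C'e^{-\lambda m}+C'R^{-1}/\gamma_4(R),
\]
with constants independent of $R$ and $m$.

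Tightness then follows because $R^{-1}/\gamma_4(R)=R^{-1/2+o(1)}\to0$, using $\gamma_4(R)=R^{-1/2+o(1)}$ from \eqref{eq:rate}. Given $\varepsilon>0$, we first choose $m$ with $C'e^{-\lambda m}<\varepsilon/2$, and then take $R$ large enough that the error term is below $\varepsilon/2$. The finitely many small values of $R$ are handled trivially, because each of those size laws has finite support.

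There is no real obstacle, since the work was already done in \eqref{eq:sizeresolved}. The only point needing care is that the collar error is not resolved by size. It must therefore be carried as an additive term and compared with the denominator scale $\gamma_4(R)$, which is exactly the form of the stated error.
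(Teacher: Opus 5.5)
Your proposal is correct and follows the paper's argument: sum \eqref{eq:sizeresolved} over $n\ge m$, add the $O(R^{-1})$ collar term from Lemma~\ref{lem:boundary}, and divide by the singleton lower bound $c\gamma_4(R)$ from Theorem~\ref{thm:rate}. Your tightness step, using $R^{-1}/\gamma_4(R)=R^{-1/2+o(1)}\to0$, is the intended conclusion.
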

\begin{proof}
Sum \eqref{eq:sizeresolved} over $n\ge m$, add the collar bound, and divide by the lower bound in Theorem~\ref{thm:rate}.
\end{proof}

\subsection{The sharp cost of low face density}\label{sec:lowdensity}

The same rare-event scale governs every fixed face density below the local threshold. This strengthens both the triangle-free estimate and the qualitative density conclusion.

\begin{theorem}\label{thm:lowdensity}
For critical triangular-site square crossings and each fixed $0\le\delta<\astar/3$, put
\[
 F_R^\delta=\{K_R(\cS_R)\le\delta|\cS_R|\},\qquad
 \gamma_4(R)=R^2\alpha_4(R)^2.
\]
There are $C_\delta,\lambda_\delta>0$ such that, for every integer $n\ge1$,
\begin{equation}\label{eq:lowdensity_size}
 \Prob(F_R^\delta,\ |\cS_R|=n,\ E_R)
 \le C_\delta\gamma_4(R)e^{-\lambda_\delta n},
 \qquad E_R=\{\cS_R\subseteq V_R^\circ\}.
\end{equation}
Consequently,
\begin{equation}\label{eq:lowdensity_mass}
 \Prob(F_R^\delta,\ \cS_R\ne\varnothing)
 \asymp_\delta\gamma_4(R)=R^{-1/2+o(1)}.
\end{equation}
Moreover, for every integer $m\ge1$,
\[
 \Prob\bigl(|\cS_R|\ge m\mid F_R^\delta,\ \cS_R\ne\varnothing\bigr)
 \le C_\delta e^{-\lambda_\delta m}+C_\delta R^{-1}/\gamma_4(R).
\]
\end{theorem}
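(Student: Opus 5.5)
The plan is to repeat the proof of Lemma~\ref{lem:fibers} and \eqref{eq:sizeresolved}, with the all-fail bound replaced by the binomial lower-tail bound of Lemma~\ref{lem:bernoulli}. We work on $E_R\cap\{|\cS_R|=n\}$ and fix the color $c$ with $X_c\ge\lceil n/7\rceil$, where $X_c$ counts the centers of color $c$ in the support. We then partition supports into fibers $(J,T)$ exactly as in Lemma~\ref{lem:fibers}, with $J=S\cap V_c$ and $k=|J|\ge\lceil n/7\rceil$.

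\textbf{Step 1: the fiberwise bound.} Inside a fiber, the sequential conditional argument gives that the number $Y_c$ of successful markers stochastically dominates $\mathrm{Bin}(k,\astar)$. This uses only that all conditioning is exterior to the current wheel. On $F_R^\delta$ we have, by \eqref{eq:Yfaces},
\[
 Y_c\le Y\le 3K_R\le3\delta n\le 21\delta k .
\]
This bound is not good enough on its own, because $21\delta$ need not be below $\astar$. The fix is to choose the color more cleverly: the aim is a color with $Y_c\le\theta X_c$ for some $\theta<\astar$.

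\textbf{Step 2: choosing the color.} Since $\sum_cY_c=Y\le 3\delta n$ and $\sum_cX_c=n$, an averaging argument gives a color with
\[
 X_c\ge\eta n,\qquad Y_c\le\theta X_c,
\]
for a suitable $\eta>0$ and some $\theta$ with $3\delta<\theta<\astar$. To see this, fix $\theta\in(3\delta,\astar)$ and call a color bad if $Y_c>\theta X_c$. The bad colors carry total count
\[
 \sum_{\text{bad}}X_c<\theta^{-1}\sum_c Y_c\le (3\delta/\theta)\,n .
\]
So the good colors carry at least $(1-3\delta/\theta)n$, and some good color has $X_c\ge(1-3\delta/\theta)n/7=:\eta n$. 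The color condition $Y_c\le\theta X_c$ is not fiber-measurable in advance, but that is harmless: we union over $c$ and over fibers with $k\ge\eta n$. In each such fiber the probability that the Bernoulli markers satisfy $Y_c\le\theta k$ is at most $e^{-kD(\theta\Vert\astar)}$, by the Chernoff/mgf form \eqref{eq:classmgf} applied to a single color. No H\"older step is needed.

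\textbf{Step 3: summing and the conclusions.} As before, each fiber with $k\ge\eta n$ and $\ell+k\le n$ lies in $\{1\le|S|\le7n\}$, so
\[
 \Prob(F_R^\delta,|\cS_R|=n,E_R)\le 7e^{-\eta nD(\theta\Vert\astar)}\,\Prob(1\le|\cS_R|\le7n).
\]
Bounding the last probability by \eqref{eq:gpssize}, with $\ln(ng(7n))=o(n)$, gives \eqref{eq:lowdensity_size}. For \eqref{eq:lowdensity_mass}, the upper bound follows by summing over $n$ and adding the collar term $CR^{-1}=o(\gamma_4(R))$ from Lemma~\ref{lem:boundary}. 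The lower bound comes from singletons, exactly as in Theorem~\ref{thm:rate}, since singletons have $K_R=0\le\delta$. The conditional tail follows by summing over $n\ge m$, adding the collar bound, and dividing by the lower bound.

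\textbf{Main obstacle.} The delicate point is Step~2's interaction with fiber conditioning. The binomial domination is only available conditionally on $J$, while the good-color selection depends on the $Y_c$. This is resolved by the union over colors: the event $\{Y_c\le\theta X_c,\ X_c\ge\eta n\}$ is bounded fiberwise for each fixed $c$, and the averaging only guarantees that one of the seven events occurs.
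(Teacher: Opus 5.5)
Your proposal is correct and follows essentially the same route as the paper. You choose $\theta\in(3\delta,\astar)$, use pigeonhole to find a color with $X_c\ge\eta n$ and $Y_c\le\theta X_c$, and apply the fiberwise Chernoff bound $e^{-kD(\theta\Vert\astar)}$ on the $(J,T)$ fibers of Lemma~\ref{lem:fibers}, taking a union over the seven colors. The rest then follows from \eqref{eq:gpssize}, the collar bound, and singleton supports. Your bad-color averaging gives $\eta=(1-3\delta/\theta)/7$ rather than the paper's $(1-3\delta/\theta)/14$; the difference is immaterial.
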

\begin{proof}
Choose $\theta$ with $3\delta<\theta<\astar$, and put
\[
 \eta=\frac{1-3\delta/\theta}{14}>0,\qquad
 d_\theta=D(\theta\Vert\astar)>0.
\]
On $E_R\cap\{|\cS_R|=n\}$ the seven center counts satisfy $\sum_cX_c=n$. On $F_R^\delta$, equation~\eqref{eq:Yfaces} gives $\sum_cY_c\le3\delta n$. There must therefore be a color $c$ such that
\begin{equation}\label{eq:lowdensity_color}
 X_c\ge\eta n,\qquad Y_c\le\theta X_c.
\end{equation}
Indeed, the classes with $X_c<\eta n$ contain fewer than $7\eta n$ centers in total. If every other class had $Y_c>\theta X_c$, then
\[
 \sum_cY_c>\theta(1-7\eta)n=\frac12(\theta+3\delta)n>3\delta n,
\]
a contradiction.

For each color, use the disjoint fibers $(J,T)$ from the proof of Lemma~\ref{lem:fibers}. Write $k=|J|$ and $\ell=|T|$. The fiber fixes the included centers and the entire support outside their disjoint wheels. Thus the sequential conditional argument of Lemma~\ref{lem:bernoulli} applies within the fiber, and its Chernoff bound is
\[
 \Prob(Y_c\le\theta k\mid J,T)\le e^{-d_\theta k}
\]
for every positive-probability fiber. No conditioning on total support size is used in this inequality. Only fibers with $k\ge\eta n$ and $\ell+k\le n$ can contribute to \eqref{eq:lowdensity_color} at size $n$. Such a fiber has $k\ge1$, and every one of its supports has size between $\ell+k$ and $\ell+7k\le7n$. Summing over these fibers and over the seven colors gives
\[
 \Prob(F_R^\delta,\ |\cS_R|=n,\ E_R)
 \le7e^{-\eta d_\theta n}\Prob(1\le|\cS_R|\le7n).
\]
By \eqref{eq:gpssize}, the right-hand side is at most $C\gamma_4(R)ng(7n)e^{-\eta d_\theta n}$. Since $\ln(ng(7n))=o(n)$, this proves \eqref{eq:lowdensity_size}, for instance with $\lambda_\delta=\eta d_\theta/2$ after increasing $C_\delta$ to cover the finitely many smaller $n$.

Sum over $n\ge1$ and use Lemma~\ref{lem:boundary}; its error $O(R^{-1})$ is $o(\gamma_4(R))$. This proves the upper bound in \eqref{eq:lowdensity_mass}. Every singleton belongs to $F_R^\delta$, including when $\delta=0$, so the bulk singleton argument in the proof of Theorem~\ref{thm:rate} gives its lower bound. Summing \eqref{eq:lowdensity_size} over $n\ge m$, adding the collar error, and dividing by that lower bound proves the conditional estimate.
\end{proof}

\subsection{The total variation deficit}\label{sec:TV}

Write $\mu_R$ and $\nu_R$ for the discrete spectral and pivotal laws. Their overlap is
\begin{equation}\label{eq:overlap}
 1-\TV(\mu_R,\nu_R)=\sum_A\min\{\mu_R(A),\nu_R(A)\}.
\end{equation}
We first construct shared mass on identical singleton supports.

\begin{lemma}\label{lem:singlepiv}
Uniformly for sites $v$ in a fixed bulk subsquare,
\[
 \Prob(\cP_R=\{v\})\ge c\alpha_6(R),
\]
where $\alpha_6(R)$ denotes the six-arm probability from fixed microscopic scale to $R$.
\end{lemma}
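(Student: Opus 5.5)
We construct, with probability of order $\alpha_6(R)$, a configuration in which $v$ is the unique pivotal site. The target event is the following. The site $v$ is open. Two open arms leave $v$ and reach the left and right sides of the square. Four further arms also leave $v$: two closed arms reaching the top and bottom sides, and two additional closed arms, one on each side of the open crossing, which block every alternative open route. In triangular-site language, $\cP_R=\{v\}$ on the crossing event means exactly two things. Every open left--right crossing passes through $v$. No other site lies on every open crossing, and no other closed site is the unique link of a would-be crossing. Equivalently, removing $v$ separates the open cluster into a left part and a right part. Each of these must be joined to its side by at least two disjoint open paths, or more precisely by a doubly connected route, so that no single site is a bottleneck. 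On the closed side, the two closed top--bottom paths through the neighbourhood of $v$ force the open crossing to use $v$. Having a closed top-to-bottom "almost crossing" interrupted only at $v$ from both orientations prevents any other site from being pivotal. The corresponding arm picture at $v$ is an alternating six-arm event: open, closed, open, closed, open, closed, with the open arms doubled on the left and right, and the closed arms doubled top and bottom.

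Concretely, the six arms at $v$ are open to the left, closed upward twice, open to the right... I would organise them as follows. Two open arms go to the left side and are separated by a closed arm. Two open arms go to the right side and are separated by a closed arm. One closed arm goes to the top and one to the bottom. The cyclic word is then O C O C O C. For the open arms, the left pair joins $v$ to the left side via two disjoint paths, so the left connection has no cut site. The same holds on the right. For the closed arms, the top and bottom closed arms, together with $v$ being open, mean that closing $v$ yields a closed top--bottom crossing, so $v$ is pivotal. Any other site $w$ is not pivotal. If $w$ is open, it lies on at most one of the two disjoint open routes on its side, so closing it leaves a crossing. If $w$ is closed, opening it cannot destroy the crossing, since the function is increasing and $w$ is not pivotal. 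This requires care with closed $w$: pivotal closed sites do not exist on the event $f=1$, because flipping them only adds open sites. Hence on this event $\cP_R=\{v\}$ exactly.

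The probability lower bound follows the bulk four-arm construction in Lemma~\ref{lem:bulkmoments}. First require six alternating arms from a microscopic neighbourhood of $v$ to a square of radius $c_\gamma R$, with well-separated landing arcs directed toward the appropriate sides. Arm separation for the alternating six-arm event gives probability at least $c\alpha_6(R)$. Next, extend the six arms through separated corridors to the corresponding boundary arcs of $[0,R]^2$. The two left open arms go to two disjoint points of the left side. The closed arm between them is extended only a bounded macroscopic distance. The top and bottom closed arms go to the top and bottom sides. Uniform RSW and separated-domain positive association, as in \cite[Lemma~13 and Theorem~11]{Nolin}, give a positive conditional probability for these extensions. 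A bounded number of finite-energy modifications near $v$ fix the microscopic connections: $v$ open, each arm attached to the correct neighbour of $v$, and no extra short open shortcut around $v$. Together these give $\Prob(\cP_R=\{v\})\ge c\alpha_6(R)$ uniformly over the bulk subsquare.

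I expect the main obstacle to be the combinatorial verification that the constructed event forces $\cP_R=\{v\}$ exactly and not merely $v\in\cP_R$. This includes ruling out pivotal sites on the extension corridors and near the boundary landing points. I would handle it by requiring in the extensions that each open route is two-connected, meaning two disjoint open paths all the way to the side, each landing on a side segment of macroscopic length. Then every single open site other than $v$ can be bypassed.
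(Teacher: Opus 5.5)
There is a genuine gap in the arm event, and your write-up is inconsistent about which event you use. Your pivotality check in the second paragraph uses only the following: $v$ is open, there are two disjoint open arms from neighbours of $v$ to the left side, two disjoint open arms to the right side, and one closed arm each to the top and bottom sides. That is a six-arm event with cyclic word $OOCOOC$. For that event your verification is correct: every open $w\neq v$ misses one of the two left--right paths through $v$, closed sites cannot be pivotal when $f=1$, and closing $v$ joins the two closed arms.

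The event you actually describe is different. Each pair of open arms is separated by a closed arm, and there are also top and bottom closed arms. That is the alternating word $OCOCOCOC$, an eight-arm event, and it costs $\alpha_8(R)=R^{-21/4+o(1)}$, far below $\alpha_6(R)=R^{-35/12+o(1)}$. Extending the separating closed arms ``only a bounded macroscopic distance'' does not help, since they still must cross the annulus from microscopic scale to $c_\gamma R$.

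Conversely, the alternating six-arm event $OCOCOC$ to which you apply arm separation has only three open arms. One side is then reached by a single prescribed open arm, so the event no longer forces $\cP_R=\{v\}$. Your first paragraph's version, with two open and four closed arms, has the same defect on both sides.

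The missing idea is that the separating closed arms are unnecessary: consecutive open arms only need to be disjoint. The correct word $OOCOOC$ has six arms but is not alternating, so arm separation for the alternating event does not by itself give the lower bound. You need the polychromatic colour-switching comparison, which on the triangular lattice makes any six-arm word containing both colours comparable to $\alpha_6(R)$ \cite[Proposition~20]{Nolin}. Combine it with the separated, fenced version with prescribed landing intervals \cite[Theorem~11, Proposition~12]{Nolin} and with corridor extensions to the six boundary intervals. Finally, add a fixed microscopic configuration attaching the arms to the six neighbours of $v$ in the order $OOCOOC$, with $v$ open. With this correction your argument becomes the paper's proof.
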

\begin{proof}
Prescribe six vertex-disjoint arms from a fixed microscopic neighborhood of $v$ to separated boundary intervals, in cyclic order: two open arms to the left, one closed arm to the top, two open arms to the right, and one closed arm to the bottom. The color word is $OOCOOC$. Polychromatic color-order comparison and arm separation give probability at least $c\alpha_6(R)$ for this event with prescribed inner and outer landing intervals; see \cite[Theorem~11, equation~(4.8), Proposition~12, and Proposition~20, equation~(5.2)]{Nolin}. Use the fenced, well-separated version of the arm event. Extensions to the six boundary intervals take place in disjoint corridors of fixed aspect ratio, each with one prescribed color; the locally monotone gluing estimate in \cite[Proposition~12]{Nolin} changes the probability by a constant factor.

Inside the fixed microscopic neighborhood, prescribe six disjoint corridors joining the arms in cyclic order to the six neighbors of $v$, colored $OOCOOC$, and set $v$ open. Choose the neighborhood large enough for these finitely many corridors. This fixed assignment costs a positive constant, independent of $R$ and the bulk site. The prescribed inner landing intervals allow it to be joined to the six exterior arms.

The four open arms form two left--right paths whose only common random site is $v$. Closing $v$ joins the two closed arms to a top--bottom closed path, so $v$ is pivotal. Every other site is avoided by at least one of the two original open crossings and cannot be pivotal. Thus the constructed event implies $\cP_R=\{v\}$.
\end{proof}

\begin{proof}[Proof of Theorem~\ref{thm:TVrate}]
Let $O_R$ be the nonempty part of the sum in \eqref{eq:overlap}. By the pivotal triangle prohibition, for any integer $m\ge1$,
\[
 O_R\le\nu_R(1\le|\cP_R|\le m)
       +\mu_R(F_R,\ |\cS_R|>m).
\]
The pivotal analogue of the fixed-scale box bound, given in \cite[Remark~4.6]{GPS}, yields
\[
 \nu_R(1\le|\cP_R|\le m)\le C m g(m)R^2\alpha_6(R),
\]
where $g(m)\le\exp\{C\ln^2(m+2)\}$. The second term is at most $CR^{-1}+q^{m/7}$ by the collar estimate and \eqref{eq:noface}. Take $m=\lceil A\ln R\rceil$ with $A$ sufficiently large. Since
\[
 R^2\alpha_6(R)=R^{-11/12+o(1)},
\]
we obtain $O_R\le R^{-11/12+o(1)}$.

For the lower bound, at every site in a fixed bulk subsquare,
\[
 \mu_R(\{v\})\ge c\alpha_4(R)^2,\qquad
 \nu_R(\{v\})\ge c\alpha_6(R)
\]
by the singleton spectral identity and Lemma~\ref{lem:singlepiv}. The ratio $\alpha_4(R)^2/\alpha_6(R)=R^{5/12+o(1)}$ tends to infinity. Summing the minima over order $R^2$ bulk sites gives
\[
 O_R\ge cR^2\alpha_6(R)=R^{-11/12+o(1)}.
\]

Finally, $\nu_R(\varnothing)$ is bounded below uniformly: require open crossings in two disjoint horizontal strips of fixed aspect ratio. RSW and independence give positive probability, and two disjoint crossings preclude a pivotal. Since $\mu_R(\varnothing)=\epsilon_R\to0$, the empty contribution to \eqref{eq:overlap} is exactly $\epsilon_R$ for all sufficiently large $R$. This proves the theorem.
\end{proof}

\section{Local criteria and scope extensions}\label{sec:localscope}
\subsection{An abstract local criterion}\label{sec:criterion}

The preceding proof separates a finite-dimensional condition from spectral-size estimates. The same argument gives a useful criterion for other connection functions.

\begin{proposition}\label{prop:criterion}
Suppose a sequence of normalized real functions $f_n$ on finite product cubes has tested centers $V_n^\circ$. For each center $v$, suppose there are a finite block $W_v\ni v$ and a local diagonal Fourier projection $T_v\le P_v$ onto an event of the support in $W_v$, with a uniform constant $0<a\le1$, such that
\[
 \|T_vAf_n\|_2^2\ge a\|P_vAf_n\|_2^2
\]
for every exterior linear operator $A$. Assume the blocks can be partitioned into $C$ disjoint families, where $C$ is independent of $n$. Let $Y_n$ count successful local tests and let $X_n=|\cS_{f_n}\cap V_n^\circ|$. Then
\[
 \Prob(Y_n=0,\ X_n\ge m)\le(1-a)^{m/C}.
\]
If $X_n\to\infty$ in probability and comparison laws assign probability zero to every successful test, the two sequences of laws are asymptotically mutually singular.
\end{proposition}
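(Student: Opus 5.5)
The plan is to repeat the argument of Lemma~\ref{lem:bernoulli} and the proof of Theorem~\ref{thm:finite}, with the six-cycle coercivity replaced by the assumed inequality with constant $a$. Fix $n$ and one of the $C$ families; its blocks $W_{v_1},\dots,W_{v_k}$ are pairwise disjoint. For this family put $x_j=\one_{\{v_j\in\cS_{f_n}\}}$, and let $y_j$ indicate that the spectral sample lies in the local event underlying $T_{v_j}$. Since $T_v\le P_v$ is diagonal, $y_j\le x_j$.

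\textbf{Step 1: conditional success probability.} Take any spectral event $\mathcal H$ determined by $\cS_{f_n}\cap W_{v_j}^c$. Let $A$ be the diagonal Walsh projector that keeps exactly the supports satisfying $\mathcal H$; it acts only on coordinates outside $W_{v_j}$. Parseval converts the hypothesis into
\[
 \Prob(y_j=1,\mathcal H)\ge a\,\Prob(x_j=1,\mathcal H).
\]
This uses only that $T_{v_j}$ and $P_{v_j}$ are diagonal and commute with $A$. The identification holds for normalized $f_n$, that is $\|f_n\|_2=1$, so that the squared coefficients form a probability law.

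\textbf{Step 2: sequential domination.} Condition on the full center pattern $(x_1,\dots,x_k)$ and reveal the $y_j$ at the occupied centers in a fixed order. The conditioning information for the current center consists of the other $x_i$ and the earlier $y_i$. Because the blocks are disjoint, all of it is measurable with respect to the spectrum outside $W_{v_j}$. Step~1 then gives conditional success probability at least $a$. Hence the number of successes $Y^{(c)}$ in family $c$ stochastically dominates $\mathrm{Bin}(X^{(c)},a)$, and in particular
\[
 \Prob(Y^{(c)}=0\mid x)\le(1-a)^{X^{(c)}}.
\]

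\textbf{Step 3: combining the families.} Write $X_n=\sum_cX^{(c)}$. On the event $\{X_n\ge m\}$ some family has $X^{(c)}\ge m/C$. One option is a union bound, which gives $C(1-a)^{m/C}$. To get the stated bound with no prefactor, I will instead use the H\"older device of Theorem~\ref{thm:finite}. Letting $t\to\infty$ in \eqref{eq:classmgf}, the bound becomes
\[
 \E\bigl[\one_{\{Y^{(c)}=0\}}(1-a)^{-X^{(c)}}\bigr]\le1.
\]
H\"older's inequality applied to the $C$-th roots of these variables gives
\[
 \E\bigl[\one_{\{Y_n=0\}}(1-a)^{-X_n/C}\bigr]\le1,
\]
and Markov's inequality on $\{X_n\ge m\}$ yields $(1-a)^{m/C}$. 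The H\"older step needs no independence between the families.

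\textbf{Step 4: singularity.} Let $G_n=\{Y_n>0\}$. The comparison laws give $G_n$ probability zero by assumption. For the spectral laws,
\[
 \Prob(G_n^c)\le(1-a)^{m/C}+\Prob(X_n<m).
\]
Let $n\to\infty$ and then $m\to\infty$; since $X_n\to\infty$ in probability, the spectral mass of $G_n$ tends to one. Total variation is therefore at least the spectral mass of $G_n$ minus its comparison mass, which tends to one, so the two sequences are asymptotically singular.

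The main obstacle is Step 2. It requires every conditioning event, including the earlier test outcomes, to be exterior to the current block, so that Step~1 applies with an exterior diagonal projector. This relies on the tests being local to their own blocks and on the blocks within a family being disjoint.
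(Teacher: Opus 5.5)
Your proof is correct and follows the paper's route: the conditional Bernoulli argument of Lemma~\ref{lem:bernoulli} within each disjoint family (your Step~1 plays the role of Proposition~\ref{prop:outside}), H\"older across the $C$ families, then $t\to\infty$, and finally the event $\{Y_n>0\}$ for singularity. One cosmetic point: when $a=1$ the weight $(1-a)^{-X}$ is infinite, so it is cleaner to apply Markov at finite $t$, obtaining the bound $b(t)^{m/C}$, and only then let $t\to\infty$.
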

\begin{proof}
The conditional Bernoulli argument of Lemma~\ref{lem:bernoulli} applies within each family. H\"older across $C$ families gives the analogue of \eqref{eq:globalmgf}. Letting $t\to\infty$ yields the bound. The event $\{Y_n>0\}$ gives the final assertion.
\end{proof}

In particular, noise sensitivity suffices for the small-cardinality part of this criterion when the empty Fourier mass vanishes. For each fixed $k$ and fixed $0<\rho<1$, noise sensitivity implies
\[
 \rho^k\Prob(0<|\cS_{f_n}|\le k)
 \le\sum_{A\ne\varnothing}\rho^{|A|}\widehat f_n(A)^2\longrightarrow0.
\]
The sharper percolation lower-tail estimate is needed only for the decay exponent in Theorem~\ref{thm:rate}. The Fourier interpretation of noise sensitivity originates in \cite{BKS}; the quantitative percolation estimates used here are from \cite{GPS}.

\subsection{Product measures and general quads}\label{sec:extensions}

The local estimate persists for nonuniform product measures. Let $\eta_i\in\{0,1\}$ be independent with parameters $0<p_i<1$, and use the orthonormal basis
\[
 \psi_A=\prod_{i\in A}\frac{\eta_i-p_i}{\sqrt{p_i(1-p_i)}}.
\]
Spectral probabilities are the squared coefficients in this basis, for functions of unit $L^2$ norm.

\begin{proposition}\label{prop:biased}
Suppose every tested ring has its six parameters in $[\delta,1-\delta]$, where $0<\delta\le1/2$. For a site-connection sign function, Proposition~\ref{prop:outside}, Theorem~\ref{thm:finite}, and Lemma~\ref{lem:fibers} hold for the product spectral law with $\astar$ replaced by
\[
 a_\delta=\astar\left(\frac{\delta}{1-\delta}\right)^6.
\]
When the connection function is nonconstant, they also hold for its centered, variance-normalized version, whose spectral law is the original spectral law conditioned to be nonempty.
\end{proposition}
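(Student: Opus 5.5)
The plan is to prove a biased analogue of Proposition~\ref{prop:outside}. After that, Theorem~\ref{thm:finite} and Lemma~\ref{lem:fibers} follow word for word, since their proofs use only the conditional inequality \eqref{eq:conditional}, disjointness of wheels, and the seven-coloring.

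Fix a tested center $v$ with ring parameters $p_0,\dots,p_5\in[\delta,1-\delta]$. Two facts carry over from the fair case. First, the biased derivative $\partial_v f=\E_v[\psi_v f]$ is proportional to $f(1,\cdot)-f(0,\cdot)$. By Lemma~\ref{lem:zero}, this difference vanishes whenever the ring configuration lies in $L$, and that is a purely combinatorial fact. Second, any exterior linear operator $A$ preserves this zero fiberwise, exactly as in the proof of Proposition~\ref{prop:outside}. So it suffices to prove a biased six-cycle inequality: every $g$ on $\{0,1\}^6$ vanishing on $L$ satisfies $\|(\Id-Q^{(p)})g\|^2\ge a_\delta\|g\|^2$. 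Here the norms are in $L^2$ of the product measure $\pi_p$ with parameters $p_i$, and $Q^{(p)}$ is the $\pi_p$-orthogonal projection onto $\operatorname{span}\{\psi_A:A\in\cI\}$.

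This span equals the span of the monomials $y_A$, $A\in\cI$, because $\cI$ is closed under subsets. In particular it does not depend on $p$, and neither do the support condition or Lemma~\ref{lem:rank}. Only the inner product changes, and that is where the main difficulty lies. I would handle it by comparing densities. Let $u$ denote the uniform measure on $\{0,1\}^6$. The density $\pi_p/u=64\prod_i p_i^{z_i}(1-p_i)^{1-z_i}$ lies between $64\delta^6$ and $64(1-\delta)^6$. The fair proof in Proposition~\ref{prop:coercivity} rests on the estimate $\|\one_L q\|_u^2\ge\astar\|q\|_u^2$ for $q\in W:=\operatorname{span}\{y_A\}$. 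Transferring it to $\pi_p$ gives
\[
\|\one_Lq\|_{\pi}^2\ge 64\delta^6\|\one_Lq\|_u^2\ge64\delta^6\astar\|q\|_u^2\ge\Bigl(\tfrac{\delta}{1-\delta}\Bigr)^6\astar\|q\|_\pi^2 .
\]
Consequently $\|\one_Hq\|_\pi^2\le(1-a_\delta)\|q\|_\pi^2$ on $W$. The Cauchy--Schwarz and Pythagoras step from Proposition~\ref{prop:coercivity}, now carried out in $L^2(\pi_p)$ with $q=Q^{(p)}g$, gives the biased coercivity with constant $a_\delta$. The remaining coordinates, including the center, are integrated as before.

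To return to \eqref{eq:conditional}, I would use that $P_v$ and $\Id-Q_v$ remain commuting diagonal projections in the $\psi$ basis. The center factor is $\psi_v\partial_vAf$ with $\|\psi_v\|=1$. Taking $A$ to be the diagonal projector onto exterior supports satisfying $\mathcal H$, Parseval in the $\psi$ basis identifies the norms with spectral probabilities. The rest of the section then applies verbatim with $\astar$ replaced by $a_\delta$.

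For the centered version, the key observation is that $\tilde f=(f-\E f)/\sqrt{\Var f}$ equals $Af$ for the operator $A=(\Id-E_{\rm all})/\sqrt{\Var f}$. Removing the constant is a diagonal projection that acts trivially on the wheel. Indeed, it removes only the empty support, which lies outside every $T_v$ and $P_v$, so it can be regarded as an exterior diagonal operator. Hence the inequality holds for $\tilde f$, and the fiber and Bernoulli arguments, which used only this inequality for all exterior $A$, apply. Its spectral law is $\widehat f(S)^2/\Var f$ on $S\ne\varnothing$, which is the conditioned law. The only real checking needed is that Lemmas~\ref{lem:bernoulli} and~\ref{lem:fibers} never invoke the Boolean or fair structure beyond \eqref{eq:conditional}. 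I expect this to be routine.
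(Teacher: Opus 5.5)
Your proposal is correct and is essentially the paper's proof. The density bounds $64\delta^6\le d\pi_p/du\le 64(1-\delta)^6$ transfer the fair six-cycle coercivity to the biased ring with constant $(\delta/(1-\delta))^6\astar$; you transfer the Gram bound $\|\one_Lq\|^2\ge\astar\|q\|^2$ on the common span and redo the Cauchy--Schwarz step, whereas the paper compares $\dist_p(g,V)$ with $\dist_{1/2}(g,V)$ directly. The biased derivative still vanishes on $L$ under every exterior operator, and the centered version follows by homogeneity. One phrasing slip is harmless: the centering projection is not literally an operator on coordinates outside $W_v$, but $\partial_v$ annihilates functions independent of $x_v$, so $\partial_vA\tilde f=\partial_vAf/\sqrt{\Var f}$ for every exterior $A$ and the inequality transfers unchanged.
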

\begin{proof}
Because $\cI$ is downward closed, the span $V$ of $\{\psi_A:A\in\cI\}$ on a ring equals the span of its independent-set monomials. Thus $V$ is the same vector space as in Lemma~\ref{lem:rank}. The ring product measure has density, relative to the fair measure, between
\[
 m_p=2^6\prod_i\min(p_i,1-p_i),\qquad
 M_p=2^6\prod_i\max(p_i,1-p_i).
\]
For $g$ vanishing on $L$, norm comparison gives
\[
 \dist_p(g,V)^2
 \ge m_p\dist_{1/2}(g,V)^2
 \ge m_p\astar\|g\|_{1/2}^2
 \ge (m_p/M_p)\astar\|g\|_p^2
 \ge a_\delta\|g\|_p^2.
\]
The product derivative is
\[
 D_v^pf=\E_v[\psi_vf]
 =\sqrt{p_v(1-p_v)}\,[f(1,\cdot)-f(0,\cdot)].
\]
It vanishes on $L$ by Lemma~\ref{lem:zero}, and that identity survives every exterior operator. Since $P_vAf=\psi_vD_v^pAf$ and $\|\psi_v\|_p=1$, the preceding inequality proves the product version of Proposition~\ref{prop:outside}. All subsequent conditioning and coloring arguments are unchanged. Finally, subtracting the mean changes only the empty Fourier coefficient, and division by the standard deviation normalizes the remaining squared coefficients. The derivative inequalities are homogeneous, so the same constants apply after this normalization.
\end{proof}

Uniformly bounded product parameters alone do not imply escape of the spectral size or negligible boundary mass. Proposition~\ref{prop:biased} is a finite-volume result. For the uncentered Boolean sign function, the fair one-point identity becomes
\[
 \Prob_p(v\in\cS_f)=4p_v(1-p_v)\Prob_p(v\in\cP_f),
\]
as follows by taking the squared norm of $D_v^pf$. For its centered spectrum, divide the right-hand side by $\operatorname{Var}_p(f)$. Boundary estimates must use this normalization.

\begin{theorem}\label{thm:quads}
Let $Q$ be a fixed Jordan quad in the sense of \cite[Section~2.1]{GPS}, with its continuous hexagonal-tile crossing convention. Let $\mu_R$ and $\nu_R$ be the spectral and pivotal laws of the critical triangular-site crossing of $RQ$, and write
\[
 \mu_R^+=\mu_R(\,\cdot\mid S\ne\varnothing),\qquad
 \epsilon_R=\mu_R(\varnothing).
\]
Count only elementary faces consisting of full interior tiles. Then
\[
 \mu_R(K_R=0,\ S\ne\varnothing)\longrightarrow0,
 \qquad \TV(\mu_R^+,\nu_R)\longrightarrow1.
\]
The latter conclusion also holds with the pivotal law conditioned to be nonempty. Moreover, for every fixed $0<\delta<\astar/3$,
\[
 \mu_R^+\bigl(K_R(S)\ge\delta|S|\bigr)\longrightarrow1,
 \qquad
 1-\TV(\mu_R,\nu_R)
 =\min\{\epsilon_R,\nu_R(\varnothing)\}+o(1).
\]
\end{theorem}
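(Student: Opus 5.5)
The plan is to rerun the square-crossing argument of Theorems~\ref{thm:discrete} and~\ref{thm:lowdensity}, with the sharp GPS rates replaced by qualitative inputs that hold for any fixed Jordan quad.

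\textbf{Pivotal side.} Lemma~\ref{lem:forest} uses only the shortcut argument along a simple crossing path. It goes through for the crossing of $RQ$, with Hex duality for the complementary event between the other two boundary arcs. Hence $\nu_R(K_R>0)=0$ for faces made of full interior tiles.

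\textbf{Spectral side.} Fix $0\le\delta<\astar/3$ and let $V_R^\circ$ be the centers whose complete wheel lies inside $RQ$ at fixed lattice distance from the boundary. The fiber argument in the proof of Theorem~\ref{thm:lowdensity} is purely finite-volume and uses no arm estimates. It gives, for every $n$,
\[
 \mu_R\bigl(K_R\le\delta|S|,\ |S|=n,\ S\subseteq V_R^\circ\bigr)\le7e^{-\eta d_\theta n}\mu_R(1\le|S|\le7n).
\]
Summing over $n$ reduces the claim to two facts.
\begin{enumerate}[label=(\alph*)]
\item For each fixed $k$, $\mu_R(1\le|S|\le k)\to0$.
\item The collar mass $\mu_R(S\cap B_R\ne\varnothing)\to0$.
\end{enumerate}
Given these, choose $n_0$ with $7e^{-\eta d_\theta n_0}$ small. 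Split the sum into $n\le n_0$, which is controlled by (a), and $n>n_0$, which is bounded by the tail of $\sum_n 7e^{-\eta d_\theta n}$. This gives $\mu_R(K_R\le\delta|S|,\ S\ne\varnothing)\to0$, and $\delta=0$ is the triangle-free statement.

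For (a), use the noise-sensitivity remark after Proposition~\ref{prop:criterion}. Quad crossings are noise sensitive by \cite{BKS} (quantitatively by \cite{GPS}), so $\sum_{A\ne\varnothing}\rho^{|A|}\widehat f_R(A)^2\to0$, which implies (a).

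The hardest step is (b). The rectangle argument of Lemma~\ref{lem:boundary} used straight sides and corners. For a Jordan quad the boundary is rough. I would try the following route.
\begin{itemize}
\item Use \eqref{eq:biasedinfluence} to replace spectral by pivotal one-point probabilities.
\item Note that a collar site at lattice distance $O(1)$ from $\partial(RQ)$ that is pivotal has three disjoint arms to distance $cR$ inside the domain, away from the four marked points.
\item Sum over boundary-adjacent tiles: their number is $o(R^2)$ but possibly much more than $R$.
\end{itemize}
A per-site bound of $o(1)/\#\text{collar}$ is not available directly for fractal boundaries. Instead I would use the continuum scaling limit, as in \cite[Section~2.1]{GPS}. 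The expected number of pivotals within distance $\varepsilon R$ of $\partial Q$, normalized by $m(R)$, tends to $0$ as $\varepsilon\to0$ uniformly in $R$, by the pivotal-measure convergence and its lack of boundary mass. Then reallocate: write the full-wheel test set as centers at distance at least $\varepsilon R$, apply the fiber bound to $S$ restricted to them, and show that $|S\cap V_R^\circ|\to\infty$ in probability conditional on nonemptiness. The last point follows from (a)-type estimates on spectral sizes in subdomains, via \cite[Lemma~4.3]{GPS}-style conditioning. If this reallocation proves too delicate, I would at least restrict to quads with rectifiable boundary, where the half-plane three-arm bound $(r/t)^{\beta}$ with $\beta>1$ from Lemma~\ref{lem:reflection} sums to $o(1)$ as in Proposition~\ref{prop:occupancy}.

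\textbf{Total-variation conclusions.} With $\mu_R^+(K_R\ge\delta|S|)\to1$ and $\nu_R(K_R>0)=0$, the event $\{K_R>0\}$ gives $\TV(\mu_R^+,\nu_R)\to1$. It gives the same for $\nu_R$ conditioned to be nonempty.

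For the overlap formula, split \eqref{eq:overlap} into the empty atom, which contributes $\min\{\epsilon_R,\nu_R(\varnothing)\}$, and the nonempty part. The nonempty part is at most $\mu_R(S\ne\varnothing,K_R=0)\to0$, since every nonempty atom with $\nu_R>0$ is face-free.
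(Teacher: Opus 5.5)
Your architecture matches the paper's: Lemma~\ref{lem:forest} for pivotals, a finite-volume face bound on complete-wheel centers, escape of small nonempty supports (your step (a), which is fine), and negligible spectral mass off the tested centers (your step (b)). Your fiber version of Theorem~\ref{thm:lowdensity} works as well as the paper's direct use of \eqref{eq:noface}--\eqref{eq:density}.

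The gap is (b). This is exactly where a rough Jordan quad differs from the square, and your proposal does not establish it.

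\begin{itemize}
\item A per-site arm sum in the style of Lemma~\ref{lem:boundary} cannot work without boundary regularity. A Jordan curve can have positive area, so even your count $o(R^2)$ for the collar tiles can fail.
\item Your substitute is not an argument. Convergence in law of a renormalized pivotal measure, with no boundary mass in the limit, gives no bound on $\E|\cP_R\cap N_{\varepsilon R}|/m(R)$ uniform in $R$. Here $N_{\varepsilon R}$ is the $\varepsilon R$-neighbourhood of $\partial(RQ)$, and a uniform bound would need uniform integrability near the boundary, which is exactly the missing boundary arm estimate. Nor does it control $\mu_R(S\cap B_R\neq\varnothing)$ for a fixed-width collar: a small normalized expectation does not keep $S$ away from the collar.
\item The ``reallocation'' via $|S\cap V_R^\circ|\to\infty$ is left unspecified. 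It would not suffice for the density assertion anyway: to pass from $K_R\ge\frac{\theta}{3}X_R$ to $K_R\ge\delta|S|$ you need $X_R\ge(3\delta/\theta)|S|$ with high probability. That again requires control of the spectral mass outside the tested centers.
\item Retreating to rectifiable boundaries would not prove the stated theorem. Even there, Lemma~\ref{lem:reflection} concerns straight half-planes only.
\end{itemize}

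The missing input is a macroscopic interior-concentration statement for the spectral sample, not a local arm sum. For every $\eta>0$ there should be a fixed $U'\Subset Q^\circ$ with $\limsup_{R\to\infty}\mu_R(S\not\subset RU')\le\eta$.

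By \eqref{eq:blockvariance}, $\mu_R(S\not\subset RU')$ equals the minimal squared $L^2$ error in approximating the crossing sign by a function of the configuration in $RU'$. This is why no regularity of $\partial Q$ enters.

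The paper takes this statement from equations (7.7)--(7.8) in the proof of \cite[Theorem~7.4]{GPS}. Since $RU'\subseteq V_R^\circ$ for large $R$, letting $\eta\to0$ gives $\mu_R(S\not\subset V_R^\circ)\to0$. Then \eqref{eq:noface}, together with the lower-tail tightness of \cite[Theorem~7.4]{GPS}, finishes the proof as in your outline.

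You also divide by $\mu_R(S\neq\varnothing)$ without comment when passing to $\mu_R^+$. You need $\mu_R(S\neq\varnothing)\ge c_Q>0$, which the paper obtains from \cite[Lemma~3.1]{GPS} and Paley--Zygmund.
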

\begin{proof}
By \cite[Theorem~7.4]{GPS}, nonzero spectral size is tight above and away from zero on the scale $R^2\alpha_4(R)$, which tends to infinity. Thus, for each fixed $m$,
\[
 \mu_R(0<|S|<m)\longrightarrow0.
\]
Equations~(7.7)--(7.8) in the proof of that theorem show that, for every $\eta>0$, there is a fixed $U'\Subset Q^\circ$ with
\[
 \limsup_{R\to\infty}\mu_R(S\not\subset RU')\le\eta.
\]
For large $R$, all sites in $RU'$ are tested centers. Consequently, for the full-wheel center set $V_R^\circ$,
\[
 \mu_R(S\not\subset V_R^\circ)\longrightarrow0.
\]
This interior approximation avoids any regularity assumption on the Jordan boundary. The face-free estimate now gives
\[
 \mu_R(K_R=0,\ S\ne\varnothing)
 \le\mu_R(S\not\subset V_R^\circ)
   +\mu_R(0<|S|<m)+(1-\astar)^{m/7}.
\]
Send $R\to\infty$, then $m\to\infty$.

For a fixed nonempty bulk set, the first- and second-moment estimates in \cite[Lemma~3.1]{GPS}, followed by Paley--Zygmund, give $\mu_R(S\ne\varnothing)\ge c_Q>0$. The preceding exceptional probabilities therefore still vanish under $\mu_R^+$. In particular, $X_R\to\infty$ and $X_R=|S|$ with probability tending to one under this law. Apply Theorem~\ref{thm:finite} with $\theta=3\delta$ to the centered normalized crossing function to obtain the face-density assertion. Lemma~\ref{lem:forest} holds for this quad and also after any defined conditioning of its pivotal law. It proves both singularity assertions. Finally, the nonempty overlap of $\mu_R$ and $\nu_R$ is at most $\mu_R(K_R=0,S\ne\varnothing)=o(1)$, whereas their empty overlap is exactly the displayed minimum.
\end{proof}

The empty-atom term need not vanish for a general quad. No quantitative rate uniform over quads is asserted in Theorem~\ref{thm:quads}.

\section{Separation under perturbations}\label{sec:perturbations}

The face statistic also gives quantitative separation in a metric on sets. For subsets of the same finite coordinate set, put
\[
 d_\triangle(A,B)=|A\mathbin\triangle B|,\qquad
 \mathcal F_R=\{B\subseteq V_R:K_R(B)=0\},\qquad
 d_\triangle(A,\mathcal F_R)=\min_{B\in\mathcal F_R}d_\triangle(A,B).
\]
This distance permits both insertions and deletions. Its minimum over $\mathcal F_R$ can always be attained by deletions alone: if $B$ is face-free, then $A\cap B$ is face-free and $d_\triangle(A,A\cap B)\le d_\triangle(A,B)$.

\begin{theorem}[Distance from face-free sets]\label{thm:distance}
For every finite connection function covered by Theorem~\ref{thm:finite}, using the corresponding domain's complete-wheel and interior-face conventions, every $m>0$, and $0\le\theta<\astar$,
\begin{equation}\label{eq:distance_tail}
 \Prob\left(d_\triangle(\cS_f,\mathcal F_R)\le\frac{\theta}{18}X_R,\ X_R\ge m\right)
 \le\exp\left\{-\frac m7D(\theta\Vert\astar)\right\}.
\end{equation}
Moreover,
\begin{equation}\label{eq:distance_mean}
 \E d_\triangle(\cS_f,\mathcal F_R)\ge\frac{\astar}{18}\E X_R.
\end{equation}
For critical triangular-site square crossings, if
\[
 W_1^\triangle(\mu_R,\nu_R)
 =\inf_{\pi\in\Pi(\mu_R,\nu_R)}\E_\pi|S\mathbin\triangle P|,
\]
where $\Pi$ denotes all couplings, then
\begin{equation}\label{eq:transport}
 W_1^\triangle(\mu_R,\nu_R)\asymp m(R).
\end{equation}
Uniformly over these couplings,
\begin{equation}\label{eq:coupling_distance}
 \Prob_\pi\left(|S\mathbin\triangle P|\ge\frac{\astar}{36}|S|\right)\longrightarrow1.
\end{equation}
\end{theorem}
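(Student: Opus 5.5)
The plan is to reduce everything to the face-density bound \eqref{eq:density} by a deterministic comparison between $d_\triangle(A,\mathcal F_R)$ and the face count. As noted before the theorem, the minimum over $\mathcal F_R$ is attained by deletions. Let $A$ be a set whose complete interior wheels have $Y(A)$ successful centers, meaning centers in $A$ with an incident interior face contained in $A$. Deleting a site destroys at most six faces. Each such face is incident to at most three successful centers, so one deletion can remove success from at most $18$ centers. Thus a face-free subset requires at least $Y(A)/18$ deletions, i.e.
\[
 d_\triangle(A,\mathcal F_R)\ge Y(A)/18 .
\]
With $Y$ the total success count from the proof of Theorem~\ref{thm:finite}, the event in \eqref{eq:distance_tail} is therefore contained in $\{Y\le\theta X_R,\ X_R\ge m\}$. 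The H\"older/Markov argument there, which bounds exactly this event before \eqref{eq:Yfaces} is used, gives \eqref{eq:distance_tail}. For \eqref{eq:distance_mean}, I would use the mean form of Lemma~\ref{lem:bernoulli}: conditional on the center pattern, $\E[Y\mid x]\ge\astar X$ in each color class. Summing over the seven classes gives $\E Y\ge\astar\E X_R$, and hence $\E d_\triangle\ge\astar\E X_R/18$.

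For the transport estimate \eqref{eq:transport} I plan two bounds. For the upper bound, I would take the independent coupling, so that
\[
 \E|S\triangle P|\le\E|S|+\E|P|=J+I\asymp m(R)
\]
by Lemma~\ref{lem:bulkmoments}. For the lower bound, Lemma~\ref{lem:forest} puts every pivotal realization in $\mathcal F_R$ after restricting to interior faces. So under any coupling $|S\triangle P|\ge d_\triangle(S,\mathcal F_R)$, and \eqref{eq:distance_mean} gives the lower bound $\astar\E X_R/18$. It then remains to show $\E X_R\ge c\,m(R)$. This follows from \eqref{eq:biasedinfluence} together with the one-point bulk bound \eqref{eq:bulkone} summed over order $R^2$ bulk sites, all of which lie in $V_R^\circ$.

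For \eqref{eq:coupling_distance}, take $\theta=\astar/2$. On the event $\{X_R\ge m\}\cap\{|S\setminus V_R^\circ|=0\}$ we have $X_R=|S|$. Outside the exceptional event of \eqref{eq:distance_tail} we also have
\[
 |S\triangle P|\ge d_\triangle(S,\mathcal F_R)>\astar X_R/36 .
\]
The remaining probabilities do not involve the coupling, so they are uniform over $\pi$. The first is $\Prob(X_R<m)$, which tends to zero for fixed $m$ because $X_R\to\infty$ in probability (Section~\ref{sec:critical}). The second is the collar probability, which is $O(R^{-1})$ by Lemma~\ref{lem:boundary}. The third is the tail bound $e^{-mD/7}$. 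Letting $R\to\infty$ and then $m\to\infty$ finishes the argument.

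I expect the main obstacle to be the bookkeeping in the deletion-counting constant $18$. One must check that "successful center" in the success count uses only full interior faces, consistent with $\mathcal F_R$, so that a face-free set has zero successes. One must also check that the boundary restriction of faces does not break the face-to-center incidence count. If the constant works out differently, only the numerical factor changes.
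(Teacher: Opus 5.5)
Your proposal is correct and follows essentially the paper's proof. The paper reaches the same constant $18$ through $d_\triangle(A,\mathcal F_R)\ge K_R(A)/6$ and $Y\le 3K_R$ rather than by charging successful centers directly; otherwise it also applies the face-density bound of Theorem~\ref{thm:finite}, obtains $\E Y\ge\astar\E X_R$ from Proposition~\ref{prop:outside} with the identity operator, and handles transport and couplings via $|S\mathbin\triangle P|\le|S|+|P|$, the face-free pivotal support, and spectral-marginal events, just as you do.
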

\begin{proof}
Every elementary face in $A$ must lose a vertex when $A$ is changed into a face-free set. A site belongs to at most six elementary faces, hence
\begin{equation}\label{eq:delete_faces}
 d_\triangle(A,\mathcal F_R)\ge K_R(A)/6.
\end{equation}
The event in \eqref{eq:distance_tail} therefore implies $K_R(\cS_f)\le\theta X_R/3$, so Theorem~\ref{thm:finite} applies.

Let $Y_R$ count centers whose spectral supports contain an incident face. Proposition~\ref{prop:outside}, with the exterior operator equal to the identity, gives $\E Y_R\ge\astar\E X_R$. Since $Y_R\le3K_R(\cS_f)$, \eqref{eq:delete_faces} proves \eqref{eq:distance_mean}.

For a critical square, Lemma~\ref{lem:boundary} and the bulk influence comparison give $\E X_R\asymp m(R)$. Every pivotal realization belongs to $\mathcal F_R$, so every coupling has mean distance at least the left side of \eqref{eq:distance_mean}. Conversely, under any coupling, $|S\mathbin\triangle P|\le|S|+|P|$, and both expectations are comparable to $m(R)$. This proves \eqref{eq:transport}. Finally apply \eqref{eq:distance_tail} with $\theta=\astar/2$. The facts $X_R\to\infty$ in probability and $\Prob(X_R=|\cS_R|)\to1$ follow from Section~\ref{sec:critical}. The resulting exceptional event depends only on the spectral marginal, so its probability bound is uniform over all couplings.
\end{proof}

The same argument applies to the spectral law conditioned to be nonempty in a fixed Jordan quad, using Theorem~\ref{thm:quads}. Its conclusion is the analogue of \eqref{eq:coupling_distance} for either the unconditional or the nonempty-conditioned pivotal law. Thus local geometric separation cannot be removed by changing a vanishing fraction of the spectral sites. This is a statement about the number of changed sites; it does not assert a positive distance in a metric on continuum subsets.

\subsection{Stability under independent thinning}\label{sec:thinning}

Let $\cS_R^{(\rho)}$ be obtained by retaining each member of $\cS_R$ independently with probability $\rho\in(0,1]$. The retention variables are independent of the original spectral sample. Write $K_R^{(\rho)}=K_R(\cS_R^{(\rho)})$, while $X_R=|\cS_R\cap V_R^\circ|$ continues to count the original centers.

\begin{proposition}[Thinning bounds]\label{prop:thinning}
Put $a_\rho=\astar\rho^3$. For every fair site-connection function covered by Theorem~\ref{thm:finite}, every $m>0$, and $0\le\theta<a_\rho$,
\[
 \Prob\left(K_R^{(\rho)}\le\frac\theta3X_R,\ X_R\ge m\right)
 \le\exp\left\{-\frac m7D(\theta\Vert a_\rho)\right\}.
\]
In particular,
\begin{equation}\label{eq:thinning_noface}
 \Prob(K_R^{(\rho)}=0,\ X_R\ge m)\le(1-\astar\rho^3)^{m/7}.
\end{equation}
For critical triangular-site square crossings, put $m(R)=R^2\alpha_4(R)$ and let $\mu_R^{(\rho_R)}$ be the law of the thinned sample. If $\rho_R^3m(R)\to\infty$, then
\[
 \TV(\mu_R^{(\rho_R)},\lambda_R)\longrightarrow1
\]
for every sequence of laws $\lambda_R$ supported on sets with no full interior elementary face. This includes the original pivotal law and every independently thinned pivotal law. Conversely, if $\rho_R^3m(R)\to0$, then $\Prob(K_R^{(\rho_R)}>0)\to0$.
\end{proposition}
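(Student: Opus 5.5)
The plan is to rerun the conditional Bernoulli argument of Lemma~\ref{lem:bernoulli} with the thinning variables included. Enlarge the probability space by independent retention marks $\xi_u\in\{0,1\}$ with mean $\rho$, one for each site $u$, and set $\cS^{(\rho)}=\{u\in\cS:\xi_u=1\}$. At an occupied center $v_j$, Proposition~\ref{prop:outside} gives, conditionally on the exterior spectral data and on the marks outside $W_{v_j}$, that $\cS$ contains an incident face at $v_j$ with probability at least $\astar$. The marks inside $W_{v_j}$ are independent of everything else. On that success choose one incident face by a deterministic rule that depends only on $\cS\cap W_{v_j}$. All three of its vertices are retained with probability $\rho^3$. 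So the thinned success indicator $y_j^{(\rho)}$ has conditional success probability at least $\astar\rho^3$, given the center pattern and all preceding thinned outcomes. These conditioning events depend only on spectral coordinates and marks outside $W_{v_j}$, because wheels of one color are disjoint.

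Two checks are needed here. First, Proposition~\ref{prop:outside} conditions on events of $\cS\cap W_v^c$; adding independent marks does not change this, since one can first condition on the marks outside the wheel. Second, the retained face is a face of $\cS^{(\rho)}$, so $Y^{(\rho)}\le 3K_R^{(\rho)}$. With these in place, the seven-color Hölder step and the Chernoff optimization in the proof of Theorem~\ref{thm:finite} go through verbatim with $\astar$ replaced by $a_\rho$. This gives both displayed finite bounds, including \eqref{eq:thinning_noface}.

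For the critical square, suppose $\rho_R^3m(R)\to\infty$. By \eqref{eq:thinning_noface},
\[
 \Prob(K_R^{(\rho)}=0)\le\Prob(X_R<m)+\exp\{-\astar\rho_R^3 m/7\}.
\]
Choose $m=\epsilon\, m(R)$. The second term vanishes. For the first term we need $\Prob(X_R<\epsilon m(R))$ to be small once $\epsilon$ is small, uniformly in large $R$. This lower-tail tightness of $X_R/m(R)$ is the main obstacle. I would take it from the spectral lower-tail theorem of \cite{GPS} used in \eqref{eq:small}: there $\Prob(0<|\cS_R|\le\epsilon m(R))\le C\epsilon^{2/3+o(1)}$. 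We then add $\epsilon_R\to0$ and the collar bound of Lemma~\ref{lem:boundary}. This gives $\Prob(K_R^{(\rho)}>0)\to1$. Every face-free law $\lambda_R$ gives zero mass to $\{K_R>0\}$, so $\TV\to1$. The pivotal law and its thinnings are face-free by Lemma~\ref{lem:forest}, since subsets of face-free sets are face-free.

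For the converse, a first-moment bound suffices:
\[
 \E K_R^{(\rho)}=\rho^3\E K_R(\cS_R)\le\rho^3\sum_{\text{faces}}\Prob(\text{face}\subseteq\cS_R).
\]
The face probability is bounded by the two-point inclusion probability of adjacent sites. By Lemma~\ref{lem:biasedpair} and \eqref{eq:bulktwo} this is $\le C\alpha_4(R)$ in the bulk. Near the boundary one uses the one-point boundary bounds of Lemma~\ref{lem:boundary}. Summing over $O(R^2)$ faces gives $\E K_R^{(\rho)}\le C\rho^3 m(R)\to0$, and Markov's inequality finishes.
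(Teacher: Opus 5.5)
Your proposal is correct and follows the paper's proof essentially step for step. It uses the same sequential conditional Bernoulli argument with success probability $\astar\rho^3$ (conditioning first on the exterior retention marks, then choosing an incident face by a fixed rule). It uses the same lower-tail, empty-atom and collar inputs for the critical square; your choice $m=\epsilon\,m(R)$ followed by $\epsilon\to0$ is equivalent to the paper's $m(R)/\sqrt{\rho_R^3m(R)}$. The converse is likewise a first-moment bound.

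The one step to tighten is that converse. The bound \eqref{eq:bulktwo} needs a fixed relative distance from the sides, and Lemma~\ref{lem:boundary} only treats a fixed-width collar, so faces at intermediate distance from the boundary are not covered by what you cite. The paper avoids pointwise face bounds altogether by using $K_R(S)\le 2|S|$ and $\E|\cS_R|=I(f_R)\le Cm(R)$.
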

\begin{proof}
Adjoin independent Bernoulli retention variables $(\xi_v)$ to the spectral probability space. For a tested center $v$, let $y_v^{(\rho)}$ indicate that some incident face is contained in $\cS_R^{(\rho)}$, and put $x_v=\one_{\{v\in\cS_R\}}$.

An event determined by the spectral support and retention variables outside $W_v$ can first be conditioned on the exterior retention variables. Proposition~\ref{prop:outside} then gives its original local success bound with constant $\astar$. Conditional on the entire spectral support, choose the first present incident face in a fixed ordering, when one exists. Its three retention variables are unconditioned and independent, so it survives with probability $\rho^3$. Consequently, for every such exterior event $H$,
\[
 \Prob(y_v^{(\rho)}=1,H)\ge\astar\rho^3\Prob(x_v=1,H).
\]
Here $y_v^{(\rho)}\le x_v$. Within a disjoint wheel family, all previously revealed thinned markers and all other original center conditions are exterior to the current wheel. Thus Lemma~\ref{lem:bernoulli} applies with $\astar\rho^3$ in place of $\astar$. The seven-color H\"older argument and $\sum_v y_v^{(\rho)}\le3K_R^{(\rho)}$ prove the finite bounds.

The spectral lower-tail theorem of \cite[Theorem~1.1]{GPS}, the vanishing empty atom, and Lemma~\ref{lem:boundary} give
\[
 \lim_{u\downarrow0}\limsup_{R\to\infty}\Prob(X_R\le u\,m(R))=0.
\]
If $h_R=\rho_R^3m(R)\to\infty$, use $m_R=m(R)/\sqrt{h_R}$ in \eqref{eq:thinning_noface}. Then
\[
 \Prob(K_R^{(\rho_R)}=0)
 \le\Prob(X_R<m_R)+\exp\{-\astar\sqrt{h_R}/7\}\longrightarrow0.
\]
The event containing a full interior face separates this law from every $\lambda_R$ in the statement.

Finally, every elementary face contains three sites, and every site lies in at most six elementary faces. Hence $K_R(S)\le2|S|$. Independent thinning and the influence estimate give
\[
 \Prob(K_R^{(\rho_R)}>0)\le\E K_R^{(\rho_R)}
 =\rho_R^3\E K_R(\cS_R)\le2\rho_R^3\E|\cS_R|
 \le C\rho_R^3m(R).
\]
This proves the converse.
\end{proof}

Thus $m(R)^{-1/3}=R^{-1/4+o(1)}$ is the threshold scale for this triangular-face detector, in the two regimes above. The converse concerns this detector only and makes no assertion about total variation below that scale.

\subsection{Testing and relative entropy}\label{sec:testing}

The exact overlap exponent also controls statistical distinguishability after removal of the empty atom. Write
\[
 \mu_R^+=\mu_R(\,\cdot\mid S\ne\varnothing),\qquad
 \nu_R^+=\nu_R(\,\cdot\mid P\ne\varnothing).
\]
For laws $\lambda,\rho$ on a finite space, let
\[
 \operatorname{KL}(\lambda\Vert\rho)
 =\sum_A\lambda(A)\ln\frac{\lambda(A)}{\rho(A)},
\]
with value $+\infty$ if $\lambda$ charges a $\rho$-null atom.

\begin{corollary}[Nonempty overlap and information]\label{cor:testing}
For critical triangular-site square crossings,
\begin{align}
 1-\TV(\mu_R^+,\nu_R^+)&=R^{-11/12+o(1)},\label{eq:nonempty_overlap}\\
 \liminf_{R\to\infty}\frac{\operatorname{KL}(\nu_R^+\Vert\mu_R^+)}{\ln R}
 &\ge\frac{11}{12}.\label{eq:relative_entropy}
\end{align}
For all sufficiently large $R$, $\operatorname{KL}(\mu_R^+\Vert\nu_R^+)=+\infty$. With equal prior probabilities, the minimum error probability for distinguishing the two nonempty laws is $\frac12R^{-11/12+o(1)}$.
\end{corollary}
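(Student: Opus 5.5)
The plan is to reduce everything to the unconditioned overlap computed in Theorem~\ref{thm:TVrate}, together with the two empty masses. First, for $A\ne\varnothing$ we have $\mu_R^+(A)=\mu_R(A)/(1-\epsilon_R)$ and $\nu_R^+(A)=\nu_R(A)/(1-\nu_R(\varnothing))$. Here $\epsilon_R\to0$, and $\nu_R(\varnothing)$ is bounded away from $0$ and $1$. The lower bound holds by the two-strip RSW construction in the proof of Theorem~\ref{thm:TVrate}; the upper bound holds because $\nu_R(\cP_R\ne\varnothing)\ge a_0$, as in the proof of Theorem~\ref{thm:mesoscopic}. Both normalizing factors therefore lie in a fixed compact subinterval of $(0,\infty)$. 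Each minimum $\min\{\mu_R^+(A),\nu_R^+(A)\}$ is then comparable, up to fixed constants, to $\min\{\mu_R(A),\nu_R(A)\}$. Summing over nonempty $A$ gives $1-\TV(\mu_R^+,\nu_R^+)\asymp O_R$, where $O_R$ is the nonempty overlap. Theorem~\ref{thm:TVrate} identifies $O_R=R^{-11/12+o(1)}$, and fixed constants are absorbed into the $o(1)$ exponent. This proves \eqref{eq:nonempty_overlap}. The testing statement follows at once: with equal priors, the Bayes error is $\frac12\sum_A\min\{\mu_R^+(A),\nu_R^+(A)\}=\frac12(1-\TV)$.

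Infinite divergence in the direction $\operatorname{KL}(\mu_R^+\Vert\nu_R^+)$ will come from Theorem~\ref{thm:discrete}. For large $R$, $\mu_R(K_R>0)>0$, so some nonempty atom containing a full interior face has positive $\mu_R^+$-mass. By Lemma~\ref{lem:forest} that atom is $\nu_R$-null.

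For \eqref{eq:relative_entropy}, I would use the data-processing inequality applied to the indicator of a separating event, or equivalently the standard bound $\operatorname{KL}(\lambda\Vert\rho)\ge-\ln\bigl(\sum_A\sqrt{\lambda(A)\rho(A)}\bigr)^2$. The cleanest choice is to take the event $B_R=\{A:\nu_R^+(A)\ge\mu_R^+(A)\}$. Then $\nu_R^+(B_R^c)+\mu_R^+(B_R)=1-\TV=:\tau_R$. The binary data-processing bound gives
\[
\operatorname{KL}\ge D\bigl(\nu_R^+(B_R)\Vert\mu_R^+(B_R)\bigr)\ge(1-\tau_R)\ln\frac{1-\tau_R}{\tau_R}-\ln2 .
\]
The bound $\mu_R^+(B_R)\le\tau_R$ is what makes this lower estimate work. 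Since $\tau_R=R^{-11/12+o(1)}$, the right-hand side is $\frac{11}{12}\ln R\,(1+o(1))$, and the $\liminf$ follows.

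The only step needing care is the normalization comparison in the first paragraph. Specifically, we must ensure that $\nu_R(\varnothing)$ is bounded away from $1$ uniformly, so that conditioning $\nu_R$ cannot inflate the overlap by more than a constant factor. This bound comes from the positive-probability nonempty pivotal event already established, and otherwise everything is a direct consequence of Theorem~\ref{thm:TVrate}.
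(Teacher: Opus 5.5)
Your proposal is correct. The normalization step, the Bayes-error identity, and the infinite-divergence argument coincide with the paper's. Your positive lower bound on nonempty pivotal probability comes from the same first/second-moment argument that the paper invokes via Paley--Zygmund.

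The one genuine difference is the lower bound \eqref{eq:relative_entropy}. You project onto the Bayes-optimal set $B_R$ and apply binary data processing. Since $\nu_R^+(B_R)\ge1-\tau_R$, $\mu_R^+(B_R)\le\tau_R$, and binary entropy is at most $\ln2$, this gives $\operatorname{KL}(\nu_R^+\Vert\mu_R^+)\ge(1-\tau_R)\ln(1/\tau_R)-\ln2$. That is slightly better than the form you wrote. The paper instead uses the Bhattacharyya coefficient $a=\sum_A\sqrt{\lambda(A)\rho(A)}$. It bounds $a^2\le O(2-O)$ by Cauchy--Schwarz and uses Jensen to get $\operatorname{KL}\ge-2\ln a\ge-\ln\bigl(O(2-O)\bigr)$, which is the Bretagnolle--Huber inequality.

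The paper's bound is a closed formula in the overlap and loses only $\ln(2-O)$, with no multiplicative $(1-\tau_R)$ factor. Yours is more elementary and shows that the divergence is already witnessed by the single optimal test. Both give $(\frac{11}{12}+o(1))\ln R$, which is all that is needed.

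If you instead use the Bhattacharyya form you mention as ``equivalent'', you still need the Cauchy--Schwarz step linking $a$ to the overlap; the paper supplies it. Finally, the normalization only requires $\nu_R(\varnothing)$ to be bounded away from $1$; its lower bound plays no role here.
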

\begin{proof}
The nonempty spectral probability tends to one. The nonempty pivotal probability is bounded below: apply the bulk one- and two-point estimates to the number of pivotals in a fixed interior square and use Paley--Zygmund. Consequently both normalizing probabilities lie in $[c,1]$ for all sufficiently large $R$. Dividing the nonempty atoms by these probabilities changes their total overlap by at most a fixed factor. The nonempty overlap estimate in the proof of Theorem~\ref{thm:TVrate} gives \eqref{eq:nonempty_overlap}.

Here is the information inequality needed for \eqref{eq:relative_entropy}. Put $O=\sum_A\min\{\lambda(A),\rho(A)\}$ and $a=\sum_A\sqrt{\lambda(A)\rho(A)}$. Cauchy--Schwarz gives
\[
 a^2\le\left(\sum_A\min\{\lambda(A),\rho(A)\}\right)
 \left(\sum_A\max\{\lambda(A),\rho(A)\}\right)=O(2-O).
\]
When the relative entropy is finite, Jensen's inequality under $\lambda$ gives $\operatorname{KL}(\lambda\Vert\rho)\ge-2\ln a$; the same lower bound is automatic when it is infinite. Hence
\[
 \operatorname{KL}(\lambda\Vert\rho)\ge-\ln\bigl(O(2-O)\bigr).
\]
Apply this to $\lambda=\nu_R^+$ and $\rho=\mu_R^+$ and use \eqref{eq:nonempty_overlap}. In the reverse direction, the face event has positive spectral probability and zero pivotal probability, proving infinite relative entropy. Finally, the pointwise optimal test chooses the more likely atom; summing its error gives $O/2$.
\end{proof}

The face test alone has error of order $R^{-1/2+o(1)}$ under equal nonempty priors, by Theorem~\ref{thm:rate}. The optimal test achieves the smaller overlap scale in \eqref{eq:nonempty_overlap}; no efficient implementation of that optimal test is asserted.

\section{Scope and remaining questions}\label{sec:scope}

The two questions in \cite[Section~11]{GPS} require different inputs. The entropy argument uses spatial refinement, arm bounds, and local guards. The singularity argument additionally uses a local pattern forbidden for pivotal sets and coercive for the spectrum. The main conclusions, including their normalizations, can be summarized as follows.

\begin{center}
\small
\begin{tabularx}{\textwidth}{@{}>{\raggedright\arraybackslash}p{.285\textwidth}>{\raggedright\arraybackslash}p{.315\textwidth}>{\raggedright\arraybackslash}X@{}}
\hline
Setting & Entropy & Geometric separation\\
\hline
Critical square-bond rectangles & Both set entropies $\asymp m(R)$; matching nonroot resolution bounds & No bond singularity conclusion here\\[5pt]
Triangular-site sandwich measures, $R\le L_\varepsilon(p_*)$ & The same comparisons, uniform over parameter profiles & Finite face bounds with $a_\delta$; no asymptotic assertion without size and boundary inputs\\[5pt]
Critical triangular-site squares & Both entropies $\asymp m(R)$ & Discrete singularity, sharp rare-event and overlap rates, transport and thinning bounds\\[5pt]
Critical triangular-site Jordan quads & No new entropy assertion for arbitrary rough quads & Conditioned discrete singularity; explicit common-empty-atom correction\\
\hline
\end{tabularx}
\end{center}

Two limitations are structural. First, the triangular-face certificate does not apply directly to square-lattice bonds. A bond analogue would need a forbidden local support pattern and a corresponding local Fourier coercivity estimate, or a different separating statistic. Second, all separators used here are microscopic. They do not themselves distinguish the continuum laws: even measures whose discrete total variation equals one can have identical weak limits. For example, $\delta_{1/n}$ and $\delta_{-1/n}$ converge to the same point mass. Thus the continuum alternative in GPS Question~5 remains a separate problem.

The thinning result identifies the transition scale for the face detector, but does not determine whether the full laws remain separated below that scale. Likewise, the general-quad argument provides qualitative separation after conditioning; it does not supply rates uniform over Jordan boundaries. The near-critical finite coercivity bound isolates the additional inputs required for an asymptotic extension: nonempty spectral size must escape bounded sets, and the untested boundary region must have negligible spectral mass under the chosen normalization.

The arguments do not establish the unrestricted Fourier entropy--influence conjecture or the other open questions in \cite[Section~11]{GPS}.

\section*{Funding}
The author acknowledges support from the European Research Council under the European Union's Horizon Europe programme (grant 101041711), the Simons Foundation, Heights Labs, Convex Nexus Capital, and Israel Science Foundation grants 2258/19 and 4101/25.

\section*{Use of artificial intelligence}
During 2026, OpenAI Codex and GPT-6 were used extensively for mathematical development, literature searches, proof review, and preparation of this manuscript. The author directed the work and is responsible for the mathematical content and final version. No artificial-intelligence system is listed as an author.

\end{document}